\newcommand\reallywidehat[1]{%
\savestack{\tmpbox}{\stretchto{%
  \scaleto{%
    \scalerel*[\widthof{\ensuremath{#1}}]{\kern-.6pt\bigwedge\kern-.6pt}%
    {\rule[-\textheight/2]{1ex}{\textheight}}
  }{\textheight}%
}{0.5ex}}%
\stackon[1pt]{#1}{\tmpbox}%
}
\definecolor{myred}{rgb}{0.75,0,0}
\definecolor{mygreen}{rgb}{0,0.5,0}
\definecolor{myblue}{rgb}{0,0,0.65}
\theoremstyle{plain}
\newtheorem{theorem}[subsubsection]{Theorem}
\newtheorem{proposition}[subsubsection]{Proposition}
\newtheorem{lemma}[subsubsection]{Lemma}
\newtheorem{corollary}[subsubsection]{Corollary}
\theoremstyle{definition}
\newtheorem{definition}[subsubsection]{Definition}
\newtheorem{remark}[subsubsection]{Remark}
\newtheorem*{claim*}{Claim}
\theoremstyle{remark}
\numberwithin{equation}{section}
\newcommand\nc{\newcommand}
\nc\on{\operatorname}
\nc\renc{\renewcommand}
\DeclareMathOperator\rk{rk}
\DeclareMathOperator\id{id}
\DeclareMathOperator\Hom{Hom}
\DeclareMathOperator\GL{GL}
\DeclareMathOperator\SL{SL}
\DeclareMathOperator\gr{gr}
\DeclareMathOperator\sgn{sgn}
\newcommand{\defeq}{\vcentcolon=}
\title{Finite braid group orbits on $\operatorname{SL}_2$-character varieties}
\author{Yeuk Hay Joshua Lam, Aaron Landesman, and Daniel Litt}
\date{\today}
\begin{document}

\begin{abstract}
Let $\Sigma_{0,n}$ be a $2$-sphere with $n$ punctures. We classify all conjugacy
classes of Zariski-dense representations $\rho: \pi_1(\Sigma_{0,n})\to
\operatorname{SL}_2(\mathbb{C})$ with finite orbit under the mapping class group
$\operatorname{Mod}_{0,n}$, such that the local monodromy at one or more punctures has infinite order. We show that all such representations are ``of pullback type" or arise via middle convolution from finite complex reflection groups. In particular, we classify all rank $2$ local systems of geometric origin on $\mathbb{P}^1\setminus D$, with $D$ generic, and with local monodromy of infinite order about at least one point of $D$.
\end{abstract}

\maketitle

\section{Introduction}\label{section:introduction}
Fix $n>0$ 
and let $(A_1, \cdots, A_n)\in \on{SL}_2(\mathbb{C})^n$ be an $n$-tuple of
matrices such that $$\prod_{i=1}^n A_i=\on{Id}.$$ We say such an $n$-tuple is
\emph{non-degenerate} if the $A_i$ generate a Zariski-dense subgroup of
$\on{SL}_2(\mathbb{C})$. We take $X_n$ to be the set of such non-degenerate
$n$-tuples, up to simultaneous conjugation. That is, $$X_n:=\{(A_1, \cdots, A_n)\in \on{SL}_2(\mathbb{C})^n\mid \prod A_i=\on{Id} \text{ and $(A_1, \cdots, A_n)$ is non-degenerate}\}/\on{SL}_2(\mathbb{C}).$$ The set $X_n$ admits the action of a natural group of symmetries $\on{Mod}_{0,n}$, generated by $\sigma_1, \cdots, \sigma_{n-1}$, $$\sigma_i: (A_1, A_2, \cdots, A_n)\mapsto (A_1, \cdots, A_iA_{i+1}A_i^{-1}, A_{i}, \cdots A_n),$$ called the \emph{Hurwitz action}. There has been some interest over the last decades in understanding the dynamics of the $\on{Mod}_{0,n}$-action on $X_n$, as we explain in \autoref{subsection:previous-work}. Our main result is a complete classification of the finite orbits of the $\on{Mod}_{0,n}$-action on $X_n$, at least when one of the $A_i$ has infinite order.

The dynamical system described above arises naturally from the study of the character variety of $\Sigma_{0,n}$, the $2$-sphere with $n$ points removed.  The fundamental group $\pi_1(\Sigma_{0,n})$ has the presentation $$\pi_1(\Sigma_{0,n})=\langle \gamma_1, \cdots, \gamma_n \mid \prod \gamma_i=\on{id}\rangle,$$ with $\gamma_i$ a loop around the $i$-th puncture. $X_n$ is the set of complex points of the character variety parametrizing $2$-dimensional Zariski-dense $\pi_1(\Sigma_{0,n})$-representations with trivial determinant, i.e.~the categorical quotient $$X_n:=\on{Hom}(\pi_1(\Sigma_{0,n}), \on{SL}_2(\mathbb{C}))^{\text{non-deg}}\sslash \on{SL}_2(\mathbb{C}),$$ 
where the decoration ``non-deg" indicates the image of the representation is Zariski-dense. Explicitly, given a Zariski-dense representation $\rho: \pi_1(\Sigma_{0,n})\to \on{SL}_2(\mathbb{C})$, we set $A_i=\rho(\gamma_i)$.


The group $\on{Mod}_{0,n}$ is precisely the mapping class group $$\on{Mod}_{0,n}:=\pi_0(\on{Homeo}^+(\Sigma_{0,n})),$$ 
of $\Sigma_{0,n}$, acting on $X_n$ through its natural outer action on $\pi_1(\Sigma_{0,n})$. ($\on{Mod}_{0,n}$ is sometimes referred to as ``the spherical braid group on $n$ strands.")
We refer to conjugacy classes of representations with finite orbit under this action as \emph{MCG-finite}, or \emph{canonical}, representations of $\pi_1(\Sigma_{0,n})$.

\subsection{Main results}
Our main result is that such canonical representations with some $A_i$ of infinite order are of one of two types:
\begin{enumerate}
	\item The ``pullback" representations, classified by Diarra \cite[\S3-\S5]{diarra}, 
		and
	\item Representations obtained via \emph{middle convolution} from representations of finite complex reflection groups.
\end{enumerate}
See \autoref{cor:main-classification} for a precise statement. These two families are not disjoint---many ``pullback" representations also arise from middle convolution. As the finite complex reflection groups were classified by Shephard and Todd \cite{shephard-todd}, this amounts to a complete classification. We now explain the meaning of (1) and (2) above.

Note that for $n<3$, there are no Zariski-dense representations $\pi_1(\Sigma_{0,n})\to SL_2(\mathbb{C})$, as $\pi_1(\Sigma_{0,n})$ is abelian. For $n\geq 3$, let $\mathscr{M}_{0,n}$ denote the moduli scheme parametrizing genus zero curves with $n$ marked points, 
and let $\pi_n: \mathscr{M}_{0,n+1}\to \mathscr{M}_{0,n}$ be the map forgetting the last marked point. If $$\rho:\pi_1(\Sigma_{0,n})\to \on{SL}_2(\mathbb{C})$$ is an irreducible representation with finite mapping class group orbit, then there exists, 
by \cite[Corollary 2.3.5]{landesman2022canonical}, a dominant quasi-finite map $B\to \mathscr{M}_{0,n}$, and an $\SL_2$-local system $\mathbb{V}$ on $$\mathscr{C}^\circ:=\mathscr{M}_{0,n+1}\times_{\mathscr{M}_{0,n}} B,$$ with the following property: if $X$ is a fiber of the projection $\mathscr{C}^\circ\to B$, then $\mathbb{V}|_X$ has monodromy $\rho$. Thus it suffices to classify rank $2$ local systems on such $\mathscr{C}^\circ$.

By results of Corlette-Simpson \cite[Theorem 2]{corlette-simpson} (when $\mathbb{V}$ has quasi-unipotent monodromy at infinity, see \autoref{defn:quasiunipotent}) 
and Loray-Pereira-Touzet \cite[Theorem A]{loray-etc} (otherwise),
such local systems with Zariski-dense monodromy come in two flavors: those pulled back through a map $\mathscr{C}^\circ\to Y$, with $Y$ a Deligne-Mumford curve, and those pulled back from a so-called polydisk Shimura variety. The former are, by definition, the ``pullback" representations classified by Diarra \cite[\S3-\S5]{diarra}, so all that remains is to classify the latter---equivalently, those that underlie a rank $2$ integral variation of Hodge structure, or are equivalently (in rank $2$, see \autoref{rmk:geometric-origin}) of geometric origin, i.e.~arise in the cohomology of a family of smooth proper varieties over $\mathscr{C}^\circ$. A local system of geometric origin on a \emph{generic} $n$-times punctured curve of genus zero automatically spreads out over moduli space---thus it suffices to classify these. Here $(\mathbb{P}^1_{\mathbb{C}}, x_1, \cdots, x_n)$ is \emph{generic} if the corresponding map $\on{Spec}(\mathbb{C})\to \mathscr{M}_{0,n, \mathbb{Q}}$ factors through the generic point.

\begin{definition}
	An element $g\in \on{GL}_r(\mathbb{C})$ is a \emph{pseudoreflection} if
	$g-\on{Id}$ has rank $1$. A finite subgroup of $\on{GL}_r(\mathbb{C})$ is a \emph{finite complex reflection group} if it is generated by pseudoreflections.
\end{definition}

This is our main technical result:
\begin{theorem}\label{thm:main-thm-intro}
Let $D\subset \mathbb{P}^1$ be a generic reduced effective divisor of degree $n$, containing $\infty$, and let $\mathbb{V}$ be a local system of rank $2$ of geometric origin	on $X=\mathbb{P}^1\setminus D$ with non-scalar monodromy at $\infty$. Suppose that $\mathbb{V}$ has Zariski-dense monodromy in $\on{SL}_2(\mathbb{C})$ and at least one local monodromy matrix of $\mathbb{V}$ has infinite order. Then there exists a local system $\mathbb{U}$ on $X$ with monodromy group a finite complex reflection group, a rank one, finite order local system $\mathbb{M}$ on $X$, and a rank one, finite order local system $\chi$ on $\mathbb{G}_m$, such that $\mathbb{V}=MC_\chi(\mathbb{U})\otimes \mathbb{M}.$
\end{theorem}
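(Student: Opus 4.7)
My plan is to build $\mathbb{U}$ as an inverse middle convolution of $\mathbb{V}$ after a rank-one normalization. The first step is to choose rank one, finite order local systems $\mathbb{M}$ on $X$ and $\chi$ on $\mathbb{G}_m$, and to set $\mathbb{U} := MC_{\chi^{-1}}(\mathbb{V} \otimes \mathbb{M}^{-1})$. Appealing to the near-involutivity of middle convolution (Katz), one then obtains the desired identity $\mathbb{V} = MC_\chi(\mathbb{U}) \otimes \mathbb{M}$. The specific choice of $\chi$ is dictated by the local monodromy data of $\mathbb{V}$: using the explicit Dettweiler--Reiter formula for the local monodromy of a middle convolution, one can arrange that for an appropriate $\chi$, every local monodromy of $\mathbb{U}$ at a puncture is a pseudoreflection. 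The role of $\mathbb{M}$ is to align the eigenvalue data across punctures and to absorb the root-of-unity obstruction needed to make $\chi$ exist as a finite order character on $\mathbb{G}_m$. The infinite-order local monodromy hypothesis on $\mathbb{V}$ corresponds, after this normalization, to one of the local monodromies of $\mathbb{U}$ being a non-semisimple pseudoreflection (a unipotent Jordan block) rather than a semisimple one, which is allowable and in fact is used later. The non-scalar hypothesis at $\infty$ is what guarantees that the middle convolution has the expected rank and does not degenerate.

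Second, I would verify that the resulting $\mathbb{U}$ is of geometric origin. Middle convolution is known to preserve this property because, via Katz's construction, it is realized as the middle relative cohomology of a family of cyclic covers over $X$. In particular, applying $MC_{\chi^{-1}}$ to $\mathbb{V}\otimes \mathbb{M}^{-1}$ (which is of geometric origin since $\mathbb{M}$ is of finite order) yields another local system of geometric origin, hence one that underlies a polarizable integral VHS with the usual integrality consequences for its global monodromy.

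The main obstacle and technical heart of the proof is showing that the monodromy of $\mathbb{U}$ is finite. By construction $\mathbb{U}$ has pseudoreflection local monodromies at every puncture, so $\mathbb{U}$ falls under the Deligne--Mostow framework: such local systems have monodromy contained in a unitary group $U(p,q)$ whose Hermitian form -- in particular its signature -- is determined combinatorially by the eigenvalue data at the punctures. The plan is to show that the infinite-order local monodromy hypothesis on $\mathbb{V}$ (which becomes the non-semisimple pseudoreflection condition at the corresponding puncture of $\mathbb{U}$), combined with the integrality of $\mathbb{U}$ coming from its geometric origin, forces the Hermitian form to be definite, putting $\mathbb{U}$ in the compact Deligne--Mostow regime where the monodromy is finite. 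Since this finite monodromy is generated by pseudoreflections, it is by definition a finite complex reflection group, giving the desired conclusion. The delicate technical step is precisely this translation from the infinite-order hypothesis on $\mathbb{V}$ to a definiteness condition on the Hermitian form of $\mathbb{U}$, and ruling out the hyperbolic (ball quotient) Deligne--Mostow cases; I expect this will require explicit case analysis using the Shephard--Todd list to rule out borderline configurations.
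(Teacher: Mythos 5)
Your overall skeleton---normalize $\mathbb{V}$ by a rank-one twist, apply an inverse middle convolution $MC_{\chi^{-1}}$ so that the result $\mathbb{U}$ has pseudoreflection local monodromies, and then argue that integrality plus unitarity forces finite monodromy---does match the structure of the paper's argument (its \S4.1 ``Katz algorithm'' is exactly this normalization-plus-convolution step, and Proposition 4.3.4 confirms the pseudoreflection output).

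However, there is a genuine gap in what you call the ``main obstacle and technical heart.'' You assert that because $\mathbb{U}$ has pseudoreflection local monodromies it ``falls under the Deligne--Mostow framework,'' and that the signature of the invariant Hermitian form ``is determined combinatorially by the eigenvalue data at the punctures.'' This is false here: $\mathbb{U}$ is the middle convolution of a \emph{rank two} local system, not of a rank one (hypergeometric) local system, so the Hodge numbers of $\mathbb{U}$ are \emph{not} a combinatorial function of local monodromy data. In fact, the statement you are trying to prove is simply false for non-generic $D$: there are configurations of points and rank two local systems (e.g.\ arising from the non-discrete or ball-quotient end of the picture) for which the resulting middle convolution has indefinite polarization and infinite monodromy. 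Your proposal never invokes the genericity of $D$, which is exactly the hypothesis that rules these out, so the argument as stated cannot close.

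What the paper actually does at this step is a non-abelian Hodge-theoretic computation, not a Deligne--Mostow combinatorial one. It spreads $\mathbb{V}$ out to the versal family over $\mathscr{M}_{0,n}$, and uses an isomonodromy constraint (Proposition 2.3.1: at a \emph{general} fiber, the parabolic Hom-bundle between the Hodge line $F$ and $\mathscr{E}/F$ is forced to be $\mathscr{O}(-1)$) to pin down the Higgs bundle of $\mathbb{V}$. Feeding this into an explicit parabolic-degree computation (Proposition 4.2.1) shows that the Hodge filtration on $R^1\pi_* j_*(\pi_1^*\mathbb{V}\otimes\alpha^*\chi)$ collapses to length one, i.e.\ the polarization is definite, so the output of the algorithm is unitary. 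Applying this to every Galois conjugate (each of which is non-unitary because the infinite-order local monodromy matrix cannot be diagonalized), and combining unitarity over all embeddings with $\mathscr{O}_{K'}$-integrality, gives finiteness. The genericity of $D$ enters irreplaceably through the isomonodromy lemma, and the computation is a statement about global parabolic degrees rather than local eigenvalue combinatorics---this is the missing idea in your proposal. Your final appeal to ``explicit case analysis using the Shephard--Todd list'' also cannot substitute for this: Shephard--Todd is a list of groups that are \emph{already known to be finite}, so it cannot be used to establish finiteness in the first place.
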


Here $MC_\chi$ denotes Katz's middle convolution operation \cite{katz-rigid}, which will be described in \autoref{section:mc}. The proof, given in \autoref{section:intro-proofs}, relies on a Hodge-theoretic analysis of middle convolution. 
\begin{remark}
One may interpret the result as saying that $\mathbb{V}\otimes \mathbb{M}^{-1}$ appears in the cohomology of a family of curves over $X$, whose fiber over $x\in X$ is a $(\mathbb{Z}/r\mathbb{Z}\times G)$-cover of $\mathbb{P}^1$, branched over $\{x\}\cup D$, where $r$ is some positive integer and $G$ is a finite complex reflection group. The cases with $G$  a dihedral group are precisely those studied in \cite{lamlitt}, which gave us some hope that \autoref{thm:main-thm-intro} could be true.
\end{remark}
As we will see, \autoref{thm:main-thm-intro} gives quite fine control of the local systems $\mathbb{U}, \chi, \mathbb{M}$; for example, the dimension and local monodromies of $\mathbb{U}$ are given explicitly in \autoref{prop:katz-output}, and their monodromy representations may be computed algorithmically from that of $\mathbb{V}$ (and vice versa) \cite{dettweiler-reiter-painleve}.

\begin{remark} In fact we prove a slightly more general result, \autoref{thm:main-technical-thm}, where the condition that some local monodromy matrix have infinite order is relaxed in favor of requiring that $\mathbb{V}$ has no unitary Galois conjugates. Note that this latter condition is automatic if some local monodromy matrix has infinite order. Indeed, local systems of geometric origin always have quasi-unipotent local monodromy, i.e.~all eigenvalues are roots of unity. Thus if there is a local monodromy matrix of infinite order, it must not be diagonalizable, and hence cannot be unitary.

The condition that $\mathbb{V}$ has no unitary Galois conjugates has attracted some interest in the case of representations in $X_3$ with quasi-unipotent monodromy at infinity, see \cite{mcmullen-triangles, mcmullen-hilbert}.
\end{remark}
\begin{remark}
In the statement of \autoref{thm:main-thm-intro}, the condition of non-scalar monodromy  at infinity is just a notational convenience; as $\mathbb{V}$ has Zariski-dense monodromy, at least one of the punctures must have non-scalar monodromy. So this hypothesis can always be achieved by relabeling the points of $D$.
\end{remark}
\begin{remark}\label{rmk:geometric-origin}
	From the point of view of local systems of geometric origin, the
	condition that some $A_i$ have infinite order is natural. By
	Corlette-Simpson \cite[Theorem 2 and \S9]{corlette-simpson}, rank $2$ local
	systems of geometric origin with Zariski-dense monodromy always arise in the cohomology of a
	generically simple Abelian scheme ``of $\on{GL}_2$-type." The condition that some $A_i$ have infinite order is precisely the condition that this Abelian scheme  have some point of potentially totally degenerate reduction, i.e.~there is a point at which the abelian scheme admits, after a finite extension of the corresponding discretely valued field, a semistable model with toric special fiber.  
		
	In this way our main technical result, \autoref{thm:main-thm-intro},
	classifies all non-isotrivial $\on{GL}_2$-type Abelian schemes over $\mathbb{P}^1$ with:
	\begin{enumerate}
		\item at least one point of potentially totally degenerate reduction, and
		\item generic discriminant locus.
	\end{enumerate}
\end{remark}
The following classification result for MCG-finite representations is deduced in \autoref{section:intro-proofs}:
\begin{corollary}\label{cor:main-classification}
	Let $\rho: \pi_1(\Sigma_{0,n})\to SL_2(\mathbb{C})$ be a Zariski-dense MCG-finite representation with at least one local monodromy matrix of infinite order. Then either
	\begin{enumerate}
		\item $\rho$ is a ``pullback" representation (classified by Diarra \cite[\S3-\S5]{diarra}), or
		\item $\rho=MC_\chi(\gamma)\otimes \nu$, where $\nu:
			\pi_1(\Sigma_{0,n})\to \mathbb{C}^*$ and $\chi:
			\pi_1(\mathbb{G}_m)\to \mathbb{C}^*$ are finite order
			characters, and $\gamma: \pi_1(\Sigma_{0,n})\to \GL_r(\mathbb{C})$ is an irreducible representation with image a finite complex reflection group. 
	\end{enumerate}
\end{corollary}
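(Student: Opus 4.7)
The plan is to derive the corollary from \autoref{thm:main-thm-intro} together with the spreading-out and dichotomy results recalled in the introduction. First, given such a representation $\rho$, I would invoke \cite[Corollary 2.3.5]{landesman2022canonical} to produce a dominant quasi-finite $B \to \mathscr{M}_{0,n}$ and an $\SL_2$-local system $\mathbb{V}$ on $\mathscr{C}^\circ := \mathscr{M}_{0,n+1}\times_{\mathscr{M}_{0,n}} B$ whose restriction to a fiber of the projection $\mathscr{C}^\circ \to B$ has monodromy $\rho$. Zariski-density of $\rho$ ensures irreducibility, so this spreading-out result applies, and the problem reduces to classifying local systems on $\mathscr{C}^\circ$.

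Next, I would apply the Corlette-Simpson--Loray-Pereira-Touzet dichotomy already cited in the introduction: since $\mathbb{V}$ has Zariski-dense monodromy, either (a) $\mathbb{V}$ is pulled back through a map $\mathscr{C}^\circ \to Y$ to a Deligne-Mumford curve, or (b) $\mathbb{V}$ is pulled back from a polydisk Shimura variety, which in rank $2$ is equivalent to $\mathbb{V}|_X$ being of geometric origin on each fiber $X$. In case (a), the restriction to a fiber of $\mathscr{C}^\circ \to B$ is, by Diarra's definition \cite[\S3-\S5]{diarra}, precisely a pullback representation, which gives conclusion (1).

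In case (b), I would restrict $\mathbb{V}$ to a generic fiber $X \cong \mathbb{P}^1 \setminus D$ of $\mathscr{C}^\circ \to B$. Then $\mathbb{V}|_X$ is of geometric origin and Zariski-dense, and its local monodromy about the $i$-th puncture is exactly $\rho(\gamma_i) = A_i$; in particular at least one local monodromy has infinite order by hypothesis on $\rho$. After relabeling so that the monodromy at $\infty$ is non-scalar (possible since $\mathbb{V}$ is Zariski-dense), \autoref{thm:main-thm-intro} applies and produces finite-order characters $\chi$ on $\pi_1(\mathbb{G}_m)$ and $\mathbb{M}$ on $X$ together with a local system $\mathbb{U}$ on $X$ whose monodromy image is a finite complex reflection group, such that $\mathbb{V}|_X = MC_\chi(\mathbb{U}) \otimes \mathbb{M}$. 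Passing to fundamental group representations yields conclusion (2), with $\gamma$ the monodromy of $\mathbb{U}$ and $\nu$ the monodromy of $\mathbb{M}$; irreducibility of $\gamma$ is automatic, since otherwise $MC_\chi(\gamma) \otimes \nu$ could not have Zariski-dense image in $\SL_2(\mathbb{C})$.

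The genuine technical content is housed entirely in \autoref{thm:main-thm-intro}; the deduction itself is essentially a matter of matching hypotheses. The main thing to confirm is that the generic fiber of $\mathscr{C}^\circ \to B$ indeed corresponds to a generic point of $\mathscr{M}_{0,n,\mathbb{Q}}$, which follows from dominance and quasi-finiteness of $B \to \mathscr{M}_{0,n}$, so that the genericity hypothesis of \autoref{thm:main-thm-intro} is in force; and that the infinite-order local monodromy condition transfers without loss, which is immediate because the $A_i$ are themselves the local monodromies of the fiber.
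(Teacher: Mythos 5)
Your proposal is correct and follows essentially the same route as the paper: spread out via \cite[Corollary 2.3.5]{landesman2022geometric}, invoke the Corlette--Simpson/Loray--Pereira--Touzet dichotomy to split into the pullback case and the geometric origin case, and apply \autoref{thm:main-thm-intro} to a generic fiber in the latter. The only cosmetic difference is that the paper's proof unpacks the dichotomy explicitly (applying Loray--Pereira--Touzet when the local monodromy at infinity is not quasi-unipotent, and Corlette--Simpson when it is), whereas you cite the combined dichotomy as stated in the introduction; both are fine.
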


In \autoref{section:fcrg} we analyze all finite complex reflection groups of rank at least $5$, and in particular we show in \autoref{gmp5} that local systems as in \autoref{thm:main-thm-intro} cannot have non-scalar monodromy at more than $6$ points. Combining this analysis with Diarra's result \cite[Th\'eor\`eme 5.1]{diarra}, which rules out ``pullback" representations in this regime, we deduce the following corollary in \autoref{section:D-large}.
\begin{corollary}\label{cor:finite-orbits-7}
	Let $$\rho: \pi_1(\Sigma_{0,n})\to \on{SL}_2(\mathbb{C})$$ be a representation with Zariski-dense image and non-scalar local monodromy at each puncture. Suppose the local monodromy at one or more punctures has infinite order, and that $[\rho]\in X_n$ has finite orbit under the action of $\on{Mod}_{0,n}$. Then $n\leq 6$.
\end{corollary}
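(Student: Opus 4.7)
The plan is to combine the dichotomy of \autoref{cor:main-classification} with quantitative bounds in each of its two cases. I would first apply that corollary to the given $\rho$: either (1) $\rho$ is a ``pullback'' representation in Diarra's sense, or (2) there exist finite-order characters $\nu$ and $\chi$ and an irreducible representation $\gamma:\pi_1(\Sigma_{0,n})\to\GL_r(\mathbb{C})$ whose image is a finite complex reflection group, such that $\rho=MC_\chi(\gamma)\otimes\nu$.

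In case (1), I would directly invoke Diarra's \cite[Théorème 5.1]{diarra}, which, as the excerpt explicitly indicates, rules out pullback canonical representations in the regime $n\geq 7$; this immediately yields $n\leq 6$.

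In case (2), the hypothesis of non-scalar local monodromy at \emph{every} puncture is the key input. Via Katz's description of how middle convolution transforms local monodromies (\autoref{prop:katz-output}), non-scalarity of the local monodromy of $\mathbb{V}=MC_\chi(\mathbb{U})\otimes\mathbb{M}$ at a puncture forces the local monodromy of $\mathbb{U}$ there to be non-scalar of a very restricted form---essentially a pseudoreflection up to a central twist. Hence the FCRG appearing as the image of $\gamma$ must contain at least $n$ pseudoreflections spread over suitably many conjugacy classes. I would then split on the rank $r$: if $r\geq 5$, I would apply \autoref{gmp5} directly to conclude $n\leq 6$. If $r\in\{2,3,4\}$, I would invoke the Shephard--Todd classification together with the case analysis of \autoref{section:fcrg}, verifying the bound group-by-group; the irreducibility of $\gamma$ (forced by Zariski-density of $\rho$) sharply restricts which FCRGs can occur and limits the pseudoreflection count.

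The main obstacle I anticipate is the small-rank verification in case (2), since the Shephard--Todd list contains the infinite families $G(m,p,r)$ together with many exceptional groups for $r\leq 4$. However, for each irreducible FCRG the number of conjugacy classes of pseudoreflections is a known invariant tabulated in the Shephard--Todd list, and the middle-convolution formula of \autoref{prop:katz-output} translates this directly into a bound on $n$. I expect a uniform check to recover $n\leq 6$ for each such group, matching the bound from \autoref{gmp5} in the high-rank regime; combined with the pullback case, this proves the corollary.
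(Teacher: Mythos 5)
Your overall strategy---split via the pullback/middle-convolution dichotomy, rule out pullbacks with Diarra's \cite[Th\'eor\`eme 5.1]{diarra}, and rule out the middle-convolution case using the finite complex reflection group (FCRG) analysis of \autoref{section:fcrg}---matches the paper, which phrases the dichotomy via \cite[Theorem 2]{corlette-simpson} and \cite[Theorem A]{loray-etc} rather than through \autoref{cor:main-classification}, but this is only a difference of packaging.

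However, there is a genuine gap in how you close the middle-convolution branch. The crucial fact you never make explicit is \autoref{prop:katz-output}(1): the rank of the FCRG representation $\gamma$ (equivalently, of the local system $\mathbb{U}$ output by Katz's algorithm) is \emph{exactly} $|D'|-2$, which under the hypothesis of non-scalar local monodromy at every puncture is $n-2$. Without this identity, your two sub-branches do not actually bound $n$: in the $r\geq 5$ branch, \autoref{gmp5} only says such groups \emph{do not arise} as outputs of Katz's algorithm---it does not by itself say anything about $n$; and in the $r\leq 4$ branch you propose a group-by-group verification ``translating the pseudoreflection count into a bound on $n$,'' but no such verification is needed (or would, in the form you describe, be straightforward to carry out). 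Once you insert $r=n-2$, the argument collapses to a single contrapositive: if $n\geq 7$ then $r\geq 5$, and the second conclusion of \autoref{gmp5} (which already bakes in the Shephard--Todd classification) says no irreducible rank-two motivic local system with no unitary Galois conjugates exists with $|D'|\geq 7$; hence $n\leq 6$. Your anticipated ``small-rank verification'' is vacuous because $r\leq 4$ is equivalent to $n\leq 6$, the very bound you are trying to prove.
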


In \autoref{section:examples}, we give a number of examples of such local systems on $\Sigma_{0,n}$ with non-scalar monodromy at $n\leq 6$ points, arising via middle convolution from finite complex reflection groups, which show that the bound of \autoref{cor:finite-orbits-7} is sharp. \autoref{cor:finite-orbits-7} proves part of the conjecture of \cite[\S11]{tykhyy-garnier} in the case under consideration, i.e.~when there is a local monodromy matrix of infinite order and $n> 6$.

\subsection{Previous work}\label{subsection:previous-work} 
The action of $\text{Mod}_{0,n}$ on $X_n$ has been heavily studied, in several quite distinct contexts. While we cannot summarize all of this work here, we do our best to indicate the various perspectives motivating research in the subject.
\subsubsection{Dynamics of character varieties}
Fixing conjugacy classes $C_1, \cdots, C_n\subset \on{SL}_2(\mathbb{C})$ and, setting $\mathcal{Z}=(C_1, \cdots, C_n)$, 
we define the \emph{relative character variety} $X_n(\mathcal{Z})$ of $\pi_1(\Sigma_{0,n})$ to be the subset of $X_n$ consisting of those conjugacy classes of $(A_1, \cdots, A_n)$ such that $A_i\in C_i$ for all $i$. As the action of $\on{Mod}_{0,n}$ simply permutes the $C_i$, there is an index $n!$ subgroup $\on{PMod}_{0,n}\subset \on{Mod}_{0,n}$ whose action preserves $X_n(\mathcal{Z})$. The dynamics of this action, and its higher genus analogues, are extremely well-studied, though still far from completely understood. Our results amount to a classification of finite orbits in $X_n(\mathcal{Z})$ 
when some $C_i$ has infinite order.

The study of $X_4$ goes back to Fricke and Klein \cite{fricke-klein}, and the study of its dynamics dates to work of Markoff \cite{markoff1879formes, markoff1880formes}. In this case $X_4(\mathcal{Z})$ 
is an open subset 
of the affine cubic surface $$x^2+y^2+z^2+xyz=ax+by+cz+d;$$  the dependence of $a,b,c,d$ on $\mathcal{Z}$ may be found in \cite[Equation (1.5)]{cantat2009holomorphic}, where Cantat and Loray studied the holomorphic dynamics of the $\on{Mod}_{0,4}$-action on $X_4$ in some detail. Goldman \cite{goldman1997ergodic, goldmanmapping}, Previte-Xia \cite{previte2000topological, previte2002topological}, Pickrell-Xia \cite{pickrell2002ergodicity, pickrell2003ergodicity} and others studied the action of the mapping class group of a surface $\Sigma_{g,n}$ on its character variety (valued in a compact Lie group) with the aim of understanding ergodicity and density properties of this action. Brown, Eskin, Filip, and Rodriguez-Hertz have ongoing work studying this action with the goal of understanding orbit closures and invariant measures.  
Bourgain, Gamburd, and Sarnak, \cite{bgs1, bourgain2016markoff} as well as Chen \cite{chen2020nonabelian}, used the dynamics of the $\on{Mod}_{0,4}$-action on $X_4$ to prove strong approximation results for $X_4$; Whang \cite{whang2020arithmetic, whang2020global, whang2020nonlinear} began the process of generalizing these results to higher genus and $n$ large. See also Michel's work \cite{michel2006hurwitz}, which amounts to an analysis of the finite orbits of the action of $\on{Mod}_{0,n}$ on the real points of certain higher rank relative character varieties, parametrizing representations into (not necessarily finite) reflection groups.

Finally, two of the authors of this paper studied the action of the mapping class group $\on{Mod}_{g,n}$ of a higher genus surface $\Sigma_{g,n}$ on the character variety parametrizing $r$-dimensional representations of $\pi_1(\Sigma_{g,n})$, and showed that if $g>r^2-1$, all finite orbits correspond to representations with finite image \cite{landesman2022canonical}, after an analogous result was proved by Biswas-Gupta-Mj-Whang in rank $2$ \cite{biswas-whang}.
\subsubsection{Isomonodromy}
Much previous work on the question considered here has been done in the context of \emph{isomonodromy}. In particular classifying finite orbits of the $\on{Mod}_{0,n}$-action on $X_n$ when $n=4$ is equivalent to classifying algebraic solutions to the Painlev\'e VI equation \cite{lisovyy-tykhyy}; this is the primary source of historical interest in the question. Our work in this paper may be considered analogously---our main result classifies algebraic solutions to the Schlesinger system
$$
    \begin{dcases}
      \frac{\partial B_i}{\partial \lambda j}=\frac{[B_i, B_j]}{\lambda_i-\lambda_j} & i\neq j\\
      \frac{\partial B_i}{\partial \lambda_i}=-\sum_{j\neq i}\frac{[B_i, B_j]}{\lambda_i-\lambda_j}\\
      \sum B_i = 0
    \end{dcases}
$$
where the $B_i\in \mathfrak{sl}_2(\mathbb{C})$, $i=1,\cdots, n$, with
$\on{exp}(2\pi i B_j)$ of infinite order for some $j$; algebraic solutions to
this system correspond to finite orbits of $\on{Mod}_{0,n}$ on $X_n$ (see
\cite[Theorem A]{cousin2017algebraic} 
or \cite[Remark 2.3.6]{landesman2022canonical}).

The history of the case of $X_4$---the Painlev\'e VI equation---is summarized nicely by Boalch \cite{boalch2007towards}. Finite orbits (equivalently, algebraic solutions to the Painlev\'e VI equation) were discovered by many people, including Andreev-Kitaev \cite{andreev2002transformations}, Boalch \cite{boalch-higher, boalch-icosahedral,boalch-six, boalch2005klein, boalch2007towards}, Doran \cite{doran}, Kitaev \cite{kitaev2005remarks, kitaev2006grothendieck}, Dubrovin-Mazzocco \cite{dubrovin-mazzocco}, Hitchin \cite{hitchen-poncelet}, and others. The list of finite orbits they found was eventually proven to be complete by Lisovyy \& Tykhyy, using a computer-aided proof \cite{lisovyy-tykhyy}.

Finite orbits of the $\on{Mod}_{0,5}$-action on $X_5$ were studied by
Calligaris-Mazzocco \cite{calligaris-mazzocco} and Tykhyy \cite{tykhyy-garnier}, who gave a complete, computer-aided classification in this case. Beyond this, no general classification was known prior to this work, as far as we are aware.

\subsubsection{Middle convolution} The use of middle convolution and finite complex reflection groups to produce solutions to the Painlev\'e VI equation, where middle convolution is closely related to the ``Okamoto transform" \cite[\S9]{loray2010foliations}, has been known for some time; it appears implicitly in Boalch's work \cite{boalch-reflections, boalch2005klein} and explicitly in work of Dettweiler-Reiter \cite{dettweiler-reiter-painleve}. Girand \cite[\S5]{girand2016equations} used middle convolution to produce some finite braid group orbits in $X_5$. Our Hodge-theoretic analysis of middle convolution has precursors in \cite{dettweiler-sabbah, dettweiler-sabbah-erratum, nicolas-mc}; in principle we expect we could have used their results to prove \autoref{prop:key-computation}, though we chose to give a direct argument.

Kitaev conjectured \cite{kitaev-special, kitaev2006grothendieck} that all finite orbits in the case of $X_4$ (namely, algebraic solutions to the Painlev\'e VI equation) are, up to Okamoto transformation/middle convolution, either ``pullback representations" or have finite monodromy; this conjecture was confirmed by Lisovyy-Tykhyy's classification \cite{lisovyy-tykhyy}. It is interesting to consider our results here in light of this conjecture: our work gives a computer-free proof in the case some local monodromy matrix has infinite order, and a generalization to the case that $n>4$.
\subsection{Sketch of proof}
Our proof of \autoref{thm:main-thm-intro} is closely related to Katz's classification of rigid local systems \cite{katz-rigid}. Given an irreducible rigid local system $\mathbb{V}$ of rank at least $2$ on $\mathbb{P}^1\setminus D$, Katz gives an algorithm for choosing a rank one local system $\mathbb{L}$ on $\mathbb{P}^1\setminus D$, and a rank one local system $\chi$ on $\mathbb{G}_m$, such that $MC_\chi(\mathbb{V}\otimes \mathbb{L})$ is rigid and has rank less than that of $\mathbb{V}$. Hence iterating this procedure eventually yields a local system of rank one. As middle convolution is an invertible operation, this shows that all rigid local systems can be constructed out of local systems of rank one.

We show in \autoref{section:katz} that a variant of Katz's algorithm, applied to a local system satisfying the hypotheses of \autoref{thm:main-thm-intro}, necessarily yields a local system with all of its Galois conjugates \emph{unitary}. Combined with the fact that Katz's algorithm preserves integrality of a local system, this implies that the local system thus produced has finite monodromy. We conclude that its monodromy is given by a finite complex reflection group by analyzing the effect of middle convolution on local monodromy. Unlike in Katz's situation, our application of middle convolution typically \emph{increases} the rank of the local system in question.

It is natural to ask: to what extent does middle convolution explain MCG-finite local systems in higher rank? Rigid local systems are evidently MCG-finite (as they are isolated points of relative character varieties, and hence are permuted by $\on{Mod}_{0,n}$). The fact that a variant of Katz's algorithm extends beyond the rigid case to classify some MCG-finite local systems in our setting is, we think, suggestive.

\subsection{Notation} \label{section:notation} A \emph{family of $n$-pointed projective lines} is a map $\pi: \mathbb{P}^1\times \mathscr{M}\to \mathscr{M}$ equipped with $n$ disjoint sections $s_1, \cdots, s_n$. The \emph{associated family of $n$-punctured projective lines} $\pi^\circ: \mathscr{C}^\circ\to \mathscr{M}^\circ$ is the restriction of $\pi$ to the complement of the images of the $s_i$. For $n\geq 3$ we say that $\pi$ is a \emph{versal family} if the induced map $\mathscr{M}\to \mathscr{M}_{0,n}$ is dominant, and in this case we refer to $\pi^\circ$ as the associated \emph{punctured versal family}. We work over the complex numbers throughout.
\subsection{Acknowledgments}
We are grateful for many useful conversations with Philip Boalch, Serge Cantat, Bertrand Deroin, Alex Eskin, Bruno Klingler, Oleg Lisovyy, Frank Loray, Claude Sabbah, Will Sawin, Carlos Simpson, and Nicolas Tholozan. Lam was supported by a Dirichlet Fellowship at Humboldt University, Berlin during the course of this work. Litt was supported by the NSERC Discovery Grant, "Anabelian methods in arithmetic and algebraic geometry."
\section{Parabolic Higgs bundles, variations of Hodge structure, and isomonodromy}\label{section:parabolic}
In this section we recall some preliminaries on parabolic Higgs bundles, non-abelian Hodge theory for non-compact Riemann surfaces, and isomonodromy. 
\subsection{Preliminaries on parabolic bundles}
Let $C$ be a smooth proper curve, and let $D=x_1+\cdots +x_n$ be a reduced divisor on $C$. A parabolic bundle $E_\star$ on $(C, D)$ is a vector bundle $E$ on $C$, equipped with a decreasing filtration $E_{x_j} =E^1_j \supsetneq E^2_j\supsetneq \cdots \supsetneq E^{n_j+1}_{j}=0$ at each $x_j$, with attached  weights $0\leq \alpha_j^1< \cdots < \alpha_{j}^{n_j}<1$. See \cite[\S2]{landesman2022geometric} for further background, notions of homomorphisms of parabolic bundles, quotients and subbundles, slope, stability, etc. We will need the following specialization of the definition of parabolic Hom sheaves in \cite[\S2.2]{landesman2022geometric}:

If $E_\star, F_\star$ are parabolic line bundles on $(C, D)$, the sheaf of parabolic homomorphisms is 
\begin{align}
	\label{equation:parabolic-0-degree}
	\underline{\Hom}(E_\star, F_\star)_0=\underline{\Hom}(E, F)(-D'),
\end{align}
where $D'\subset D$ is the set of points $x_j$ at which the weight of $E_{x_j}$ is greater than that of $F_{x_j}$.

If $(E, \nabla: E\to E\otimes \Omega^1_C(\log D))$ is a flat vector bundle on $C$ with logarithmic singularities along $D$, then $E$ naturally obtains the structure of a parabolic bundle as in \cite[Definition 3.3.1]{landesman2022geometric}. If $x$ is a point of $D$, let $M$ be the residue matrix of $\nabla$ at $x$. Then the weights of the parabolic structure of $E$ at $x$ are $$\on{Re}(\lambda)-\lfloor \on{Re}(\lambda)\rfloor,$$ where $\lambda$ runs over the eigenvalues of $M$.
\subsection{Preliminaries on local systems and variations of Hodge structure}
Recall that a  complex variation of Hodge structure on $C\setminus D$ is a triple $(V, V^{p,q}, \mathbf{D})$ where 
\begin{itemize}
\item $V$ is a $C^{\infty}$ complex vector bundle on $C\setminus D$, equipped with a decomposition $V=\oplus V^{p,q}$, 
\item $\mathbf{D}$ is a flat connection on $V$ satisfying Griffiths transversality, i.e. $\mathbf{D}(V^{p,q})\subset A^{1,0}(V^{p,q}\oplus V^{p-1, q+1})\oplus A^{0,1}(V^{p,1}\oplus V^{p+1, q-1}),$ where $A^{i,j}(E)$ denotes the sheaf of $E$-valued $(i,j)$-forms.
\end{itemize}
It is moreover polarizable if $V$ can be equipped with a Hermitian form $\psi$ flat with respect to $\mathbf{D}$, and such that $V^{p,q}$ and $V^{p', q'}$ are orthogonal to each other for $(p,q)\neq (p',q')$, and $(-1)^p\psi$ is positive definite on $V^{p,q}$. 

Mehta and Seshadri \cite{mehta-seshadri} famously give a correspondence between polystable parabolic bundles of slope 
zero on a pair $(C,D)$ and unitary local systems on $C\setminus D$. The following is a standard generalization of their theorem to polarizable complex variations of Hodge structure $(V, V^{p,q}, \mathbf{D})$ on $C\setminus D$.
\begin{proposition}\label{prop:NAHT}
\begin{enumerate}
\item $(E, \nabla)\defeq (\ker(\mathbf{D})\otimes \mc{O}, \mathrm{id}\otimes d)$ is a holomorphic flat vector bundle, and is moreover equipped with the Hodge filtration $F^pV\defeq \oplus V_{j\geq p}^{j,q}$ by holomorphic sub-bundles. We refer to it as the \emph{holomorphic flat vector bundle} associated to $(V, V^{p,q}, \mathbf{D})$.
\item
There exists an unique flat bundle $(\overline{E}, \overline{\nabla})$ on $C$ with logarithmic singularities along $D$ extending $(E, \nabla)$, known as the    Deligne canonical extension, such that the eigenvalues of the residues of $\overline{\nabla}$ have real parts in $[0,1)$. Moreover,  $(\overline{E}, \overline{\nabla})$ has a Hodge filtration extending that on $E$, which we also denote by $F^{\bullet}$.
\item $\overline{E}$ may be given the structure of a parabolic bundle as in \cite[Definition 3.3.1]{landesman2022geometric}, which we denote by $\overline{E}_{\star}$. The associated graded $(\oplus_i \gr^i_{F^\bullet} \overline{E}_{\star}, \theta)$ of $(\overline{E}_{\star}, F^{\bullet}, \overline{\nabla})$ is a Higgs bundle, and is moreover parabolically polystable of slope zero. 
\end{enumerate} 
\end{proposition}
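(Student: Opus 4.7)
The plan is to address the three parts in order, reserving the Hodge-theoretic heavy lifting for part (3). Throughout I will identify the flat bundle $E$ with the underlying $C^\infty$ bundle $V$, so that comparing the flat and Higgs structures amounts to comparing two $\bar{\partial}$-operators on one bundle.

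For part (1), I decompose $\mathbf{D} = \partial + \bar{\partial} + \theta + \bar{\theta}$ by Hodge type, so that the two clauses of Griffiths transversality give $\theta\colon V^{p,q}\to A^{1,0}(V^{p-1,q+1})$ and $\bar{\theta}\colon V^{p,q}\to A^{0,1}(V^{p+1,q-1})$. The flat holomorphic structure on $E$ corresponds on $V$ to the $\bar{\partial}$-operator $\bar{\partial}+\bar{\theta}$. Both summands preserve $F^p V=\bigoplus_{j\ge p}V^{j,q}$: the first preserves each individual $V^{j,q}$, and the second maps $V^{j,q}$ into $F^{j+1}\subset F^p$. Hence each $F^p V$ is a holomorphic subbundle of $E$, and flatness of $\nabla=\mathrm{id}\otimes d$ is built in.

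For part (2), the existence and uniqueness of the Deligne canonical extension $(\overline{E},\overline{\nabla})$ is a standard consequence of $\nabla$ having regular singularities along $D$; regularity for a polarizable CVHS is Schmid's nilpotent orbit theorem. That the Hodge filtration extends to a filtration of $\overline{E}$ by subbundles (as opposed to merely coherent subsheaves) is the deeper output of Schmid's asymptotic analysis: his estimates for the Hodge metric, together with the nilpotent orbit theorem, show that $F^\bullet$ extends holomorphically across $D$ to a filtration by sub-vector-bundles.

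For part (3), the Higgs field on $\bigoplus_i \gr^i_{F^\bullet}\overline{E}_\star$ is constructed from the $(1,0)$-component of $\overline{\nabla}$ reduced modulo $F^\bullet$; Griffiths transversality again ensures it lands in $\gr^{p-1}_{F^\bullet}\otimes \Omega^1_C(\log D)$. The parabolic weights at each $x_j$ are the ones already prescribed in the paragraph preceding the proposition via the fractional parts of real parts of residue eigenvalues of $\overline{\nabla}$. The main obstacle, and the substantive content of the statement, is parabolic polystability of slope zero. The input here is that a polarized CVHS gives rise to a tame harmonic Hodge metric on $\ker(\mathbf{D})$ whose asymptotics near each puncture are controlled by the parabolic weights; the parabolic non-abelian Hodge theorem of Simpson (on curves) and Mochizuki/Biquard (in greater generality) then identifies the parabolic Higgs bundle arising from such a harmonic metric with a polystable parabolic Higgs bundle of vanishing parabolic Chern classes. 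On a curve, vanishing parabolic $c_1$ is precisely slope zero, completing the claim.
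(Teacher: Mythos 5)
Your proposal is correct and matches the paper's approach: the paper's own proof simply cites the standard machinery (point (1) is declared standard, while (2) and (3) are referred to Simpson's harmonic-theory paper and to \cite[Proposition 4.1.4]{landesman2022geometric} and the references therein), and your outline is a faithful unpacking of precisely those references---the Hodge-type decomposition of $\mathbf{D}$ for (1), Schmid's asymptotics for (2), and the tame harmonic metric / parabolic non-abelian Hodge theorem for (3).
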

\begin{proof}
Point (1) is standard and straightforward to check. The rest of the proposition is a form of \cite[Theorem 8]{simpson1990harmonic}. The construction of the Deligne canonical extension is given  for example in \cite[Definition 4.1.2]{landesman2022geometric}.  For the construction of the parabolic structure on $\overline{E}$, see \cite[Definition 3.3.1]{landesman2022geometric}. For the claim in (2) that the Hodge filtration extends, as well as the semistability statement in (3), we refer the reader to \cite[Proposition 4.1.4]{landesman2022geometric} and the references therein. 
\end{proof}

\begin{definition}\label{defn:quasiunipotent}
	Let $X$ be a smooth quasiprojective variety and $\mathbb{V}$ a local system on $X$. We say that $\mathbb{V}$ has \emph{quasi-unipotent monodromy at infinity} if there exists a simple normal crossings compactification $X\subset \overline{X}$ so that the local monodromy about each component of the boundary $\overline{X}\setminus X$ is \emph{quasi-unipotent}, i.e.~all eigenvalues are roots of unity.
\end{definition}

\subsection{Isomonodromy} 
We let $D$ be a reduced effective divisor of degree $n$ on $\mathbb{P}^1$. Let $(\mathscr{E},\nabla: \mathscr{E}\to \mathscr{E}\otimes \Omega^1(\log D))$ be a logarithmic flat vector bundle on $(\mathbb{P}^1, D)$. 

Suppose we are given a contractible complex manifold $B$, $o\in B$ a point, and a divisor $\mathscr{D}\subset  \mathbb{P}^1\times B$, \'etale over $B$. Suppose moreover we have a fixed isomorphism $(\mathbb{P}^1_o, \mathscr{D}_o)\simeq (\mathbb{P}^1, D)$. Then there is a canonical flat vector bundle $$(\widetilde{\mathscr E}, \widetilde{\nabla}: \widetilde{ \mathscr E}\to \widetilde{\mathscr E}\otimes \Omega^1_{\mathbb{P}^1\times B}(\log \mathscr D))$$ equipped with an isomorphism  $(\widetilde{ \mathscr E}, \widetilde{\nabla})|_{\mathbb{P}^1_o}\overset{\sim}{\to}(\mathscr{E}, \nabla)$, see \cite[\S3.4]{landesman2022geometric} or \cite[Th\'eor\`eme 2.1]{malgrange}, referred to as the \emph{isomonodromic deformation} of $(\mathscr{E}, \nabla)$. This is etymologically justified because the monodromy representation associated to $(\widetilde{\mathscr E}, \widetilde{\nabla})|_{\mathbb{P}^1\setminus \mathscr{D}_b})$ is independent of $b\in B$.

Given $(\mathscr{E}, \nabla)$ as above, with $n\geq 3$, we may always take $B$ to be the universal cover of the moduli space $\mathscr{M}_{0,n}$ of $n$-pointed curves of genus zero. 
In this case we refer to the fiber of $(\widetilde{\mathscr E}, \widetilde \nabla)$ over a general $b\in B$ as \emph{the isomonodromic deformation of $(\mathscr{E}, \nabla)$ to a general nearby $n$-pointed curve}. Here, by \emph{general} we mean ``on the complement of a nowhere dense analytic subset." (See \cite[Definition 3.4.4]{landesman2022geometric} and the surrounding text for a discussion of this notion.) 
\begin{proposition}\label{prop:unobstructed-destabilizing-bundle}
	Let $(\mathscr{E}, \nabla: \mathscr{E}\to \mathscr{E}\otimes\Omega^1_{\mathbb{P}^1}(\log D))$ be a logarithmic flat vector bundle of rank $2$ on $(\mathbb{P}^1,D)$ with irreducible monodromy. Suppose that the isomonodromic deformation $(\mathscr{E}', \nabla')$ of $(\mathscr{E}, \nabla)$ to a general nearby $n$-pointed curve $(\mathbb{P}^1, D')$ (with parabolic structure as in \cite[Definition 3.3.1]{landesman2022geometric}) is not parabolically semistable, with maximal destabilizing sub-bundle $F_\star$. Then $\underline{\Hom}(F_\star, \mathscr{E}'/F_\star)_0=\mathscr{O}(-1)$.
\end{proposition}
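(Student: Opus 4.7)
The plan is to pin down the line bundle $\mathcal{L} \defeq \underline{\Hom}(F_\star, \mathscr{E}'/F_\star)_0$ on $\mathbb{P}^1$ by establishing both $H^0(\mathbb{P}^1, \mathcal{L}) = 0$ and $H^1(\mathbb{P}^1, \mathcal{L}) = 0$; every line bundle on $\mathbb{P}^1$ is of the form $\mathcal{O}(k)$, and only $\mathcal{O}(-1)$ has both cohomology groups vanishing, which will yield the conclusion.

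First I would check $H^0(\mathcal{L}) = 0$. A non-zero global section of $\mathcal{L}$ is the same as a non-zero morphism of parabolic line bundles $F_\star \to (\mathscr{E}'/F)_\star$. A short unwinding of the parabolic degree computation shows that any such morphism forces $\on{pdeg}(F_\star) \leq \on{pdeg}((\mathscr{E}'/F)_\star)$, contradicting the hypothesis that $F_\star$ strictly destabilizes $\mathscr{E}'_\star$.

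For $H^1(\mathcal{L}) = 0$ I would exploit the isomonodromy hypothesis. Since the non-semistable locus is open in $B = \mathscr{M}_{0,n}$ and non-empty by hypothesis, the destabilizing sub-bundle $F$ extends to a family $\mathcal{F} \subset \widetilde{\mathscr{E}}$ of rank $1$ sub-bundles over an open neighborhood $U \subset B$ of the basepoint $b$. Because this is a genuine geometric extension, the first-order obstruction class $\on{obs}(v) \in H^1(\mathbb{P}^1, \mathcal{L})$ to extending $F$ in each tangent direction $v \in T_b B$ must vanish. Unwinding the Schlesinger description of the isomonodromic deformation, $\on{obs}(v)$ is computed as a cup product of $v$ (as an infinitesimal displacement of the $n$ punctures) with the residues at the $x_j$ of the second fundamental form $\Phi : F \to (\mathscr{E}'/F) \otimes \Omega^1_{\mathbb{P}^1}(\log D')$ of $F$ under $\nabla'$. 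Irreducibility of the monodromy of $\nabla'$ forces $\Phi \neq 0$, and I would then argue at the residue level that $\on{obs} : T_b B \to H^1(\mathcal{L})$ is surjective. Combining surjectivity with the vanishing of obstructions gives $H^1(\mathcal{L}) = 0$.

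The hard part will be establishing the surjectivity of $\on{obs}$: identifying the map with the residue pairing above and then showing that the residues of $\Phi$ at the $x_j$ are in sufficiently general position to hit every class in $H^1(\mathcal{L})$. This is exactly where the hypothesis that the isomonodromic deformation is to a \emph{general} nearby pointed curve comes in essentially—without genericity, the residues of $\Phi$ might collapse into a proper subspace of the obstruction group.
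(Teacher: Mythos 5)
Your overall strategy coincides with the paper's: pin down $\mathcal{L} \defeq \underline{\Hom}(F_\star, (\mathscr{E}'/F)_\star)_0$ by showing $H^0 = H^1 = 0$, then conclude $\mathcal{L} \cong \mathcal{O}(-1)$. Your $H^0 = 0$ step via parabolic degrees is fine and is essentially what the paper does (the paper phrases it as $\deg\mathcal{L}<0$, which on $\mathbb{P}^1$ is the same thing). Your $H^1 = 0$ step also has the right skeleton: the obstruction map vanishes, and the obstruction map is surjective, so the target is zero.

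The gap is in the surjectivity claim, which you explicitly flag as ``the hard part'' and leave unresolved, proposing a residue-level general-position argument that (a) you don't carry out and (b) is aimed in the wrong direction. The surjectivity step is in fact elementary and requires no genericity at all. The second fundamental form of $F_\star \subset \mathscr{E}'_\star$ with respect to $\nabla'$ is an $\mathcal{O}$-linear map $\phi\colon T_{\mathbb{P}^1}(-D') \to \mathcal{L}$; it is nonzero precisely because the monodromy is irreducible (otherwise $F$ would be $\nabla'$-stable, giving a flat sub). Both source and target are \emph{line bundles} on a curve, so a nonzero map between them has torsion cokernel, and the long exact sequence in cohomology immediately gives that $H^1(\phi)\colon H^1(T_{\mathbb{P}^1}(-D')) \to H^1(\mathcal{L})$ is surjective. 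Since $T_b\mathscr{M}_{0,n}\cong H^1(T_{\mathbb{P}^1}(-D'))$ and your $\mathrm{obs}$ factors through $H^1(\phi)$, surjectivity is automatic. No control over the residues of $\Phi$ is needed, and there is no ``general position'' input at this step.

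Relatedly, you have the role of genericity misplaced: it is not needed for surjectivity but for the \emph{vanishing}. Your sketch that $F$ extends because the non-semistable locus is open elides the real point, namely that one needs the Harder–Narasimhan \emph{type} to be locally constant near $b$ so that the maximal destabilizing subbundle sweeps out a line subbundle of the isomonodromic family $\widetilde{\mathscr{E}}$; this local constancy holds on a dense open, which is exactly where the hypothesis that $D'$ is general enters (the paper packages this as a citation to \cite[Lemma 6.4.2]{landesman2022geometric}). So the proposal is correct in outline but incomplete: fix the genericity attribution and replace the planned residue analysis with the torsion-cokernel observation, and the argument closes.
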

\begin{proof}
	This follows from a deformation-theoretic argument, as in the proof of \cite[Theorem 6.1.1]{landesman2022geometric}. The connection $\nabla'$ yields a nonzero $\mathscr{O}$-linear map $T_{\mathbb{P}^1}(-D)\to \underline{\Hom}(F_\star, \mathscr{E}'/F_\star)_0$ (that the map is non-zero follows as the monodromy of $(\mathscr{E}', \nabla')$ is irreducible). As this is a map of line bundles, the cokernel is torsion, and hence the map is surjective on $H^1$.
	
	As $D'$ is general, \cite[Lemma 6.4.2]{landesman2022geometric} implies that this map induces the zero map $$H^1(T_{\mathbb{P}^1}(-D'))\to H^1(\underline{\Hom}(F_\star, \mathscr{E}'/F_\star)_0).$$ Thus in particular we must have  $H^1(\underline{\Hom}(F_\star, \mathscr{E}'/F_\star)_0)=0$. But as $F$ is maximal destabilizing, the parabolic degree of $\underline{\Hom}(F_\star, \mathscr{E}'/F_\star)$ (and hence the honest degree of $\underline{\Hom}(F_\star, \mathscr{E}'/F_\star)_0$) is negative; thus $\underline{\Hom}(F_\star, \mathscr{E}'/F_\star)_0$ must be the unique-up-to-isomorphism line bundle on $\mathbb{P}^1$ with negative degree and vanishing $H^1$, namely $\mathscr{O}(-1)$.
\end{proof}
\begin{proposition}\label{prop:not-all-unipotent}
	Let $\mathbb{V}$ be an irreducible complex local system of rank $2$ on
	$\mathbb{P}^1\setminus D$ and let $(\mathscr{E}, \nabla: \mathscr{E}\to
	\mathscr{E}\otimes\Omega^1_{\mathbb{P}^1}(\log D))$ be its Deligne
	canonical extension. Suppose that the isomonodromic deformation
	$(\mathscr{E}', \nabla')$ of $(\mathscr{E}, \nabla)$ to a general nearby
	$n$-pointed curve $(\mathbb{P}^1, D')$ underlies a polarizable complex
	variation of Hodge structure. Then there exists a point $x$ of $D$ such
	that  $\mathbb{V}$ does not have unipotent monodromy at $x$.
\end{proposition}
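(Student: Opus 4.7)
My plan is to argue by contradiction: suppose every local monodromy matrix of $\mathbb{V}$ is unipotent. Isomonodromic deformation preserves the monodromy representation, so the local system $\mathbb{V}'$ attached to $(\mathscr{E}',\nabla')$ also has only unipotent local monodromies and is irreducible. By the eigenvalue-to-weight recipe recorded just before \autoref{prop:NAHT}, every parabolic weight of $\mathscr{E}'$ vanishes, so parabolic degree agrees with ordinary degree throughout the argument. The unitary case is then immediate: unipotent plus unitary forces every local monodromy matrix to be the identity, so $\mathbb{V}'$ extends across $D'$ to a local system on $\mathbb{P}^1$, which is trivial, contradicting irreducibility.

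So I may assume $\mathbb{V}'$ underlies a non-unitary polarizable complex VHS, with a nontrivial two-step Hodge filtration $0\subsetneq F^{p+1}\subsetneq \mathscr{E}'$. By \autoref{prop:NAHT} the associated graded Higgs bundle $(\gr^\bullet\mathscr{E}',\theta)$ is parabolically polystable of slope zero; Griffiths transversality makes $\theta$ vanish on $\gr^p=\mathscr{E}'/F^{p+1}$ and send $\gr^{p+1}=F^{p+1}$ into $\gr^p\otimes\Omega^1(\log D')$, so $\mathscr{E}'/F^{p+1}$ is a sub-Higgs bundle. The main subtlety of the plan is ruling out a nontrivial polystable splitting while $\theta\neq 0$: I would argue that $\ker\theta=\gr^p$ generically, that $\gr^{p+1}$ then projects isomorphically onto any would-be complementary line summand, and that together with the Griffiths shape of $\theta$ this forces the Higgs field on the complementary summand to vanish, hence $\theta=0$, a contradiction. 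Thus $(\gr^\bullet\mathscr{E}',\theta)$ is actually stable, and the sub-Higgs bundle $\mathscr{E}'/F^{p+1}$ must have strictly negative degree; writing $d:=\deg F^{p+1}$, this says $d>0$.

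Since $F^{p+1}$ has parabolic slope $d>0$ inside the slope-zero $\mathscr{E}'$, the latter is parabolically unstable with unique maximal destabilizing line subbundle $F^{p+1}$ (any second destabilizing $L$ would give an injection $F^{p+1}\oplus L\hookrightarrow \mathscr{E}'$ of strictly positive total degree, impossible for a rank-two degree-zero bundle). Applying \autoref{prop:unobstructed-destabilizing-bundle} yields
\[
\underline{\Hom}(F^{p+1}_\star,(\mathscr{E}'/F^{p+1})_\star)_0 \cong \mathscr{O}(-1).
\]
With all parabolic weights zero, the subset from \eqref{equation:parabolic-0-degree} at which the weight of $F^{p+1}$ strictly exceeds that of $\mathscr{E}'/F^{p+1}$ is empty, so the left-hand side is simply $(F^{p+1})^{-1}\otimes(\mathscr{E}'/F^{p+1})$, of degree $-2d$. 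Comparing with $\mathscr{O}(-1)$ gives $-2d=-1$, which is impossible for $d\in\mathbb{Z}$, completing the argument.
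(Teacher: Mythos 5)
Your argument is correct and follows the paper's proof essentially step by step: reduce to the non-unitary case, observe that unipotence makes the parabolic structure trivial, use Griffiths transversality and \autoref{prop:NAHT}(3) to conclude $\deg F \geq 1$ and $\deg \mathscr{E}'/F \leq -1$, and contradict $\underline{\Hom}(F_\star,(\mathscr{E}'/F)_\star)_0\cong\mathscr{O}(-1)$ from \autoref{prop:unobstructed-destabilizing-bundle} by a degree count. The only place you go slightly beyond the paper is in spelling out why the polystable rank-two graded Higgs bundle is in fact stable when $\theta\neq 0$ (the paper cites \autoref{prop:NAHT}(3), which only asserts polystability, and then uses stability without comment); this is a worthwhile detail to make explicit.
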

\begin{proof}
Assume to the contrary that $\mathbb{V}$ has unipotent monodromy at all points $x$ of $D$. If $\mathbb{V}$ is unitary, then in fact $\mathbb{V}$ has trivial monodromy at all points $x$ of $D$, contradicting its irreducibility. So we may assume $\mathbb{V}$ is not unitary, i.e.~after shifting indices, it underlies a variation of Hodge structure with $h^{1,0}=h^{0,1}=1$, and $h^{i,j}=0$ otherwise. Let $F\subset \mathscr{E}$ be the non-trivial piece of the Hodge filtration.

As $\mathbb{V}$ has unipotent monodromy along $D$, the induced parabolic structure on $\mathscr{E}$ is trivial, so the Higgs bundle $\oplus_i \on{gr}^i_{F^\bullet}(\mathscr{E})=F\oplus \mathscr{E}/F$ with the Higgs field induced by $\nabla$ is stable as a Higgs bundle of slope zero, by \autoref{prop:NAHT}(3). As $F$ is a quotient Higgs bundle, it must have positive degree, so we have $\deg F\geq 1, \deg \mathscr{E}/F\leq -1$. Thus $\deg \underline{\Hom}(F, \mathscr{E}/F)\leq -2$, contradicting \autoref{prop:unobstructed-destabilizing-bundle}.
\end{proof}

\section{Middle convolution}\label{section:mc}
We now recall the definition and basic properties of Katz's middle convolution operation. Aside from Katz's book \cite{katz-rigid}, the reader may find \cite{dettweiler-reiter-painleve, simpson2009katz} to be useful references.
\subsection{Definitions} As before, we let $D\subset \mathbb{P}^1$ be a reduced effective divisor containing $\{\infty\}$, and set $X=\mathbb{P}^1\setminus D$. We let $U=X\times X\setminus \Delta$, where $\Delta$ is the diagonal, and let $j: U\hookrightarrow \mathbb{P}^1\times X$ be the natural inclusion. Let $\pi_1: U\to X$ be the projection onto the first coordinate, and let $\pi_2: \mathbb{P}^1\times X\to X$ be the projection onto the second coordinate. Let $\alpha: U\to \mathbb{G}_m$ be the map $(x, y)\mapsto x-y$. Some of this data is depicted in \autoref{figure:mc}.
\begin{definition}
	Given a local system $\mathbb{W}$ on $X$, and a rank one local system $\chi$ on $\mathbb{G}_m$, the middle convolution $MC_\chi(\mathbb{W})$ is defined to be $$MC_\chi(\mathbb{W}):=R^1\pi_{2*}j_*(\pi_1^*\mathbb{W}\otimes \alpha^*\chi).$$
\end{definition}
This definition is a translation of Katz's \cite[\S2.8]{katz-rigid}, avoiding the language of perverse sheaves. See \cite[Theorem 1.1]{dettweiler2003middle} for a proof these two definitions agree.
\begin{figure}[h]
\centering{
\resizebox{110mm}{!}{
\begin{overpic}[trim={0 12cm 0 0}, clip]{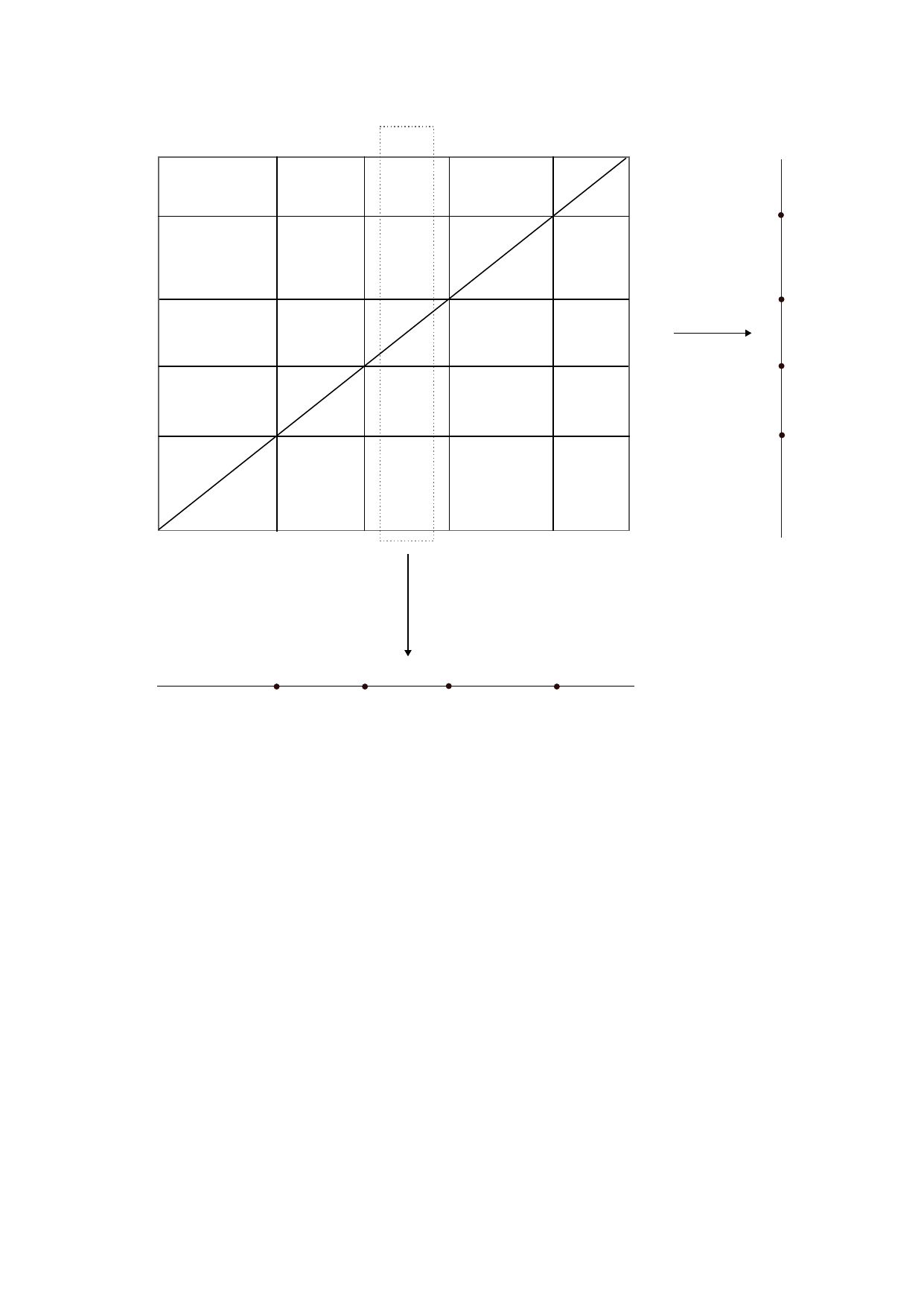}
\put(30,8){$x_1$}
\put(39,8){$x_2$}
\put(48,8){$x_3$}
\put(60,8){$\infty$}
\put(45, 19){$\pi_2\circ j$}
\put(86,37){$x_1$}
\put(86,44){$x_2$}
\put(86,52){$x_3$}
\put(86,61){$\infty$}
\put(76, 49){$\pi_1$}
\put(20, 63){$U$}
\put(15, 9){$X$}
\put(84, 69){$X$}
\put(25,31){$\Delta$}
\end{overpic}	
}
\caption{A depiction of $U$ with its two projections. The region enclosed in a dashed box is depicted in \autoref{figure:zoomed}. In general, the local system $\alpha^*\chi$ has nontrivial local monodromy about the diagonal $\Delta$, $\pi_1^{-1}(\infty), (\pi_2\circ j)^{-1}(\infty)$, and the local system $\pi_1^*\mathbb{W}$ has non-trivial local monodromy about $\pi_1^{-1}(x_i)$, $\pi_1^{-1}(\infty)$.}
\label{figure:mc}
}
\end{figure}
\subsection{Basic properties}
\begin{proposition}\label{prop:basic-props}
	If $\mathbb{W}$ is irreducible and $\chi$ is non-trivial, then
	\begin{enumerate}
		\item $MC_\chi(\mathbb W)$ is irreducible, and
		\item $MC_\chi(MC_{\chi^{-1}}(\mathbb{W}))=\mathbb{W}$.
	\end{enumerate}
	If $\mathbb{W}$ underlies a complex variation of Hodge structure and $\chi$ is unitary, then $MC_\chi(W)$ underlies a complex variation of Hodge structure as well.
\end{proposition}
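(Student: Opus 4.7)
The plan is to recognize these as standard properties of Katz's middle convolution and reduce each one to a known mechanism. The cleanest route is via the perverse-sheaf and Fourier-transform formalism of \cite{katz-rigid}: under the equivalence of \cite[Theorem 1.1]{dettweiler2003middle}, our $MC_\chi$ agrees with Katz's perverse-sheaf middle convolution on $\mathbb{A}^1 \subset \mathbb{P}^1$, and after Fourier transform $\mathrm{FT}$ on $\mathbb{A}^1$, middle convolution is intertwined with ordinary tensor product. Concretely, $\mathrm{FT}(MC_\chi(\mathbb{W})) \cong \mathrm{FT}(\mathbb{W}) \otimes L_\chi$, where $L_\chi$ is a rank-one local system depending on $\chi$ (essentially the Fourier transform of the Kummer sheaf attached to $\chi$).

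For (1), $\mathrm{FT}$ is an auto-equivalence on perverse sheaves that sends simple objects to simple objects, and tensoring an irreducible perverse sheaf with a rank-one local system preserves irreducibility; hence $MC_\chi(\mathbb{W})$ is irreducible. For (2), the composition $MC_\chi \circ MC_{\chi^{-1}}$ becomes, under $\mathrm{FT}$, tensoring successively with $L_\chi$ and $L_{\chi^{-1}}$, whose product is trivial; so the composition is the identity on irreducible objects, yielding $MC_\chi(MC_{\chi^{-1}}(\mathbb{W})) = \mathbb{W}$.

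For the VHS claim the plan is Hodge-theoretic. Since $\chi$ is unitary of rank one, $\alpha^*\chi$ underlies a weight-zero complex VHS on $U$, and tensoring against $\pi_1^*\mathbb{W}$ gives a polarizable complex VHS on $U$. To obtain a VHS on $X$ after applying $R^1\pi_{2*}j_*$, I would invoke Saito's theory of mixed Hodge modules: $j_*$ is middle extension at the level of Hodge modules, producing a pure polarizable Hodge module on $\mathbb{P}^1\times X$; since $\pi_2$ is proper, its direct image remains pure, and the degree-one cohomology restricts to a polarizable VHS on a dense open of $X$. Alternatively, the explicit constructions of Dettweiler-Sabbah \cite{dettweiler-sabbah, dettweiler-sabbah-erratum} produce the Hodge filtration on $MC_\chi(\mathbb{W})$ directly from that of $\mathbb{W}$, bypassing the full Hodge-module machinery.

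The main obstacle is the VHS assertion: preservation of polarizability under middle extension is a substantive theorem of Saito, and the unitarity hypothesis on $\chi$ is essential here, since without it $\alpha^*\chi$ would not itself underlie a VHS and the Hodge-theoretic input to the pushforward would be missing. Parts (1) and (2), by contrast, are essentially formal once the Fourier-transform dictionary is in place.
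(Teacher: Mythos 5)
Your proposal matches the paper's approach: the paper simply cites \cite[Theorem 2.9.7]{katz-rigid} for (1) and (2) and \cite[Proposition 3.1.1]{dettweiler-sabbah} for the VHS statement, and you are sketching the content behind exactly these citations (Katz's proof does go via Fourier transform, and Dettweiler--Sabbah is one of the routes you name for the Hodge-theoretic part). One small imprecision worth flagging: the Fourier transform of the Kummer object attached to $\chi$ is not a rank-one local system on all of $\mathbb{A}^1$, but only on $\mathbb{G}_m$ (extended to the origin by middle extension, up to a Gauss-sum constant), so the identity $\mathrm{FT}(MC_\chi(\mathbb{W}))\cong \mathrm{FT}(\mathbb{W})\otimes L_\chi$ has to be read as ``tensor on $\mathbb{G}_m$ and take middle extension,'' and the assertion that this preserves irreducibility requires a short argument rather than being the trivial fact that tensoring with a globally defined rank-one local system is an equivalence. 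With that caveat understood, the strategy is exactly Katz's and the conclusions follow; your Saito-based alternative for the VHS claim is also valid, but the paper chooses the more concrete Dettweiler--Sabbah route you also mention.
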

\begin{proof}
	(1) and (2) are \cite[Theorem 2.9.7]{katz-rigid}. The statement about complex variations is part of \cite[Proposition 3.1.1]{dettweiler-sabbah}.
\end{proof}
We recall the effect of middle convolution on local monodromies. By $J(\lambda,
\ell)$ we mean the $\ell\times\ell$ Jordan block with generalized eigenvalue
$\lambda$, so, e.g.,~$J(\lambda, 2)$ is the matrix $$\begin{pmatrix} \lambda & 1 \\ 0& \lambda \end{pmatrix}.$$
\begin{proposition}[{\cite[Corollary 3.3.6]{katz-rigid}, \cite[Lemma 5.1]{dettweiler-reiter-painleve}}] \label{prop:local-monodromies} Suppose $\chi$ is a rank one local system on $\mathbb{G}_m$ with monodromy $\lambda\neq 1$ around $0$. The functor $MC_{\chi}$ changes the local monodromies of a local system $\mb{W}$ as follows:
\begin{enumerate}
\item 
At each finite puncture $x_i\in D\setminus \{\infty\}$, each Jordan block $J(\beta, \ell)$ of the local monodromy of $\mb{W}$ contributes $J(\beta\lambda, \ell')$ to the Jordan canonical form of the local monodromy of $MC_\chi(\mb{W})$ at $x_i$ where 
\[
\ell' =\begin{cases}
\ell \ \ \ \ \  \ \ \ \ \text{if} \ \beta\neq 1, \lambda^{-1}\\
\ell-1 \ \ \text{if} \ \beta =1 \\
\ell+1 \ \ \text{if} \  \beta=\lambda^{-1}.
\end{cases}
\]
All other Jordan blocks are of the form $J(1,1)$.
\item At $\infty$, each $J(\beta, \ell)$ contributes $J(\beta\lambda^{-1}, \ell')$ where 
\[
\ell' =\begin{cases}
\ell \ \ \ \ \  \ \ \ \ \text{if} \ \beta\neq 1, \lambda \\
\ell+1 \ \ \text{if} \ \beta =1 \\
\ell-1 \ \ \text{if} \  \beta=\lambda.
\end{cases}
\]
All other Jordan blocks are of the form $J(\lambda^{-1},1)$. 
\end{enumerate}
\end{proposition}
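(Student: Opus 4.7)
The plan is a local analysis of $MC_\chi(\mathbb{W}) = R^1\pi_{2*}j_*(\pi_1^*\mathbb{W}\otimes\alpha^*\chi)$ near each puncture $x_i \in D$. First I would restrict the base of $\pi_2$ to a small disk $\Delta_i$ around $x_i$; the stalk of $MC_\chi(\mathbb{W})$ at a nearby point $y$ is then the parabolic (middle-extension) cohomology $H^1(\mathbb{P}^1, j_*\mathcal{F}_y)$, where $\mathcal{F}_y$ is the restriction of $\pi_1^*\mathbb{W}\otimes\alpha^*\chi$ to the fiber $\mathbb{P}^1\setminus(D\cup\{y\})$. Crucially, the local monodromies of $\mathcal{F}_y$ are: the local monodromy $A_j$ of $\mathbb{W}$ at each $x_j \neq x_i$; the scalar $\lambda$ at $y$ (coming from $\alpha^*\chi$); the local monodromy $A_i$ at $x_i$; and $\lambda^{-1}A_\infty$ at $\infty$, since $\alpha(x,y)=x-y$ has inverse winding behavior at infinity.

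Next I would decompose $H^1(\mathbb{P}^1, j_*\mathcal{F}_y)$ into local contributions indexed by the punctures, each of dimension $\dim V - \dim V^M$ where $V$ is the appropriate stalk and $M$ the local monodromy there, and track how these transform as $y$ traverses a small loop around $x_i$. Under this loop the punctures $y$ and $x_i$ braid around each other, and a direct calculation shows that the contribution at $x_i$ to $MC_\chi(\mathbb{W})$ arises from the block of $A_i$ twisted by $\lambda$: each Jordan block $J(\beta,\ell)$ of $A_i$ gives a Jordan block with eigenvalue $\beta\lambda$, whose size equals $\ell$ generically. The size drops by one when $\beta = 1$, because the original block contained an invariant vector that is killed by the middle extension $j_*$, and it increases by one when $\beta = \lambda^{-1}$, as an extra coinvariant arising from the resonance $\beta\lambda = 1$ lifts to a non-trivial cohomology class. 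The remaining $J(1,1)$ blocks account for trivial cohomology classes supported at the other punctures and the diagonal. Part (2) is the analogous computation at $\infty$, with $\lambda$ replaced by $\lambda^{-1}$ throughout, which swaps the roles of $\lambda$ and $\lambda^{-1}$ in the resonance conditions.

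The main obstacle is the careful accounting in the resonance cases $\beta \in \{1, \lambda^{\pm 1}\}$, where the dimension of the local contribution jumps and the Jordan structure is most delicate, as well as verifying that the generic eigenvalue shift $\beta \mapsto \beta\lambda$ (resp.\ $\beta\lambda^{-1}$ at $\infty$) is indeed the right one rather than some conjugate. The cleanest way to handle both issues is via the explicit matrix description of middle convolution due to Dettweiler--Reiter \cite{dettweiler-reiter-painleve}, which reduces the entire problem to a finite-dimensional linear-algebra computation of Jordan forms of explicitly written block matrices, where the resonance cases can be checked by computing kernels and cokernels of $\lambda A_i - \on{Id}$ restricted to each block. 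Alternatively, one can follow Katz's original approach in \cite[\S3.3]{katz-rigid}, performing the calculation using perverse sheaves and the convolution formalism on $\mathbb{A}^1$, where the local monodromy is read off from the nearby-cycles functor at each singularity.
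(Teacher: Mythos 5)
The paper does not give a proof of this proposition: it is stated with citations to Katz \cite[Corollary 3.3.6]{katz-rigid} and Dettweiler--Reiter \cite[Lemma 5.1]{dettweiler-reiter-painleve}, and no argument appears in the text. So there is no ``paper's own proof'' to compare against; the relevant question is whether your sketch is a sound outline of what those references establish.

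Your sketch does capture the essential ideas: restricting $\pi_2$ to a small disk around $x_i$, reading off the local monodromies of the fiberwise local system $\mathcal{F}_y$ (in particular the scalar $\lambda$ at $y$ and the twist $\lambda^{-1}A_\infty$ at $\infty$ are both correct), and obtaining the eigenvalue shift $\beta\mapsto\beta\lambda$ from the braiding of $y$ around $x_i$. You correctly identify the resonance cases $\beta\in\{1,\lambda^{\pm1}\}$ as the genuinely delicate part and defer to the explicit matrix calculus of Dettweiler--Reiter or to Katz's nearby-cycles computation --- which is appropriate, since the paper does precisely the same by citing those results. One small caveat on your informal language: the stalk $H^1(\mathbb{P}^1,j_*\mathcal{F}_y)$ does not literally decompose as a direct sum of ``local contributions'' indexed by the punctures (the $\dim V - \dim V^M$ terms appear in the Euler characteristic count, not a canonical splitting of $H^1$), so the way the Jordan structure is extracted from the global monodromy is a bit more involved than the sketch suggests. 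Also, your description of part (2) as ``replace $\lambda$ by $\lambda^{-1}$, which swaps the roles of $\lambda$ and $\lambda^{-1}$'' understates the asymmetry: at $\infty$ the sign of the block-size change also flips (at finite punctures $\beta=1$ gives $\ell-1$ and new eigenvalue $1$ gives $\ell+1$, whereas at $\infty$ $\beta=1$ gives $\ell+1$ and new eigenvalue $1$ gives $\ell-1$), a consequence of the extra contribution of $\alpha^*\chi$ at $\infty$. Neither of these affects the soundness of the plan, since the precise bookkeeping is offloaded to the cited references.
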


\section{A modification of Katz's algorithm}\label{section:katz}
We now present a slight variant of Katz's middle convolution algorithm, specialized to the rank $2$ case. Katz was interested in classifying rigid local systems on $\mathbb{P}^1\setminus D$; he showed that his algorithm, applied to an irreducible rigid local system of rank $r>1$, will always produce a rigid local system of rank $r'<r$. By contrast, in our setting of rank $2$ MCG-finite local systems, the algorithm will not reduce the rank of our local system in general---this would in fact imply the local system is rigid. Rather, it will produce a local system with finite monodromy, possibly with much larger rank.
\subsection{The algorithm}\label{section:thealgorithm}
Let $X=\mathbb{P}^1\setminus D$, for $D=\{x_1, \cdots, x_{n-1}, \infty\}$ some reduced effective divisor, and let $\mathbb{V}$ be an irreducible rank $2$ local system on $X$. We assume the isomonodromic deformation of $\mathbb{V}$ to a nearby $n$-pointed curve underlies a non-unitary polarizable complex variation of Hodge structure, as in \autoref{prop:not-all-unipotent}.  Following \cite[\S5.2]{katz-rigid}, our variant of Katz's algorithm proceeds as follows:
\begin{enumerate}
	\item Tensor with a rank one local system $\mathbb{L}_1$ so that $\mathbb{V}'=\mathbb{V}\otimes \mathbb{L}_1$ does not have non-trivial scalar monodromy at any of the $x_i$.
	\item Tensor with a rank one local system $\mathbb{L}_2$ so that the local monodromy of $\mathbb{V}''=\mathbb{V}'\otimes \mathbb{L}_2$ at $x_i$ has $1$ as an eigenvalue for $i=1, \cdots, n-1$.
	\item If the the local monodromy of $\mathbb{V}''$ at infinity is unipotent, relabel the points of $D$ so that this is no longer the case; this is always possible by \autoref{prop:not-all-unipotent}. Let $\nu\neq 1$ be one of the eigenvalues of the local monodromy of $\mathbb{V}''$ at $\infty$. Let $\chi$ be the rank one local system on $\mathbb{G}_m$ with local monodromy $\nu$ at $0$ and local monodromy $\nu^{-1}$ at $\infty$.
	\item Set $\mathbb{U}=MC_\chi(\mathbb{V}'')$.
\end{enumerate}
Note that as $\mathbb{V}$ underlies a polarizable complex variation of Hodge structure, the rank one local systems $\mathbb{L}_1, \mathbb{L}_2, \chi$ are all unitary. If $\mathbb{V}$ has monodromy conjugate to a representation valued $GL_r(\mathscr{O}_K)$ for some number field $K$, then $\mathbb{L}_1, \mathbb{L}_2, \chi$ are valued in $\mathscr{O}_{K'}^\times$ for some finite extension $K'/K$, and $MC_\chi(\mathbb{V}'')$ has monodromy conjugate to a subgroup of $GL_{r'}(\mathscr{O}_{K'})$.

In the rigid local systems setting, Katz iterates his procedure; we will halt after a single pass. Otherwise our variant of Katz's algorithm differs from his only in the relabeling in step (3), which is unnecessary in the rigid setting; we also have opted to avoid his use of the language of perverse sheaves.
\subsection{Unitarity of monodromy}
Throughout this section we assume $n\geq 3$. We will now begin preparations to show that Katz's algorithm, applied to a motivic rank $2$ local system on $\mathbb{P}^1\setminus D$ with $D$ generic and with infinite order at one or more punctures (or more generally, with no unitary Galois conjugates), produces a local system with finite monodromy.

With notation as in \autoref{section:notation}, let $(\pi: \mathscr{C}=\mathbb{P}^1\times \mathscr{M} \to \mathscr{M}, s_1, \cdots, s_{n+1})$ be a versal family of $(n+1)$-pointed projective lines,  let $\pi^\circ:\mathscr{C}^\circ\to \mathscr{M}$ be the associated family of punctured projective lines, and let $j: \mathscr{C}^\circ\hookrightarrow \mathscr{C}$ be the natural inclusion. Let $X$ be a general fiber of $\pi^\circ$, say over $m$, $\overline{X}=\mathbb{P}^1$ the smooth compactification, and $D=\overline{X}\setminus X$, $D=\{x_1, \cdots, x_{n+1}\}$.

We now perform the key computation in the proof:
\begin{proposition}[Key computation]\label{prop:key-computation}
Let $\mathbb{V}$ be a rank $2$ $\mathbb{C}$-local system on $\mathscr{C}^\circ$ such that $\mathbb{V}|_{X}$ is irreducible. Suppose that $\mathbb{V}$ underlies a polarizable complex variation of Hodge structure with $h^{1,0}=h^{0,1}=1$, and $h^{p,q}=0$ for $(p,q)\neq(1,0), (0,1)$. Suppose further that for $i=1, \cdots, n$, the local monodromy of  $\mathbb{V}|_{C^\circ}$ at $x_i$ has $1$ as an eigenvalue. Then $R^1\pi_*j_*\mathbb{V}$ has unitary monodromy.
\end{proposition}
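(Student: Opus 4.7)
My plan is to show that $R^1\pi_*j_*\mathbb{V}$ underlies a polarizable complex variation of Hodge structure on $\mathscr{M}$ of weight $2$ which is concentrated in Hodge bidegree $(1, 1)$. Since the polarization of such a VHS restricts to a (sign-)definite Hermitian form on its sole Hodge piece, and the underlying local system monodromy preserves this form, this forces the monodromy to be unitary. The VHS structure of weight $2 = 1 + 1$ on $R^1\pi_*j_*\mathbb{V}$ comes from a relative/parametric version of Hodge theory (e.g.~Saito's theory of mixed Hodge modules applied to $j_*\mathbb{V}$, or a parametrized version of \autoref{prop:NAHT}); by the Hodge symmetry $h^{p, q} = h^{q, p}$ supplied by the polarization, it suffices to show $h^{2, 0} = 0$.

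Fix a general fiber $X = \overline{X} \setminus D$ of $\pi^\circ$, and let $(\mathscr{E}, \nabla)$ be the Deligne canonical extension of $\mathbb{V}|_X$, with rank-one Hodge subbundle $F = F^1\mathscr{E}$. The Hodge-filtered parabolic log de Rham description realizes the $(2, 0)$-piece as a parabolic global-sections space of the form
\[
F^2 H^1\bigl(\overline{X}, j_*\mathbb{V}|_X\bigr) \;\cong\; H^0\bigl(\overline{X},\, F_\star \otimes \Omega^1_{\overline{X}}(\log D)_\star\bigr)_0,
\]
where the $\star$-decorations denote the parabolic structures inherited from the residues of $\nabla$ and the Hodge filtration on them, and the subscript $0$ is the parabolic Hom convention of \eqref{equation:parabolic-0-degree}. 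I would identify this parabolic global-sections space with $H^0$ of an honest line bundle on $\mathbb{P}^1$ and argue its degree is negative, forcing the group to vanish.

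Three ingredients feed into this vanishing. First, by \autoref{prop:NAHT}(3), the associated parabolic Higgs bundle $(F_\star \oplus (\mathscr{E}/F)_\star, \theta)$ is parabolically polystable of slope zero; irreducibility of $\mathbb{V}|_X$ together with non-unitarity (since $h^{1,0} = h^{0,1} = 1$) upgrades this to stability, and because Griffiths transversality forces $\theta$ to annihilate $\mathscr{E}/F$, the sub-bundle $(\mathscr{E}/F)_\star$ is Higgs, giving the key inequality $\mathrm{par\text{-}deg}(F_\star) > 0$. Second, the hypothesis that the local monodromy of $\mathbb{V}|_X$ at each of $x_1, \ldots, x_n$ has $1$ as an eigenvalue ensures that one of the two parabolic weights of $\mathscr{E}_\star$ at each such puncture is $0$; a case analysis on whether the Hodge line $F_{x_i}$ coincides with the $1$-eigenline or with the other eigenline then identifies the $\star$-decorated sheaf with an explicit twist of the underlying line bundle. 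Third, \autoref{prop:not-all-unipotent} applied to $x_{n+1}$ (which we may assume, after relabeling, to be a puncture where the local monodromy is non-unipotent) guarantees that at least one parabolic weight of $\mathscr{E}_\star$ at $x_{n+1}$ is strictly positive. Combining these should yield that the honest line bundle has degree at most $-1$.

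The main obstacle is carrying out the parabolic bookkeeping of the previous paragraph rigorously, and in particular converting the fractional inequality $\mathrm{par\text{-}deg}(F_\star) > 0$ into a sharp integer degree bound. The slack required to pass from ``parabolically positive'' to ``honestly negative after twisting by the appropriate log and parabolic divisors'' must come from the asymmetry between the $n$ ``nice'' punctures $x_1, \ldots, x_n$, each contributing an integer amount to the twist via the parabolic Hom convention, and the single puncture $x_{n+1}$, which contributes only a fractional positive weight via \autoref{prop:not-all-unipotent}. Pinning this down cleanly is the delicate technical heart of the argument.
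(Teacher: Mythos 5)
Your overall strategy is the same as the paper's: reduce to showing the Hodge filtration on $R^1\pi_*j_*\mathbb{V}$ is concentrated in the middle degree, use conjugation/Hodge symmetry to halve the work, and then carry out a parabolic degree computation on the Deligne canonical extension $(\mathscr{E}, F, \nabla)$ of $\mathbb{V}|_X$. The paper kills the \emph{lowest} graded piece by showing $\mathscr{E}/F=\mathscr{O}(-1)$ and hence $H^1(\mathscr{E}/F)=0$; you propose to kill the \emph{highest} piece by showing $H^0$ of a twist of $F\otimes\Omega^1_{\overline X}(\log D)$ vanishes. These are mirror images under conjugation, so that difference is cosmetic.

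The genuine gap is that the three ingredients you list do not suffice. Parabolic stability of the Higgs bundle $F_\star\oplus(\mathscr{E}/F)_\star$ gives only $\operatorname{par.deg}(F_\star)>0$, which is a \emph{lower} bound on $\deg F$. What your argument needs (and what the paper needs for $H^1(\mathscr{E}/F)=0$, i.e.\ $\deg(\mathscr{E}/F)\geq -1$) is a matching \emph{upper} bound $\deg F\leq -m+1$, where $m=\sum_i(\alpha_i+\beta_i)$ is the total parabolic weight. Nothing in ``stability + the eigenvalue-$1$ hypothesis at $x_1,\dots,x_n$ + a non-unipotent point via \autoref{prop:not-all-unipotent}'' forces such an upper bound; indeed, for \emph{non-generic} $D$ one can have $\deg F$ substantially larger than $-m+1$, and then the conclusion simply fails. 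The crucial missing ingredient is \autoref{prop:unobstructed-destabilizing-bundle}, which is where genericity of $D$ (i.e.\ $X$ being a general fiber of the versal family) actually enters, via isomonodromy and the deformation-theoretic argument of \cite[Lemma 6.4.2]{landesman2022geometric}. That proposition pins down $\underline{\Hom}(F_\star,(\mathscr{E}/F)_\star)_0=\mathscr{O}(-1)$, and the paper combines this with the counting inequality $|S|\geq -\deg F$ (coming from $\operatorname{par.deg}(F_\star)>0$ and the fact that each weight is $<1$) to obtain the sharp upper bound on $\deg F$. Your sketch also invokes \autoref{prop:not-all-unipotent} as an essential ingredient, but the paper's proof of this proposition makes no use of it --- that result is only needed earlier to justify step (3) of the algorithm, not in this computation --- and the puncture $x_{n+1}$ plays no privileged role in the degree argument beyond carrying whatever weights $\alpha_{n+1},\beta_{n+1}$ it has.
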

\begin{proof}
Let $(\mathscr{E}, \nabla)$ be the logarithmic flat vector bundle on $\overline{X}$ obtained as the Deligne canonical extension of $\mathbb{V}|_X\otimes \mathscr{O}_X$ to $\overline{X}$. Let $F\subset \mathscr{E}$ be 
the non-trivial piece of the Hodge filtration; $F$ is a line bundle by hypothesis. Set $\overline{\mathbb V}$ to be the complex conjugate of $\mathbb V$, and $(\overline{\mathscr{E}}, \overline{F}, \overline{\nabla})$ 
the corresponding filtered flat vector bundle on $\overline{X}$. Note that $\overline{\mathbb{V}}$ also satisfies the hypotheses of the Proposition.
	
	It is enough to show that the Hodge filtration on $R^1\pi_*j_*\mathbb{V}$, which is a priori of length three, in fact has length one. Indeed, in this case the polarization is a flat unitary metric.
	 
	We claim it suffices to show that $\mathscr{E}/{F}=\mathscr{O}(-1)$. 
Indexing the Hodge filtration $F^\bullet$ on $R^1\pi_*j_*\mathbb{V}$ so that it is supported in degrees $-1, 0, 1$, we have (see \cite[\S2.3]{dettweiler-sabbah} for example) that $$H^1(\mathscr{E}/{F})=\on{gr}^{-1}_{F^\bullet}R^1\pi_*j_*\mathbb{V}|_m.$$
	Now we have that $$\overline{\on{gr}^{-1}_{F^\bullet}R^1\pi_*j_*\mathbb{V}}|_m=\on{gr}^{1}_{F^\bullet}R^1\pi_*j_*\overline{\mathbb{V}}|_m.$$ In particular, if $\mathscr{E}/{F}=\overline{\mathscr{E}}/\overline{F}=\mathscr{O}(-1)$ and hence $\on{gr}^{-1}_{F^\bullet}R^1\pi_*j_*\mathbb{V}|_m=\on{gr}^{-1}_{F^\bullet}R^1\pi_*j_*\overline{\mathbb{V}}|_m=0$, the same is true for $\on{gr}^{1}_{F^\bullet}R^1\pi_*j_*\mathbb{V}|_m$, and we may conclude.
	
	We now show, via a calculation with parabolic bundles, that $\mathscr{E}/{F}=\mathscr{O}(-1)$. Let $$\theta: F\to \mathscr{E}/F\otimes \Omega^1(\log D)$$ be the $\mathscr{O}$-linear map induced by $\nabla$. By \autoref{prop:NAHT}(3), the bundle $$\on{gr}^\bullet_{F^\bullet}\mathscr{E}= F\oplus \mathscr{E}/F$$ with the Higgs field $$\begin{pmatrix} 0 & \theta \\ 0 & 0 \end{pmatrix}$$ is a parabolically stable Higgs bundle of slope zero. 
	
	As $F$ is a quotient of this Higgs bundle, it must have positive parabolic degree. Hence in particular, as it is a saturated subbundle of $\mathscr{E}$, it must be the maximal destabilizing subbundle of $\mathscr{E}$ (as a parabolic bundle). As $X$ is a general fiber of $\pi^\circ$, we have $$\underline{\Hom}(F_\star, (\mathscr{E}/F)_\star)_0=\mathscr{O}(-1)$$ by \autoref{prop:unobstructed-destabilizing-bundle}.
	
	Now let $0\leq \alpha_i\leq \beta_i < 1$ be the parabolic weights of $\mathscr{E}$ at $x_i$. By assumption $\alpha_i=0$ for $i=1, \cdots, n$. Let $$m=\sum_i (\alpha_i+\beta_i)=\alpha_{n+1} +\sum_i \beta_i.$$ 
	We have $\deg \mathscr{E}=-m$; thus it suffices to show $\deg F=-m+1.$ Note that $\deg F\geq -m+1$, as $$-\deg F-m=\deg \mathscr{E}/F\leq \on{par.deg.}(\mathscr{E}/F)_\star<0,$$ again by parabolic stability of $\on{gr}^\bullet_{F^\bullet}\mathscr{E}$. 
	
	So set $f=-\deg F$; we know $f\leq m-1$ and wish to show $f\geq m-1$. Let $S\subset \{1, \cdots, n\}$ be the set of $i$ such that $F$ has nonzero parabolic weight at $x_i$. As the parabolic degree of $F$ is positive, we must have that $|S|\geq f$. Hence $$\mathscr{O}(-1)=\underline{\Hom}(F_\star, (\mathscr{E}/F)_\star)_0\subset \underline{\Hom}(F, \mathscr{E}/F)(-D_S),$$ where $D_S=\cup_{i\in S} \{x_i\}$ has degree at least $f$. Hence we have 
	\begin{align*}
	-1 &\leq \deg 	\underline{\Hom}(F, \mathscr{E}/F)(-D_S)\\
	&= \deg \mathscr{E}/F-\deg F-\deg D_S\\
	&= 2f-m-\deg D_S\\
	&\leq f-m
	\end{align*}
	Thus $f\geq m-1$ as desired.
\end{proof}

\subsection{The main technical result}\label{section:main-result-proofs} We may now prove the main technical results of the paper.

Let $\pi^\circ: \mathscr{C}^\circ\to \mathscr{M}$ be a versal family of $n$-punctured projective lines. Let $m\in \mathscr{M}$ be a general point and let $X$ be the fiber of $\pi^\circ$ over $m$.
\begin{proposition}\label{prop:unitary-MC}
	Let $\mathbb{V}$ be a rank $2$ $\mathbb{C}$-local system on $\mathscr{C}^\circ$ such that $\mathbb{V}|_X$ is irreducible. Suppose that $\mathbb{V}$ underlies a polarizable complex variation of Hodge structure with $h^{1,0}=h^{0,1}=1$, and $h^{p,q}=0$ for $(p,q)\neq(1,0), (0,1)$. Then applying Katz's algorithm (\S4.1) to $\mathbb{V}$ yields a local system with unitary monodromy.
\end{proposition}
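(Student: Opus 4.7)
The plan is to globalize Katz's algorithm over the base $\mathscr{M}$ and reduce the statement to \autoref{prop:key-computation}.

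First I would carry out steps (1)--(3) of the algorithm relatively over $\mathscr{M}$: choose rank one local systems $\mathbb{L}_1, \mathbb{L}_2$ on $\mathscr{C}^\circ$ with the required local monodromies around the $n$ sections of $\pi$. Since $\mathbb{V}$ underlies a polarizable VHS with $h^{1,0}=h^{0,1}=1$, these $\mathbb{L}_i$ may be taken unitary, so the operations $\mathbb{V} \mapsto \mathbb{V} \otimes \mathbb{L}_i$ preserve all the hypotheses of the proposition. Step (3) invokes \autoref{prop:not-all-unipotent} to ensure, after relabeling, that the monodromy at $\infty$ is non-unipotent, and selects an eigenvalue $\nu \neq 1$ there, which in turn determines the unitary rank one local system $\chi$ on $\mathbb{G}_m$.

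Next I would globalize step (4). Form the $(n+1)$-pointed family $p_2 : \mathscr{C}^{(2)} := \mathscr{C}^\circ \times_\mathscr{M} \mathscr{C} \to \mathscr{C}^\circ$, whose sections are the $n$ pullbacks of the sections of $\pi$ together with the diagonal $\mathscr{C}^\circ \hookrightarrow \mathscr{C}^\circ \times_\mathscr{M} \mathscr{C}$; this family is versal because $\mathscr{M}_{0,n+1}$ is itself the universal $n$-punctured family over $\mathscr{M}_{0,n}$, so the induced map $\mathscr{C}^\circ \to \mathscr{M}_{0,n+1}$ is dominant. Letting $\mathscr{U} = \mathscr{C}^\circ \times_\mathscr{M} \mathscr{C}^\circ \setminus \Delta$, with inclusion $J : \mathscr{U} \hookrightarrow \mathscr{C}^{(2)}$, projections $p_1, p_2$ to $\mathscr{C}^\circ$, and relative difference map $\alpha : \mathscr{U} \to \mathbb{G}_m$, consider the rank $2$ local system $\mathbb{V}^\sharp := p_1^* \mathbb{V}'' \otimes \alpha^* \chi$ on $\mathscr{U}$. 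Proper base change identifies the restriction of $R^1 (p_2)_* J_* \mathbb{V}^\sharp$ to a fiber $X$ of $\pi^\circ$ with $MC_\chi(\mathbb{V}''|_X) = \mathbb{U}$.

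I would then apply \autoref{prop:key-computation} to $\mathbb{V}^\sharp$ on the versal $(n+1)$-pointed family $p_2$. Rank two, fibrewise irreducibility, and the polarizable VHS structure with $h^{1,0}=h^{0,1}=1$ are immediate (the last because $\chi$ is unitary). The heart of the verification is local monodromy bookkeeping around the $n+1$ boundary divisors: around each finite section $x_i \in D \setminus \{\infty\}$, the factor $p_1^*\mathbb{V}''$ carries the local monodromy of $\mathbb{V}''$ at $x_i$ (which has $1$ as an eigenvalue by step (2)) while $\alpha^*\chi$ is locally trivial; around the $\infty$-section, $\alpha^*\chi$ contributes the scalar $\nu^{-1}$, and since $\nu$ is an eigenvalue of $\mathbb{V}''$ at $\infty$, the tensor again has $1$ as an eigenvalue; around the diagonal $\Delta$, $p_1^*\mathbb{V}''$ has trivial monodromy (a loop around $(y_0,y_0)$ projects under $p_1$ to a small loop about the non-puncture point $y_0 \in \mathscr{C}^\circ$, which is contractible) while $\alpha^*\chi$ has scalar monodromy $\nu$. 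Thus the diagonal plays the role of the $(n+1)$-th point, on which \autoref{prop:key-computation} imposes no hypothesis, and the proposition yields that $R^1(p_2)_* J_* \mathbb{V}^\sharp$ has unitary monodromy on $\mathscr{C}^\circ$; restricting to $X$ yields the conclusion for $\mathbb{U}$.

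The main obstacles I anticipate are the careful local monodromy analysis around the moving puncture $\Delta$, and verifying that the relative construction on $\mathscr{U}$ genuinely globalizes Katz's fibrewise middle convolution in a VHS-compatible way, so that proper base change really recovers $\mathbb{U}$ on restriction to $X$.
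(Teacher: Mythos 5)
Your proposal is correct and follows essentially the same route as the paper's proof: globalize middle convolution over $\mathscr{M}$ by forming $\mathscr{U} = \mathscr{C}^\circ\times_\mathscr{M}\mathscr{C}^\circ\setminus\Delta$, observe that the second projection yields a versal $(n+1)$-punctured family, and apply \autoref{prop:key-computation} to $p_1^*\mathbb{V}''\otimes\alpha^*\chi$ after checking the two hypotheses, with the diagonal playing the role of the $(n+1)$-th point. The local monodromy bookkeeping you carry out (eigenvalue $1$ at the finite sections by step~(2), at $\infty$ by the choice of $\chi$, and scalar $\nu$ at $\Delta$) matches the paper's argument exactly.
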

\begin{proof}
	Without loss of generality, we may assume $n\geq 3$. Indeed, if $n<3$, then $\pi_1(\mathbb{P}^1\setminus D)$ is abelian, so there are no irreducible local systems of rank $2$ over $\mathbb{P}^1\setminus D$.

	We first apply steps (1) and (2) of Katz's algorithm, obtaining a local system $\mathbb{V}''$ with no non-trivial scalar monodromies, and with $1$ as an eigenvalue at each point of $D\setminus\{\infty\}$. Let $\chi$ be as in step (3) of Katz's algorithm. The main idea of the proof is to perform middle convolution \emph{in families}.
	
	We consider $\mathscr{U}=\mathscr{C}^\circ\times_\mathscr{M} \mathscr{C}^\circ\setminus \Delta$, where $\Delta$ is the diagonal, and let $j: \mathscr{U}\hookrightarrow \mathbb{P}^1\times \mathscr{C}^\circ$ be the natural inclusion. Let $\pi_1: \mathscr{U}\to \mathscr{C}^\circ$ be the projection onto the first coordinate, and $\pi_2:\mathbb{P}^1\times \mathscr{C}^\circ\to \mathscr{C}^\circ$ the projection onto the second coordinate. Note that $\pi_2\circ j$ is a versal family of $(n+1)$-punctured projective lines. As in \autoref{section:mc}, there is a map $\alpha: \mathscr{U}\to \mathbb{G}_m$ given fiberwise by $(x, y)\mapsto x-y$.
	
	We claim the versal family $\pi_2\circ j$, with the local system $\pi_1^*\widetilde{\mathbb{V}}\otimes \alpha^*\chi$, satisfies the hypotheses of \autoref{prop:key-computation}. This suffices because the restriction of $R^1\pi_{2*}j_*(\pi_1^*\widetilde{\mathbb{V}}\otimes \alpha^*\chi)$ to $X$ is precisely the output of Katz's algorithm. We need to check that the restriction of $\pi_1^*\widetilde{\mathbb{V}}\otimes \alpha^*\chi$ to a fiber of $\pi_2\circ j$ satisfies:
	\begin{enumerate}
		\item $h^{1,0}=h^{0,1}=1$, and
		\item has $1$ as an eigenvalue of at least $n$ of the $n+1$ local monodromy matrices.
	\end{enumerate} 	
	
\begin{figure}[h!]\label{figure:zoomed}
\centering{
\resizebox{80mm}{!}{
\begin{overpic}[trim={2cm 15cm 9cm 0}, clip]{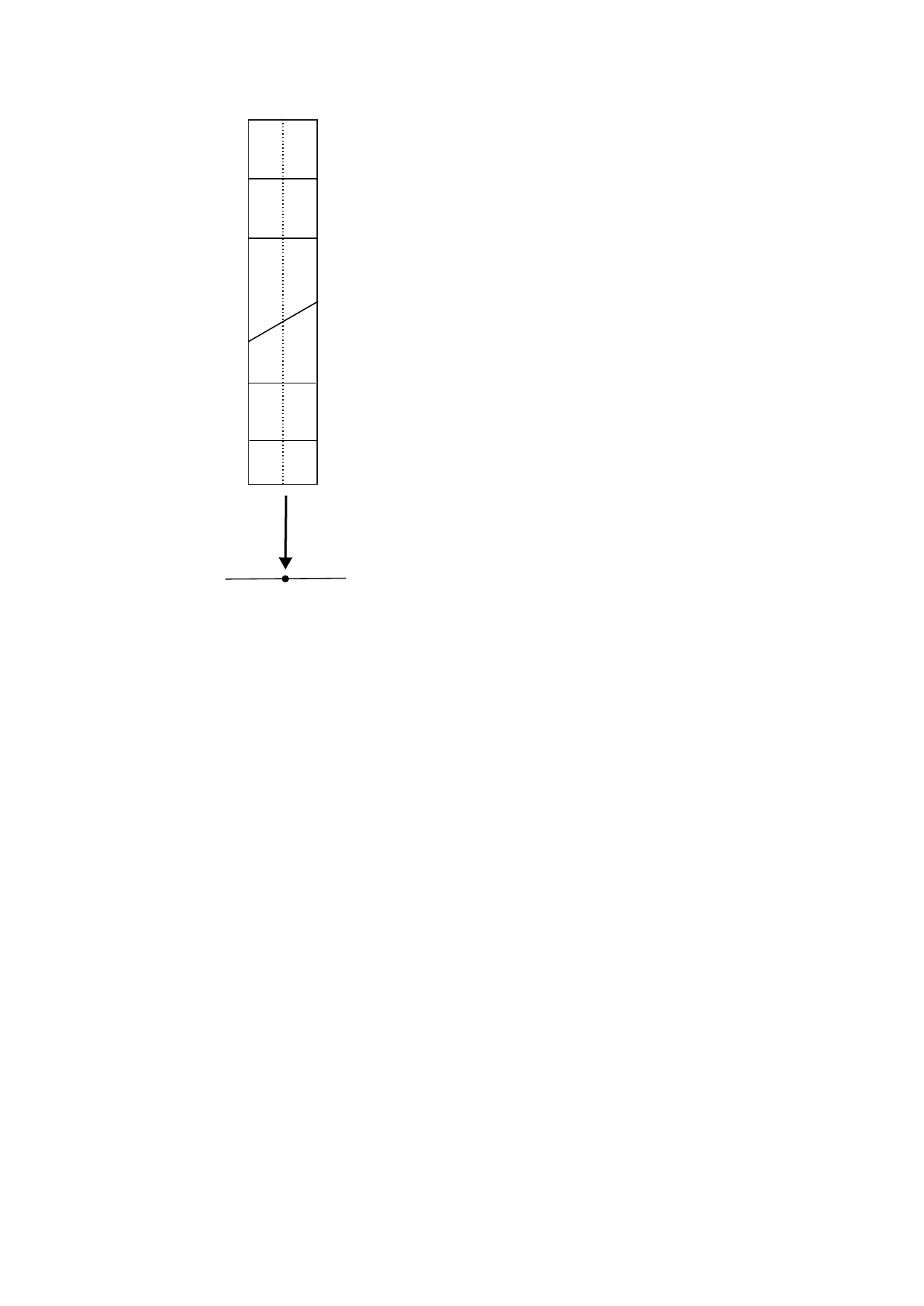}
\put(30,8){$y$}
\put(38, 31){$(1, \lambda_1)$}
\put(38, 41){$(1, \lambda_2)$}
\put(38, 53){$(\nu ,\nu)$}
\put(38, 63){$(1, \lambda_3)$}
\put(38, 72){$(1, \lambda_\infty)$}
\put(21, 31){$x_1$}
\put(21, 41){$x_2$}
\put(21, 46){$\Delta$}
\put(21, 63){$x_3$}
\put(21, 72){$\infty$}
\put(32, 17){$\pi_2\circ j$}
\end{overpic}	
}
\caption{A neighborhood of the fiber of $\pi_2\circ j: X\times X\setminus \Delta\to X$ over $y$, with eigenvalues of the local monodromy matrices of $\pi_1^*\widetilde{\mathbb V}\otimes \alpha^*\chi$ indicated on the right of the diagram.}\label{figure:zoomed}
\label{figure:Hodge-diagram}
}
\end{figure}
	
	Point (1) above is immediate from the fact that the same is true for $\mathbb{V}$, by hypothesis; tensoring with unitary rank one local systems cannot change this property. (As $\mathbb{V}$ underlies a polarizable complex variation and hence the eigenvalues of its local monodromy matrices have absolute value $1$, the local systems $\mathbb{L}_1, \mathbb{L}_2, \chi$ are necessarily unitary.) 
	
	Point (2) follows from the choices in Katz's algorithm, as we now explain; the reader may find it useful to refer to \autoref{figure:zoomed}. It suffices to check point (2) on any fiber of $\pi_2\circ j$. Consider $y\in X\subset \mathscr{C}^\circ$; by construction $X'=(\pi_2\circ j)^{-1}(y)$ is isomorphic to $X\setminus \{y\}$; for notational convenience, we choose this fiber. The local monodromy of $\pi_1^*\widetilde{\mathbb{V}}\otimes \alpha^*\chi|_{X'}$ at all points of $D$ except $\infty$ has $1$ as an eigenvalue by step (2) of Katz's algorithm. $X'$ has two remaining punctures: $y$ and $\infty$. The local monodromy at $y$ is scalar, so we must show the local monodromy at $\infty$ has $1$ as an eigenvalue. But this is immediate from our choice of $\chi$ in step (3) of Katz's algorithm.
\end{proof}

\begin{theorem}\label{thm:main-technical-thm}
	Let $D\subset \mathbb{P}^1$ be a generic effective divisor of degree $n$ containing $\infty$, and let $\mathbb{V}$ be a rank $2$ $\mathscr{O}_K$-local system on $X=\mathbb{P}^1\setminus D$ with non-scalar monodromy at $\infty$, such that for each embedding $\iota: \mathscr{O}_K\hookrightarrow \mathbb{C}$, $\mathbb{V}\otimes_\iota \mathbb{C}$ is irreducible and underlies a non-unitary polarizable complex variation of Hodge structure. Then applying Katz's algorithm (\autoref{section:thealgorithm}) to $\mathbb{V}$ yields a local system with finite monodromy.
\end{theorem}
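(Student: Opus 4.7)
The plan is to apply \autoref{prop:unitary-MC} after spreading each complex base change of $\mathbb{V}$ to a versal family, and then to combine integrality with unitarity at every archimedean place to conclude finiteness.

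Fix a complex embedding $\iota:\mathscr{O}_K\hookrightarrow\mathbb{C}$ and let $\mathbb{V}_\iota:=\mathbb{V}\otimes_\iota\mathbb{C}$. By hypothesis $\mathbb{V}_\iota$ is irreducible and underlies a non-unitary polarizable CVHS of rank 2, which forces $h^{1,0}=h^{0,1}=1$ after shifting indices. Integrality of $\mathbb{V}$, together with the genericity of $D$ and the results of Corlette--Simpson invoked in \autoref{rmk:geometric-origin}, imply that $\mathbb{V}_\iota$ is of geometric origin. Hence the underlying family extends to a family over some versal $\pi^\circ:\mathscr{C}^\circ\to\mathscr{M}$ of $n$-punctured projective lines (with $X$ a fiber over $m\in\mathscr{M}$), giving an extension $\widetilde{\mathbb{V}}_\iota$ of $\mathbb{V}_\iota$ still underlying a polarizable CVHS with the same Hodge numbers. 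Applying \autoref{prop:unitary-MC} to $\widetilde{\mathbb{V}}_\iota$, the output of Katz's algorithm applied to $\mathbb{V}_\iota$ has unitary monodromy.

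Katz's algorithm (\autoref{section:thealgorithm}) is defined purely algebraically: it consists of tensoring with rank-one local systems determined by local-monodromy eigenvalues, and of taking a middle convolution $R^1\pi_{2*}j_*$ of a tensor product of pullbacks. Both operations preserve the ring of definition up to a finite enlargement, and all choices can be made Galois-equivariantly from the $\mathscr{O}_K$-data. Thus the algorithm applied to $\mathbb{V}$ produces an $\mathscr{O}_{K'}$-local system $\mathbb{U}$ on $X$ for some finite extension $K'/K$, with the property that for every embedding $\iota':\mathscr{O}_{K'}\hookrightarrow\mathbb{C}$, the base change $\mathbb{U}\otimes_{\iota'}\mathbb{C}$ coincides with the Katz output of $\mathbb{V}\otimes_{\iota'|_K}\mathbb{C}$, and is therefore unitary by the previous paragraph.

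The monodromy of $\mathbb{U}$ thus lies in the subgroup of $\GL_{r'}(\mathscr{O}_{K'})$ whose image under every archimedean embedding of $K'$ is unitary. This subgroup sits as a discrete subgroup inside the compact group $\prod_{v\mid\infty}U(r')$, and must therefore be finite. The step I expect to be the main obstacle is the initial spreading-out: establishing rigorously that $\mathbb{V}_\iota$'s CVHS structure extends from $X$ to a family over $\mathscr{C}^\circ$ (rather than merely isomonodromically deforming as a local system). This is precisely where the integrality hypothesis is essential, via the Corlette--Simpson classification of integral rank-2 CVHS as being of geometric origin, and it is what distinguishes our situation from that of \autoref{prop:not-all-unipotent}, where the existence of the CVHS on the family must be hypothesized.
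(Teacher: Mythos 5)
Your proof is correct and follows essentially the same route as the paper's: invoke Corlette--Simpson to spread $\mathbb{V}$ out over a versal family, apply \autoref{prop:unitary-MC} embedding-by-embedding to get unitarity of the Katz output at every archimedean place, then combine with integrality over $\mathscr{O}_{K'}$ to deduce finiteness. The only cosmetic differences are that the paper performs the spreading-out once for the $\mathscr{O}_K$-local system (rather than embedding-by-embedding) and delegates the final ``discrete in a compact group'' step to \cite[Lemma 7.2.1]{landesman2022geometric} instead of spelling it out.
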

\begin{proof}
Without loss of generality, we may assume $n\geq 3$. Indeed, if $n<3$, then $\pi_1(\mathbb{P}^1\setminus D)$ is abelian, so there are no irreducible local systems of rank $2$ over $\mathbb{P}^1\setminus D$.

	By Corlette-Simpson \cite[Theorem 2 and \S9]{corlette-simpson}, $\mathbb{V}$ is of geometric origin, and hence, spreading out, there exists a versal family of $n$-punctured projective lines $\mathscr{C}^\circ \to \mathscr{M}$, containing $X$ as a fiber, with $\mathscr{C}^\circ\subset \mathbb{P}^1\times\mathscr{M}$, and an $\mathscr{O}_K$-local system $\widetilde{\mathbb{V}}$ on $\mathscr{C}^\circ$ restricting to $\mathbb{V}$ on $X$.
	
	Choosing $\mathbb{L}_1, \mathbb{L}_2, \chi$ as in Katz's algorithm (\autoref{section:thealgorithm}), we have by construction that $\mathbb{L}_1, \mathbb{L}_2, \chi$ are valued in $\mathscr{O}_{K'}^\times$, for $K'$ some finite extension of $K$.  Now by hypothesis, for each embedding $\iota: \mathscr{O}_{K'}\hookrightarrow \mathbb{C}$, $\widetilde{\mathbb{V}}\otimes_\iota \mathbb{C}$ satisfies the hypotheses of \autoref{prop:unitary-MC} (as it is non-unitary). Hence for each $\iota$, the output of Katz's algorithm (\autoref{section:thealgorithm}) is unitary and defined over $\mathscr{O}_{K'}$, whence it has finite monodromy by e.g.~\cite[Lemma 7.2.1]{landesman2022geometric}. 
\end{proof} 
\begin{proposition}[Output of Katz's algorithm] \label{prop:katz-output}
	With hypotheses as in \autoref{thm:main-technical-thm}, let $D'\subset D$ be the set of points at which $\mathbb{V}$ has non-scalar monodromy. Then the local system $\mathbb{U}$ produced by Katz's algorithm (\autoref{section:thealgorithm}) has the following properties:
	\begin{enumerate}
		\item $\rk \mathbb{U}=|D'|-2$
		\item The monodromy group of $\mathbb{U}$ is a finite complex reflection group.
		\item The local monodromy of $\mathbb{U}$ at each $x_i\in D'\setminus \{\infty\}$ is a pseudoreflection. At each point of $D\setminus D'$, the local monodromy is trivial.
		\item The local monodromy of $\mathbb{U}$ at $\infty$ is $\nu^{-1} R$, where $\nu^{-1}$ is the local monodromy at $\infty$ of the local system $\chi$ on $\mathbb{G}_m$ chosen in Katz's algorithm, and $R$ is a pseudoreflection.
	\end{enumerate}
\end{proposition}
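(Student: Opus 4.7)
The plan is a direct case analysis using \autoref{prop:local-monodromies}, combined with the rank formula for middle convolution and finiteness from \autoref{thm:main-technical-thm}. First I would catalog the local monodromies of $\mathbb{V}''$, the local system produced after steps (1) and (2) of Katz's algorithm (\autoref{section:thealgorithm}). At points of $D \setminus D'$, where $\mathbb{V}$ originally has scalar monodromy, step (1) renders $\mathbb{V}''$ trivial at these points. At points $x_i \in D' \setminus \{\infty\}$, step (2) arranges that $\mathbb{V}''|_{x_i}$ has $1$ as an eigenvalue, while step (1) ensures non-scalarness; hence $\mathbb{V}''|_{x_i}$ is conjugate either to $J(1,2)$ or to $\operatorname{diag}(1, \beta_i)$ with $\beta_i \neq 1$. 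At $\infty$, by step (3), the local monodromy is non-scalar with one eigenvalue equal to $\nu \neq 1$, so conjugate to $J(\nu, 2)$ or to $\operatorname{diag}(\nu, \mu)$ with $\mu \neq \nu$.

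Next I would apply \autoref{prop:local-monodromies} with $\lambda = \nu$ to transfer this data to $\mathbb{U}$. At points of $D \setminus D'$, both $J(1,1)$ blocks become $J(\nu, 0)$, i.e., empty; the remaining blocks are $J(1,1)$'s, so the local monodromy is trivial. At each $x_i \in D' \setminus \{\infty\}$, exactly one non-trivial Jordan block survives: either $J(\nu, 1)$ (if $\mathbb{V}''|_{x_i} \sim J(1,2)$), $J(\beta_i \nu, 1)$ (if $\beta_i \neq \nu^{-1}$), or $J(1, 2)$ (if $\beta_i = \nu^{-1}$). Padding out with $J(1,1)$ blocks, in every subcase the local monodromy $g$ satisfies $\operatorname{rk}(g - I) = 1$, i.e., is a pseudoreflection, yielding (3). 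At $\infty$, an analogous analysis (with subcases $A_\infty \sim J(\nu, 2)$, $\operatorname{diag}(\nu, 1)$, or $\operatorname{diag}(\nu, \mu)$ with $\mu \neq 1, \nu$) shows that after pulling out the common scalar $\nu^{-1}$ arising from the filler blocks $J(\nu^{-1}, 1)$, the residual matrix $R$ is a pseudoreflection. This gives (4).

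For (1), I would invoke the rank formula for middle convolution (cf.~\cite[Ch.~3]{katz-rigid}, \cite[Lemma 5.1]{dettweiler-reiter-painleve}):
\[ \rk \mathbb{U} = (n-1)\,\rk \mathbb{V}'' - \sum_{i < n} \dim \ker(A_i - I) - \dim \ker(A_\infty - \nu I), \]
with $n = |D|$ and $A_i$ the local monodromies of $\mathbb{V}''$. Substituting $\rk \mathbb{V}'' = 2$; $\dim \ker(A_i - I) = 1$ for $x_i \in D' \setminus \{\infty\}$; $\dim \ker(A_i - I) = 2$ for $x_i \in D \setminus D'$; and $\dim \ker(A_\infty - \nu I) = 1$ (since $\nu$ is a simple eigenvalue of the non-scalar $A_\infty$, whether $A_\infty$ is diagonalizable or a single Jordan block) yields $\rk \mathbb{U} = |D'| - 2$, confirming (1). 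Finally, (2) follows by combining (3), (4), and finiteness: by \autoref{thm:main-technical-thm} the monodromy image of $\mathbb{U}$ is finite; by (3) it contains a pseudoreflection for each point of $D' \setminus \{\infty\}$; and by the product relation $\prod_i A_i = I$ (writing $A_\infty$ as the inverse product of the other local monodromies, most of which are trivial) the element $\nu^{-1} R$ lies in the subgroup generated by these pseudoreflections. Hence the monodromy group is a finite group generated by pseudoreflections, i.e., a finite complex reflection group.

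The main obstacle is the careful case analysis in the middle step, especially tracking the degenerate subcases ($\beta_i = \nu^{-1}$ at a finite puncture, or $\mu = 1$ or $A_\infty$ non-semisimple at infinity). These degenerate configurations are constrained by the determinant/trace relations among local monodromies (and in fact certain subcases are incompatible with specific values of $|D'|$), so a bit of bookkeeping is required to confirm that every admissible configuration produces a pseudoreflection as claimed; once this is done, (3), (4), (1), and (2) all fall out as above.
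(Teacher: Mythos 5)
Your proof is correct and follows essentially the same route as the paper: parts (1), (3), and (4) are read off from Katz's rank formula and \autoref{prop:local-monodromies}, and (2) follows from (3) together with the finiteness of the monodromy of $\mathbb{U}$ guaranteed by \autoref{thm:main-technical-thm}. The one place where the paper is slicker is in the case analysis for (4): rather than enumerating subcases (including the degenerate ones, such as $A_\infty$ of $\mathbb{V}''$ conjugate to $\operatorname{diag}(\nu,1)$, which produces a nilpotent block $J(\nu^{-1},2)$ after middle convolution), the paper simply observes that since $\mathbb{U}$ has finite monodromy, all Jordan blocks of its local monodromies have size $1$, which instantly collapses the local monodromy at $\infty$ to a diagonal matrix of the form $\nu^{-1}\cdot(\text{diagonal pseudoreflection})$. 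This also quietly rules out the non-semisimple subcases you track by hand; you are right that even these degenerate subcases would still yield a (non-diagonalizable) pseudoreflection under the paper's definition, but invoking semisimplicity up front saves the bookkeeping and avoids any appearance of producing an infinite-order ``pseudoreflection.'' Similarly, for (3), the paper implicitly uses the same semisimplicity observation to dismiss the subcase $\beta_i=\nu^{-1}$ (which would give $J(1,2)$), a case you note would still be a rank-one perturbation of the identity but which simply cannot occur. In short: same proof, but the paper's appeal to semisimplicity streamlines the case-by-case verification you carry out explicitly.
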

\begin{proof}
	Point (1) is immediate from the computation \cite[Corollary 3.3.6]{katz-rigid}
	of Katz of the rank of the middle convolution (see also \cite[Theorem
	2.2]{dettweiler-reiter-painleve}). Point (2) follows from (3) and
	\autoref{thm:main-technical-thm}. Point (3) follows directly from
	\autoref{prop:local-monodromies}(1). Finally, (4) follows from \autoref{prop:local-monodromies}(2), noting that by \autoref{thm:main-technical-thm}, the local system $\mathbb{U}$ has finite monodromy and hence all Jordan blocks have size $1$.
\end{proof}
Note that middle convolution is invertible by \autoref{prop:basic-props}, so \autoref{thm:main-technical-thm} tells us that $\mathbb{V}=MC_{\chi}(\mathbb{U})\otimes\mathbb{L}$, for appropriate rank one local systems $\chi$ on $\mathbb{G}_m$ and $\mathbb{L}$ on $X$, and $\mathbb{U}$ a local system on $X$ satisfying the conclusions of \autoref{prop:katz-output}.
\begin{remark}
The proof of \autoref{prop:unitary-MC} in fact shows that applying Katz's algorithm (\autoref{section:thealgorithm}) to irreducible \emph{supra-maximal} local systems \cite{deroin2019supra} yield 
a local systems with unitary monodromy. 
In our language, irreducible supra-maximal local systems have the following properties: they are rank $2$ local systems with trivial determinant, underlying a polarizable $\mathbb{C}$-VHS $(F, \mathscr{E}, \nabla)$ such that $\underline{\Hom}(F_\star, (\mathscr{E}/F)_\star)_0=\mathscr{O}(-1)$ (combining \cite[Theorem 5]{deroin2019supra} and \autoref{prop:unobstructed-destabilizing-bundle}). This answers a question of Deroin and Tholozan; we are grateful to them for pointing out this consequence of our work. This gives a different proof of the compactness of the space of supra-maximal representations, which is  \cite[Corollary 3]{deroin2019supra}.
\end{remark}
\subsection{Proofs of \autoref{thm:main-thm-intro} and \autoref{cor:main-classification}}\label{section:intro-proofs}
\begin{proof}[Proof of \autoref{thm:main-thm-intro}]
Let $\mathbb{V}$ be a local system as in the statement of \autoref{thm:main-thm-intro}. It suffices by \autoref{thm:main-technical-thm} and \autoref{prop:katz-output} to show that $\mathbb{V}$ satisfies the hypotheses of \autoref{thm:main-technical-thm}. As $\mathbb{V}$ is of geometric origin, its local monodromy matrices are quasi-unipotent, so the assumption that one of them has infinite order implies it is not diagonalizable. Hence no Galois conjugate of $\mathbb{V}$ can be unitary, as desired.

Now \autoref{thm:main-technical-thm} and \autoref{prop:katz-output} tell us that applying Katz's algorithm to $\mathbb{V}$ yields a rank one finite order local system $\mathbb{L}$ on $\mathbb{P}^1\setminus D$ and a non-trivial finite order rank one local system $\chi'$ on $\mathbb{G}_m$ such that $\mathbb{U}:=MC_{\chi'}(\mathbb{V}\otimes \mathbb{L})$ has monodromy a finite complex reflection group. Hence we have $\mathbb{V}=MC_{\chi}(\mathbb{U})\otimes \mathbb{M}$, where $\mathbb{M}=\mathbb{L}^{-1}$ and $\chi=(\chi')^{-1}$.
\end{proof}
\begin{proof}[Proof of \autoref{cor:main-classification}]
	We recall the setup from the introduction. Let $$\rho:
	\pi_1(\Sigma_{0,n})\to \SL_2(\mathbb{C})$$ be a Zariski-dense, MCG-finite representation, with at least one local monodromy matrix of infinite order. Without loss of generality we may assume $n\geq 3$, as if $n<3$, no Zariski-dense $\rho$ exist, since $\pi_1(\Sigma_{0,n})$ is abelian.
	
	By \cite[Corollary 2.3.5]{landesman2022geometric}, there exists a versal family of $n$-punctured projective lines $$\pi^\circ: \mathscr{C}^\circ\to \mathscr{M},$$ and a local system $\mathbb{V}$ on $\mathscr{C}^\circ$, such that the restriction of $\mathbb{V}$ to a fiber of $\pi^\circ$ has monodromy given by $\rho$. Suppose $\mathbb{V}$ does not have quasi-unipotent local monodromy at infinity. Then by \cite[Theorem A]{loray-etc}, $\mathbb{V}$ is pulled back through some map $\mathscr{C}^\circ \to Y$, with $Y$ a Deligne-Mumford curve, that is, we are in case (1) of the corollary, classified by Diarra \cite[\S3-\S5]{diarra}.
	
	Thus we may suppose $\mathbb{V}$ has quasi-unipotent monodromy at infinity. In this case, if $\mathbb{V}$ is not pulled-back through a map to a Deligne-Mumford curve, it is, by \cite[Theorem 2]{corlette-simpson}, of geometric origin. Hence its restriction to a general fiber of $\pi^\circ$ satisfies the hypotheses of \autoref{thm:main-thm-intro}, and we may conclude.
\end{proof}

\section{Finite complex reflection groups}\label{section:fcrg}
In this section we analyze the finite complex reflection groups that may arise from Katz's algorithm \autoref{section:thealgorithm}, using the Shephard-Todd classification. 
\subsection{Shephard-Todd classification}
When we refer to a complex reflection group, we will always view it as realized on a complex vector space $V$; we generally denote this representation by $\rho$, and by the trace field of $G$ we mean the field generated by the character values of $\rho$. We recall the following examples:
\begin{definition}
Let $m, N$ be positive integers. An $N\times N$ matrix is \emph{monomial} if it has precisely one non-zero entry in each row and column. The group $G(m, 1, N)\subset \GL_N(\mb{C})$ is the group of monomial matrices whose non-zero entries   are all $m$-th roots of unity. 

For a positive  integer $p$ dividing $m$, $G(m,p,N)\subset G(m,1,N)$ is the subgroup consisting of the matrices such that the product of all non-zero entries is an $m/p$-th root of unity. The representation $G(m,p,N)\subset \GL_N(\mb{C})$ is irreducible except for  $G(1,1,N)$ and $G(2,2,2)$ (see \cite[Proposition 2.10]{lehrertaylor}). 
\end{definition}
\begin{remark}\label{rmk:weyl-groups-Gmpn}
Note that $G(1,1,n)=W(A_n), G(2,1,n)=W(B_n)=W(C_n), G(2,2,n)=W(D_n)$, the Weyl groups of the infinite series of Dynkin diagrams, and $G(m,m,2)$ is the dihedral group of order $2m$. 
\end{remark}
We recall the Shephard-Todd classification of complex reflection groups. 

\begin{theorem}[{\cite{shephard-todd}, \cite[Theorem 8.29]{lehrertaylor}}]
Suppose $G\subset \GL_N(\mb{C})$ is an irreducible complex reflection group. Then it belongs to one of the following classes (some of which have overlaps):
\begin{enumerate}
\item $G=G(m,p,N)$ for some $m, p$;
\item $N=2$ and $G$ is one of 19 examples;
\item $G$ is the Weyl group of a semisimple Lie algebra (which we refer to as \emph{classical Weyl groups}), and $G\subset \GL_N(\mb{C})$ is the standard representation, i.e. the representation on the corresponding complexified root lattice;
\item $G$ is one of the following:
\[
W(H_3), W(J_3^{(4)}), W(J_3^{(5)}), W(L_3), W(M_3), W(H_4), W(L_4), W(N_4), W(O_4), W(K_5), W(K_6).
\]
\end{enumerate}
For the last class, $W(X)$ denotes the Weyl group of the complex root system $X$, and the subscript refers to the rank of the complex reflection group, i.e. the dimension of the vector space on which it is realized.
\end{theorem}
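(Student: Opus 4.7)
The plan is to follow the original strategy of Shephard--Todd, splitting the classification according to whether the $G$-action on $V$ is primitive. Recall that $G$ acts imprimitively if $V$ admits a non-trivial direct sum decomposition $V = V_1 \oplus \cdots \oplus V_k$ with $G$ permuting the summands transitively, and primitively otherwise. The irreducibility hypothesis guarantees that this dichotomy is well-posed, and the whole argument proceeds by treating each side separately.

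First I would address the imprimitive case. If $G$ is irreducible and imprimitive with system of imprimitivity $\{V_1, \ldots, V_k\}$, one shows that each $V_i$ must be one-dimensional: a pseudoreflection must either permute the $V_i$ non-trivially or act within a single $V_i$, and if $\dim V_i \geq 2$ no collection of such pseudoreflections can generate a subgroup that acts irreducibly on the whole of $V$ while respecting the decomposition. Once each $V_i$ is one-dimensional, $G$ is conjugate into the group of monomial matrices. The pseudoreflections in a monomial group fall into two types (diagonal reflections $\mathrm{diag}(1,\ldots,1,\zeta,1,\ldots,1)$ and ``swap'' reflections supported on a single $2\times 2$ block), and an analysis of the subgroups generated by such matrices -- subject to irreducibility -- recovers precisely the family $G(m,p,N)$.

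For the primitive case, the main tool is the Chevalley--Shephard--Todd theorem: the invariant ring $\mb{C}[V]^G$ is a polynomial algebra, freely generated by homogeneous polynomials of degrees $d_1 \leq \cdots \leq d_N$ satisfying $\prod_i d_i = |G|$ and $\sum_i (d_i - 1) = \#\{\text{pseudoreflections in } G\}$. Combined with the constraint that $G$ is a primitive finite subgroup of $\GL_N(\mb{C})$, these numerical identities sharply restrict the possibilities. In rank $N=2$, I would use the classical classification of finite subgroups of $\GL_2(\mb{C})$ via their projective images in $\mathrm{PGL}_2(\mb{C})$ (cyclic, dihedral, tetrahedral, octahedral, icosahedral), analyze central extensions, and check in each case which are generated by pseudoreflections; this yields the $19$ exceptional examples of Shephard--Todd. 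For rank $N \geq 3$, a theorem of Blichfeldt (or, more modernly, Collins' refinements) enumerates the primitive finite subgroups of $\GL_N(\mb{C})$, and combining this list with the degree constraints above picks out exactly the classical Weyl groups in their reflection representations together with the exceptional groups listed in (4).

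The main obstacle is the rank $2$ case, where no structural dichotomy takes over and the $19$ exceptional groups essentially have to be enumerated directly. A secondary difficulty is, in rank $\geq 3$, verifying that each candidate primitive group produced by Blichfeldt's classification is actually generated by its pseudoreflections (rather than merely containing some) and that the list is exhaustive; this is what consumes most of Chapter 8 of Lehrer--Taylor. One could try to streamline this by using modern tools -- invariants like the Molien series to pin down the degrees, or the classification of complex reflection arrangements -- but at the end of the day any proof must reproduce the Shephard--Todd list, and the case analysis in low rank is unavoidable. For the present paper, the classification is used purely as a black box to constrain the possible monodromy groups arising from Katz's algorithm, so one may safely invoke it as cited.
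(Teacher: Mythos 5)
The paper does not give its own proof of this theorem; it is invoked purely as a cited result (Shephard--Todd and Lehrer--Taylor, Theorem 8.29), exactly as you note at the end of your proposal. Your sketch of the primitive/imprimitive dichotomy, the reduction of the imprimitive case to monomial groups $G(m,p,N)$, and the use of Chevalley--Shephard--Todd degree constraints together with Blichfeldt-type enumerations of primitive groups is a faithful summary of the strategy in the cited references, so you are describing the proof behind the black box rather than an alternative route. The one place worth tightening if you were to flesh this out: the reduction to one-dimensional blocks in the imprimitive case is best phrased not as ``no such collection generates an irreducible group'' but via the observation that a pseudoreflection permuting the blocks must interchange exactly two of them, and its fixed space then has codimension equal to the block dimension, forcing that dimension to be $1$.
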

Our main goal in \autoref{section:fcrg} is to prove:
\begin{corollary}\label{gmp5}
For $N\geq 5$, the complex reflection group $G(m,p,N)\subset \GL_N(\mb{C})$ does not arise as the output of the Katz algorithm of \autoref{thm:main-technical-thm}; the same is true for $W(J_3^{(4)}), $ $W(H_4),$ $W(K_5),$ $W(K_6),$ $W(E_6),$ $W(E_7),$ $W(E_8)$. In particular, by the Shephard-Todd classification, there are no irreducible rank two motivic local systems $\mb{V}$ on $\mb{P}^1\setminus D$ with
\begin{enumerate}
\item no unitary Galois conjugates, and 
\item  with $D$ generic, $|D'|\geq 7$, where $D'\subset D$ denotes the subset with non-scalar local monodromies.
\end{enumerate}
\end{corollary}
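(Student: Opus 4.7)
The plan is to combine the structural output of \autoref{prop:katz-output} with a case-by-case application of the Shephard--Todd classification. First I would observe that the ``in particular'' clause reduces directly to the first assertion: if $|D'|\geq 7$, then by \autoref{prop:katz-output}(1) the local system $\mathbb{U}$ has rank $N:=|D'|-2\geq 5$, and its monodromy group $G$ is an irreducible finite complex reflection group on $\mathbb{C}^N$. By Shephard--Todd, the irreducible reflection groups of rank $\geq 5$ are exhausted by: the infinite family $G(m,p,N)$ with $N\geq 5$ and $(m,p)\neq (1,1)$ (which already includes $W(B_N)=G(2,1,N)$ and $W(D_N)=G(2,2,N)$); the standard $n$-dimensional representation of $W(A_n)=S_{n+1}$ for $n\geq 5$; the exceptional Weyl groups $W(E_6),W(E_7),W(E_8)$; and the Shephard--Todd exceptional groups $W(K_5),W(K_6)$. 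The case $W(A_n)$ is disposed of immediately: $S_{n+1}$ on its standard representation has trivial center and hence contains no nontrivial scalar matrix, contradicting \autoref{prop:katz-output}(4), which guarantees the existence of a scalar $\nu^{-1}I\in G$ with $\nu\neq 1$. The remaining cases are precisely those listed in the first part of the corollary.

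The heart of the argument is to rule out $G(m,p,N)$ for $N\geq 5$. Assuming for contradiction that such a $\mathbb{U}$ exists, \autoref{prop:katz-output} furnishes $N+1$ pseudoreflections $M_1,\ldots,M_{N+1}\in G$ at the finite punctures of $D'$ and an element $\nu^{-1}R\in G$ at $\infty$ (with $R$ a pseudoreflection and $1\neq \nu$ a root of unity such that $\nu^{-1}I\in G$) satisfying $M_1\cdots M_{N+1}\cdot \nu^{-1}R=I$, with these elements generating $G$. I would analyze this relation via the natural surjection $G\twoheadrightarrow S_N$ given by projection modulo the diagonal torus. Pseudoreflections in $G(m,p,N)$ come in two types: ``diagonal'' pseudoreflections (present precisely when $p<m$), whose image in $S_N$ is trivial, and ``twisted transpositions'' $e_i\mapsto \alpha e_j,\ e_j\mapsto \alpha^{-1}e_i$, whose image is the transposition $(i,j)$. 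Writing $k$ for the number of $M_i$ of twisted-transposition type, the product relation projects to a product of $k$ (or $k+1$, if $R$ is twisted-transposition type) transpositions equalling the identity in $S_N$, while generation of $G$ forces the associated multigraph on $\{1,\ldots,N\}$ to be connected. Combining this combinatorial structure with the corresponding relation on the diagonal torus, which tracks how the phases of the twisted transpositions and diagonal reflections must combine to produce the diagonal part of $\nu\cdot R^{-1}$, should yield a contradiction once $N\geq 5$.

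The remaining exceptional groups $W(J_3^{(4)}),W(H_4),W(K_5),W(K_6),W(E_6),W(E_7),W(E_8)$ are each handled by a finite, essentially combinatorial case check: for each such $G$ one enumerates the conjugacy classes of pseudoreflections and the nontrivial scalars $\nu^{-1}I\in G$ (for instance, $W(E_6)$ has trivial center and so is ruled out on the scalar argument alone, whereas $W(E_7),W(E_8)$ admit $-I$ and require a finer argument), and verifies directly, using the explicit orders and character tables, that no tuple of the form mandated by \autoref{prop:katz-output} can both multiply to $I$ and generate $G$. The main obstacle I expect is in the $G(m,p,N)$ case: the combinatorial analysis must be tight enough to be uniform in $N\geq 5$ and all admissible $(m,p)$, carefully tracking both the $S_N$-part (where the transposition-product and connectedness constraints live) and the diagonal-torus part (where the scalar $\nu$ interacts with the phases of the pseudoreflections).
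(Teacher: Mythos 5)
Your overall reduction (rank $N = |D'| - 2 \geq 5$, then run through Shephard--Todd) matches the paper, and your sketch of the combinatorial argument for $G(m,p,N)$ via the projection $\pi\colon G(m,p,N)\to S_N$ and a connectedness/pigeonhole analysis on cycles is essentially \autoref{lemma:combinatorics}. But the rest of the proposal diverges from the paper in ways that matter, and two of your steps contain genuine gaps.

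First, you assert that \autoref{prop:katz-output}(4) ``guarantees the existence of a scalar $\nu^{-1}I \in G$.'' It does not. It guarantees that $\nu^{-1}R \in G$ for some pseudoreflection $R$; whether $R$ itself (and hence the scalar $\nu^{-1}I$) lies in $G$ is a separate claim requiring argument. The paper addresses exactly this point inside \autoref{lemma:h4argument}: one adjoins the scalar, observes that $G' = \langle G, \nu^{-1}I\rangle$ is again an irreducible reflection group of the same rank, and then uses the Shephard--Todd classification to show $G' = G$. Your dismissal of $W(A_n)$ and $W(E_6)$ via ``trivial center'' silently uses the conclusion $\nu^{-1}I \in G$ without supplying this step, and for $W(A_n)$ the needed classification argument is not obviously available (one would have to rule out $S_{N+1}$ sitting inside a strictly larger rank-$N$ reflection group containing the scalar).

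Second, and more seriously, the proposal to handle $W(J_3^{(4)}), W(H_4), W(K_5), W(K_6), W(E_7), W(E_8)$ by a ``finite, essentially combinatorial case check'' on tuples of pseudoreflections is not what the paper does, and it is not realistically executable by hand: $W(K_6)$ has order about $3.9\times 10^7$ and $W(E_8)$ about $7\times 10^8$, and one would need to survey tuples of length $N+2$ (up to $10$) of pseudoreflections that multiply to (a scalar times) a pseudoreflection and generate. The paper instead uses three genuinely different, non-combinatorial arguments: (i) for $W(E_6), W(E_7), W(E_8)$ (and the other classical Weyl groups), \autoref{cor:weyl-group} shows via a Hodge-theoretic analysis that any $\mathbb{V}$ producing a Weyl group would be pulled back from the modular curve $X(1)$ after a twist, hence of ``pullback type,'' which Diarra's Th\'eor\`eme 5.1 rules out once $|D'|\geq 7$; (ii) for $W(J_3^{(4)}), W(H_4), W(K_5)$, \autoref{lemma:h4argument} uses the facts that all pseudoreflections have order $2$ (so $\chi$ has monodromy $-1$) and that the trace field is not totally real, which contradicts the resulting $\mathbb{V}$ landing in $\SL_2(\mathbb{R})$ after middle convolution; (iii) for $W(K_6)$, \autoref{mitchell} runs a delicate coalescing argument combined with \autoref{lemma:coalescehodge} and Cousin--Moussard's classification of MCG-finite affine representations. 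None of these are finite enumerations, and discovering them is the substantive content of \autoref{section:fcrg}. Your proposal as written would stall at precisely these groups.
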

We give the proof in \autoref{section:D-large}, along with the proof of \autoref{cor:finite-orbits-7}.
\subsection{Weyl groups}
\begin{corollary}\label{cor:weyl-group}
With notation as in \autoref{thm:main-technical-thm}, suppose $\mb{U}$, with
monodromy a classical Weyl group $G\subset \GL_N(\mb{C})$ for $N\geq 2$,  is the output of Katz's algorithm applied to the rank two local system $\mb{V}$ satisfying the hypotheses of \autoref{thm:main-technical-thm}. Then, after possibly  twisting by a rank one local system, $\mb{V}$ has coefficients in $\mb{Q}$, and is pulled back from the  modular curve $X(1)$.
\end{corollary}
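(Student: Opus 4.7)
The plan is to use the $\mathbb{Q}$-rationality of the reflection representation of a classical Weyl group to force $\mathbb{V}$, after twisting by a rank one local system, to have $\mathbb{Q}$-coefficients, and then to identify it with the pullback of the universal elliptic modular local system from $X(1)$ via Corlette--Simpson.

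First I will record two structural facts. Since the reflection representation of any classical Weyl group factors through $\GL_N(\mathbb{Z})$, the trace field of $\mathbb{U}$ is $\mathbb{Q}$ and $\sigma(\mathbb{U}) \cong \mathbb{U}$ for every $\sigma \in \on{Aut}(\mathbb{C}/\mathbb{Q})$; moreover, every pseudoreflection in such a $G$ has order two. By \autoref{prop:katz-output}, the local monodromy of $\mathbb{U}$ at each finite puncture in $D'$ has eigenvalues $(1,\ldots,1,-1)$, and at $\infty$ has eigenvalues $\nu^{-1}\cdot(1,\ldots,1,-1)$. Applying \autoref{prop:local-monodromies} with character $\chi$ of local monodromy $\nu$ at $0$, the local monodromy of $MC_\chi(\mathbb{U})$ at each finite puncture in $D'$ has eigenvalues $(-\nu,1)$. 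Since $MC_\chi(\mathbb{U}) = \mathbb{V} \otimes \mathbb{M}^{-1}$ by \autoref{thm:main-thm-intro}, the local monodromies of $\mathbb{V}$ are then determined up to the values of the rank one character $\mathbb{M}$; after a further rank one twist normalizing $\det\mathbb{V}=1$, all local eigenvalues of $\mathbb{V}$ are explicit roots of unity and $\mathbb{V}$ has $\SL_2$-monodromy.

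The key step is a Galois descent: for every $\sigma \in \on{Aut}(\mathbb{C}/\mathbb{Q})$, since $\sigma(\mathbb{U}) \cong \mathbb{U}$ one has $\sigma(\mathbb{V}) \cong MC_{\sigma(\chi)}(\mathbb{U}) \otimes \sigma(\mathbb{M})$. Comparing local monodromies with those of $\mathbb{V}$ and invoking the invertibility of middle convolution (\autoref{prop:basic-props}), one shows that $\sigma(\mathbb{V})$ and $\mathbb{V}$ differ by a rank one local system, yielding a Galois cocycle valued in the (torus) group of rank one local systems on $\mathbb{P}^1\setminus D$. A Hilbert 90-type argument on this torus then produces a rank one twist of $\mathbb{V}$ whose isomorphism class is Galois-invariant, hence defined over $\mathbb{Q}$. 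This descent step---and in particular the careful tracking of how the choices of $\mathbb{L}_1, \mathbb{L}_2, \chi$ in Katz's algorithm behave under Galois conjugation in order to identify the cocycle explicitly---is where I expect the main difficulty to lie.

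Finally, after this twist $\mathbb{V}$ is an irreducible rank two $\mathbb{Q}$-local system of geometric origin with $\SL_2$-monodromy on $\mathbb{P}^1\setminus D$, carrying at least one local monodromy of infinite order. By Corlette--Simpson (see \autoref{rmk:geometric-origin}), $\mathbb{V}$ arises from an abelian scheme of $\GL_2$-type over $X$; the $\mathbb{Q}$-coefficient condition forces this to be an elliptic curve family $E \to X$, non-isotrivial by the infinite-order local monodromy. Such a family is classified by a non-constant morphism $\varphi \colon X \to \mathcal{M}_{1,1} = X(1)$, and $\mathbb{V} \cong \varphi^* R^1\pi_*\mathbb{Q}$ for the universal elliptic curve, which is the desired conclusion.
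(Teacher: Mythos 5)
The paper takes a more direct route than you do, and your proposed Galois-descent step has a genuine gap.

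The paper's proof hinges on a concrete computation you have skipped: it shows that the middle-convolution character $\chi$ necessarily has monodromy $-1$. Since the reflection representation of a classical Weyl group is defined not merely over $\mathbb{Q}$ but over $\mathbb{Z}$, the local monodromy of $\mathbb{U}$ at $\infty$, which by \autoref{prop:katz-output} is $\lambda^{-1}$ times a pseudoreflection, has integer trace; this forces $\lambda^{-1}(\gamma + N-1)\in\mathbb{Z}$, and an elementary norm computation then pins down $\lambda=-1$ (with a small separate check when $N=2$). Once $\lambda=-1$, the conclusion is immediate: $\chi^{-1}$ and $\mathbb{U}$ are both $\mathbb{Z}$-local systems, so $MC_{\chi^{-1}}(\mathbb{U})$ is a rank two polarizable $\mathbb{Z}$-VHS of type $(1,1)$, hence is the $H^1$ of a family of elliptic curves, i.e.\ is pulled back from $X(1)$. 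No appeal to Corlette--Simpson is needed at that final step.

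Your descent argument cannot get off the ground without this. You write $\sigma(\mathbb{V})\cong MC_{\sigma(\chi)}(\mathbb{U})\otimes\sigma(\mathbb{M})$ and assert that this differs from $\mathbb{V}=MC_{\chi}(\mathbb{U})\otimes\mathbb{M}$ by a rank one twist, ``comparing local monodromies and invoking invertibility of middle convolution.'' But middle convolution does not commute with rank one twists: $MC_{\sigma(\chi)}(\mathbb{U})$ and $MC_{\chi}(\mathbb{U})$ are in general not related by a rank one twist when $\sigma(\chi)\neq\chi$. So the cocycle you want to feed into Hilbert 90 is not shown to exist. The only way to salvage the argument is to prove $\sigma(\chi)=\chi$ for all $\sigma$, i.e.\ that $\chi$ has monodromy $-1$ --- which is exactly the step the paper carries out and you omit. (Your observation that $\sigma(\mathbb{U})\cong\mathbb{U}$ is correct, and your local-monodromy bookkeeping is essentially right, but neither of these closes the gap.) If you first establish $\lambda=-1$ by the paper's trace computation, then the descent becomes trivial and unnecessary: $\mathbb{V}''=MC_{\chi^{-1}}(\mathbb{U})$ is already visibly a $\mathbb{Z}$-local system, and you can finish as the paper does without invoking Hilbert 90 or Corlette--Simpson.
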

\begin{proof}
By \autoref{prop:katz-output} and \autoref{prop:local-monodromies}, 
the local monodromies of $\mb{U}$ have eigenvalues
\begin{enumerate}
\item $\{\lambda \beta_i, 1, \cdots, 1\}$ at a finite point  $x_i\in D'$, where $1, \beta_i$ are the generalized eigenvalues  of $MC_{\chi^{-1}}(\mb{U})$ at $x_i$,
\item $\{\lambda^{-1}\gamma, \lambda^{-1}, \cdots, \lambda^{-1}\} $ at $\infty$, where $\lambda, \gamma$ are the generalized eigenvalues of $MC_{\chi^{-1}}(\mb{U})$ at $\infty$. 
\end{enumerate}
We first show that $\lambda=-1$. Since we are assuming $G\subset \GL_N(\mb{C})$ is the  representation of a Weyl group on the corresponding root lattice, each local monodromy matrix of $\mb{U}$ is defined over $\mb{Z}$; 
by considering the trace of monodromy at $\infty$, we obtain
\[
\lambda^{-1}(\gamma+N-1)  \in \mb{Z}.
\]
Since $|\lambda|=1$ and $|\gamma| = 1$, by taking the norm, we have
\begin{equation}\label{square}
|\gamma+N-1|^2=1+(N-1)(\gamma+\bar{\gamma})+(N-1)^2 =m^2
\end{equation}
for some $m\in \mb{Z}$. This implies that $\gamma$ is quadratic over $\mb{Q}$, i.e. it is a $2, 3, 4,$ or $6$-th root of unity. Hence $\gamma+\bar{\gamma}=0, \pm 1$ or $\pm 2$, and if moreover $N\geq 3$ we must have $\gamma+\bar{\gamma}=\pm 2$, i.e. $\gamma=\pm 1$; the latter implies, when $N \geq 3$, that $\lambda\in \mb{Q}$, and since it is a non-trivial root of unity,  we have $\lambda=-1$.

It remains to treat the case $N=2$, in which case  $G$ is one of $W(A_3), W(B_2), W(G_2)$, and it is a straightforward check that $\lambda=-1$ in these cases also.

Therefore we have shown that $\chi$ corresponds to the local system on $\mb{G}_m$ with monodromy $-1$ around zero. Since $\mb{U}$ is in fact a $\mb{Z}$-VHS, the same is true of $MC_{\chi^{-1}}(\mb{U})$; in other words, the latter is a rank two polarizable $\mb{Z}$-VHS of type $(1,1)$, and so comes from a family of elliptic curves, as required.
\end{proof}

\subsection{$G(m, p, N)$ for $N\geq 5$}
Let $S_N$ denote the $N$-th symmetric group. We first recall that there is a natural projection map $\pi: G(m,p,N)\rightarrow S_{N}$: we view $S_N\subset \GL_N(\mb{C})$ as the monomial matrices all of whose non-zero entries are $1$, and $\pi$ sends a matrix in $G(m,p,N)$ to $S_N$ by replacing each non-zero entry by $1$. 

\begin{proposition}[{\cite[Lemma 2.8]{lehrertaylor}}]\label{prop:classifypseudoref}
Suppose $\gamma \in G(m,p,N)$ is a pseudoreflection. Then $\pi(\gamma)$ is either trivial or a transposition.
\end{proposition}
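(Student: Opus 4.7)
The plan is to use the block structure of monomial matrices induced by cycle decomposition. Write $\sigma = \pi(\gamma) \in S_N$ as a product of disjoint cycles, together with some number of fixed points. Since $\gamma$ is monomial, in a basis ordered compatibly with the cycle partition, $\gamma$ is block diagonal with one block per cycle (and $1\times 1$ blocks for the fixed points). The matrix $\gamma-\mathrm{Id}$ inherits this block diagonal structure, so its rank is the sum of the ranks of the individual blocks. The proposition will follow once I show that the only way this sum can equal $1$ is when $\sigma$ is either the identity or a single transposition.

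The first step is to compute the rank of each block. For a fixed point $b$ of $\sigma$ the $1\times 1$ block is $\zeta_b - 1$, contributing $0$ if $\zeta_b = 1$ and $1$ otherwise. For a cycle $(i_1\,i_2\,\cdots\,i_k)$ with $k\geq 2$, letting $\alpha_j$ be the nonzero entry of $\gamma$ in position $(i_{j+1},i_j)$, the corresponding $k\times k$ block of $\gamma-\mathrm{Id}$ has $-1$ on the diagonal and $\alpha_{j-1}$ in position $(i_j, i_{j-1})$. A direct kernel computation (a vector $(c_1,\ldots,c_k)$ lies in the kernel iff $c_j = c_{j-1}\alpha_{j-1}$, which requires $\prod_j \alpha_j = 1$) shows this block has rank $k-1$ if $\prod_j\alpha_j=1$ and rank $k$ otherwise. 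In either case the contribution is at least $k-1$.

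Next, I combine the contributions. If $\sigma$ contains any cycle of length $k\geq 3$, that cycle alone contributes at least $2$ to the rank, so $\gamma-\mathrm{Id}$ cannot have rank $1$. If $\sigma$ contains a $2$-cycle with $\prod \alpha_j \neq 1$, that cycle already contributes $2$, again impossible. So either $\sigma$ is the identity, in which case $\gamma$ is diagonal and exactly one diagonal entry differs from $1$ (forcing rank $1$ as required), or $\sigma$ has a single transposition with $\alpha_1\alpha_2 = 1$ and all remaining points are fixed with $\zeta_b=1$. In both cases $\pi(\gamma)$ is as claimed.

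I don't anticipate any serious obstacle: the argument is a direct linear-algebra computation on block diagonal matrices, and the conclusion falls out from the simple rank bounds per cycle. The only point that requires minor care is the distinction between the two possible ranks for a cycle block (depending on whether the product of the off-diagonal entries equals $1$), but this is incidental to the main statement, since even the minimum value $k-1$ already exceeds $1$ as soon as $k\geq 3$.
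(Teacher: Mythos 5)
The paper gives no proof of this proposition---it simply cites Lehrer--Taylor \cite[Lemma~2.8]{lehrertaylor}---so there is no internal argument to compare against. Your self-contained proof is correct, and it is the standard argument one finds in the reference: order the basis by the orbits of $\sigma=\pi(\gamma)$, observe that $\gamma-\mathrm{Id}$ is block diagonal with one block per cycle (and $1\times1$ blocks for fixed points), compute the kernel of each block, and use additivity of rank. The kernel computation $c_j=c_{j-1}\alpha_{j-1}$ and the resulting rank of $k-1$ (when $\prod_j\alpha_j=1$) or $k$ (otherwise) is right; the only thing worth making fully explicit is that because each $2$-cycle block contributes at least $1$ and each fixed-point block with $\zeta_b\neq1$ contributes exactly $1$, a total rank of $1$ rules out not just long cycles but also the configurations ``two transpositions'' and ``one transposition plus one fixed point with $\zeta_b\neq1$''; this is immediate from the additivity you set up, but you state the final dichotomy without spelling out those eliminations. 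That is a matter of exposition, not a gap.
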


\begin{lemma}\label{lemma:combinatorics}
	For $N\geq 5$, there are no collections of $N+2$ elements $s_1, \cdots , s_{N+2} \in G(m,p,N) $ such that 
\begin{itemize}
\item $s_1\cdots s_{N+2}=e$,
\item $s_i$ is a pseudoreflection for $i=1, \cdots , N+1$,
\item $s_{N+2}$ is a scalar multiple of a pseudoreflection (the pseudoreflection is not assumed to be in $G(m,p,N)$), and 
\item $s_1, \cdots , s_{N+2}$ generate $G(m,p,N)$.
\end{itemize}
\end{lemma}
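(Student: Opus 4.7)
The plan is to reduce the statement to a counting problem in $S_N$ via the natural surjective homomorphism $\pi \colon G(m,p,N) \twoheadrightarrow S_N$ whose kernel is the diagonal subgroup. (Surjectivity is clear because all permutation matrices lie in $G(m,p,N)$.) Since by hypothesis the $s_i$ generate $G(m,p,N)$, the images $\pi(s_i)$ generate $S_N$ and in particular act transitively on $\{1,\dots,N\}$; moreover $\pi(s_1)\pi(s_2)\cdots \pi(s_{N+2}) = e$ in $S_N$.

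\textbf{Step 1: Each $\pi(s_i)$ is either the identity or a transposition.} For $i = 1, \dots, N+1$ this is exactly \autoref{prop:classifypseudoref}. For $i = N+2$ we extend the classification by hand. Write $s_{N+2} = c\cdot R$ with $c \in \mb{C}^\times$ and $R$ a pseudoreflection (not necessarily in $G(m,p,N)$); then $s_{N+2} - cI = c(R-I)$ has rank exactly one. Decompose $\{1,\dots,N\}$ as $F \sqcup M$, where $F$ is the set of fixed points of $\pi(s_{N+2})$ and $M$ is the support of the non-trivial cycles. Because $s_{N+2}$ is monomial, the matrix $s_{N+2} - cI$ is block-diagonal for this decomposition. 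On any non-trivial cycle of length $l \geq 3$, the diagonal entries of the corresponding block are all $-c \neq 0$ and the cyclic structure forces many off-diagonal entries to vanish; in particular, a suitable principal $2 \times 2$ minor equals $c^2 \neq 0$, which already forces rank $\geq 2$ on that single block. The same observation rules out the presence of two or more disjoint $2$-cycles. Hence $\pi(s_{N+2})$ is either trivial or a single transposition, as claimed.

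\textbf{Step 2: Apply the Hurwitz/Riemann--Hurwitz bound.} Let $k$ be the number of indices $i \in \{1,\dots,N+2\}$ for which $\pi(s_i)$ is an actual transposition, so $k \leq N+2$. Dropping the trivial factors from the product yields transpositions $\tau_1, \dots, \tau_k \in S_N$ with $\tau_1 \cdots \tau_k = e$ that generate a transitive subgroup of $S_N$. By the Riemann existence theorem, this data corresponds to a connected simply-branched degree-$N$ cover of $\mb{P}^1$ with $k$ branch points of genus $g \geq 0$; Riemann--Hurwitz then gives $k = 2N - 2 + 2g \geq 2(N-1)$.

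\textbf{Conclusion and main obstacle.} Combining Steps 1 and 2 yields $2(N-1) \leq k \leq N+2$, equivalently $N \leq 4$, contradicting the hypothesis $N \geq 5$. The principal technical point is Step 1 for the index $i = N+2$: the existing classification of pseudoreflections in $G(m,p,N)$ does not directly apply because the pseudoreflection $R$ need not lie in $G(m,p,N)$, and the rank-one perturbation analysis above is what is required to rule out longer cycles in $\pi(s_{N+2})$.
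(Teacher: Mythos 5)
Your proof is correct, and Step 2 takes a genuinely different and cleaner route than the paper. Both proofs reduce the problem to $S_N$ via the projection $\pi\colon G(m,p,N)\twoheadrightarrow S_N$ and both establish that each $\pi(s_i)$ is trivial or a transposition. For the index $i=N+2$ the paper observes that $R=\mu^{-1}s_{N+2}$ is a monomial matrix with root-of-unity entries, hence lies in some $G(m',1,N)$, and then applies \autoref{prop:classifypseudoref} to $R$; your direct rank-one perturbation analysis of $s_{N+2}-cI$ (using the block structure of a monomial matrix and exhibiting a nonzero $2\times 2$ principal minor whenever $\pi(s_{N+2})$ has a cycle of length $\geq 3$ or two disjoint $2$-cycles) accomplishes the same thing and is equally valid. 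The real divergence is in the combinatorial core. The paper proves by hand, via a careful accounting of ``cycle lengths'' $c_j$ and incidence counts $a_i$ together with a pigeonhole argument, that $k$ transpositions with trivial product cannot generate a transitive subgroup of $S_N$ when $k\leq N+2$ and $N\geq 5$; the argument splits into cases ($N$ odd; $N$ even and $\geq 8$; $N=6$ by brute force). You instead invoke the Riemann existence theorem and Riemann--Hurwitz: transpositions $\tau_1,\dots,\tau_k$ with $\tau_1\cdots\tau_k=e$ generating a transitive subgroup define a connected degree-$N$ cover of $\mathbb{P}^1$ with $k$ simple branch points, so $k=2N-2+2g\geq 2(N-1)$, and $2(N-1)\leq k\leq N+2$ forces $N\leq 4$. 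This is tighter and entirely uniform in $N$, avoiding all the casework and the brute-force check at $N=6$; its only cost is invoking Riemann existence (one could equivalently cite the purely combinatorial genus inequality $\sum_i\bigl(N-\mathrm{ncyc}(\sigma_i)\bigr)\geq 2(N-1)$ for transitive tuples with trivial product, which makes the argument self-contained without topology). Your bound $N\leq 4$ also neatly explains why the examples in \autoref{gmp3} ($N=3$) and \autoref{d4} ($N=4$) exist and why the lemma's hypothesis $N\geq 5$ is sharp.
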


\begin{proof}
We proceed by contradiction, so let us assume such a collection  $s_i \in G(m,p,N)$ exists. 
For $i=1, \cdots , N+2$, denote by  $t_i$ be the elements $\pi(s_i)\in S_{N}$. Note that each $t_i$ is either the trivial element or a transposition. This is clear for $i=1, \cdots, N+1$ so it remains to check it for $t_{N+2}$. By our assumption $s_{N+2}=\mu R$ for $\mu$ a root of unity and $R\in \GL_N(\mb{C})$ a pseudoreflection;  the group generated by $s_1, \cdots, s_{N+1}, R$ is certainly contained in $G(m',1,N)$ for some $m'$, and we may apply \autoref{prop:classifypseudoref} to conclude. 

The product of the $t_i$'s is then trivial. 
 Note that these transpositions must act transitively on $\{1,2,...,N\}$, since otherwise the group generated by the $s_i$'s is reducible.

We discard the $t_i$ which are trivial; let $k\leq N+2$ be the number of transpositions remaining, and relabel the $t_i$ so that $t_1, \ldots, t_k$ are transpositions with
$t_1 \cdots t_k = e$, and generating a transitive subgroup of $S_N$.
We refer to the sequence $(j,t_k(j), t_{k-1}t_k(j),
\ldots, t_1 \cdots t_k(j) = j)$ with consecutive repetitions removed as the {\em cycle
of $j$}.
For each $j \in \{1, \ldots, N\}$ let $c_j$ denote one fewer than the number of
elements in the cycle of $j$.
Informally, $c_j$ is the ``cycle length'' of $j$.
For $i \in \{1, \ldots, N\}$ let $a_i$ denote the number of $t_1, \ldots, t_k$
among which $i$ appears. Note that every $a_i\geq 2$, as otherwise $i$ would not be not be fixed by $t_1\cdots t_k$, and every $c_j \geq 2$, since the action is irreducible.

We first consider the case $N$ is odd.
Since a product of an odd number of transpositions is never the identity, we must have $k
\leq N+1$.
Since there are at most $N+1$ transpositions, $$\sum_{j=1}^N c_j =
\sum_{i=1}^N a_i \leq  2(N+1),$$ as each transposition contributes $2$ to both $\sum_j c_j$ and $\sum_i a_i$.

Since $N \geq 5$, the pigeonhole principle implies there are at least $2$ values of $j$ for which
$c_j = 2$.
Without loss of generality, we may assume $c_1 = 2$ and $c_2 = 2$.
The cycle of $1$ cannot be $1,2,1$, as then the cycle of $2$ would be $2,1,2$,
and the action would be reducible.
We can therefore assume the cycle of $1$ is $1,3,1$ and the cycle of $2$ is
$2,4,2$, meaning that the transpositions 
$(13),(31),(24),(42)$ appear among the $t_k$.
In order for $c_1=2$, we must either have (considering the cycle of $3$) $a_3
\geq 4$ or both $a_1 \geq 3$ and $a_3 \geq 3$. Similarly, we must either have $a_4
\geq 4$ or both $a_2 \geq 3$ and $a_4 \geq 3$.
This implies $a_1 + a_2 + a_3 + a_4 \geq 12$. Since
each $a_i \geq 2$ we find $12 + 2(N-4) \leq (a_1 + a_2 + a_3 + a_4) +\sum_{i=5}^N
a_i \leq 2N+2$, a contradiction.

The case $N$ is even and $N \geq 8$ is similar to the case $N$ is odd, where one argues by showing
there are at least $3$ values of $j$ with $c_j = 2$, and one obtains that $18
\leq \sum_{i=1}^6 a_i$, leading to a similar contradiction.
The case $N = 6$ follows from a brute-force verification.
\end{proof}

\begin{lemma}
\label{GmpNgeq5}
Suppose $\mathbb{V}$ satisfies	the hypotheses of \autoref{thm:main-technical-thm}. Then $G(m, p, N)$ does not arise as the output of applying Katz's algorithm to $\mathbb{V}$ if $N\geq 5$.
\end{lemma}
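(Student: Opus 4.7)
The plan is to combine Proposition~\ref{prop:katz-output} with the combinatorial Lemma~\ref{lemma:combinatorics} to immediately produce a contradiction. Suppose for contradiction that Katz's algorithm applied to $\mathbb{V}$ yields a local system $\mathbb{U}$ whose monodromy is $G(m,p,N) \subset \GL_N(\mathbb{C})$ for some $N \geq 5$. By Proposition~\ref{prop:katz-output}(1), we have $\rk \mathbb{U} = |D'| - 2 = N$, so $|D'| = N + 2$; write $D' = \{x_1, \ldots, x_{N+1}, \infty\}$ after relabeling.

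Next I would extract the required structural data on the local monodromies. Let $s_i \in G(m,p,N)$ denote the local monodromy of $\mathbb{U}$ around $x_i$ for $i = 1, \ldots, N+1$, and let $s_{N+2}$ denote the local monodromy at $\infty$. By Proposition~\ref{prop:katz-output}(3) each $s_i$ with $i \leq N+1$ is a pseudoreflection in $G(m,p,N)$, while by Proposition~\ref{prop:katz-output}(4) the element $s_{N+2}$ has the form $\nu^{-1} R$ where $R$ is a pseudoreflection (a priori in $\GL_N(\mathbb{C})$, not necessarily in $G(m,p,N)$) and $\nu^{-1}$ is a scalar; this is exactly what Lemma~\ref{lemma:combinatorics} permits. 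Local monodromies at points of $D \setminus D'$ are trivial by Proposition~\ref{prop:katz-output}(3), so they contribute nothing to the product relation; hence $s_1 s_2 \cdots s_{N+2} = e$ since these loops generate $\pi_1(\mathbb{P}^1 \setminus D)$ subject to exactly this relation. Finally, since $\mathbb{U}$ has monodromy group precisely $G(m,p,N)$, the elements $s_1, \ldots, s_{N+2}$ generate $G(m,p,N)$.

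Thus $s_1, \ldots, s_{N+2}$ satisfy all four hypotheses of Lemma~\ref{lemma:combinatorics} with $N \geq 5$, contradicting its conclusion. This contradiction shows that $G(m,p,N)$ with $N \geq 5$ cannot occur as the monodromy group of $\mathbb{U}$, completing the proof.

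The only nontrivial ingredient is Lemma~\ref{lemma:combinatorics}, which is already established; the present lemma is essentially a direct translation of the output of Katz's algorithm into the symmetric-group combinatorial setup of that lemma, so no additional obstacle arises.
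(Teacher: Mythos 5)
Your proposal is correct and takes essentially the same approach as the paper: the paper's proof is a one-sentence invocation of Proposition~\ref{prop:katz-output} and Lemma~\ref{lemma:combinatorics}, and your write-up simply spells out the translation between the two, which is indeed all that is required.
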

\begin{proof}
	Applying the Katz algorithm to $\mathbb{V}$ as in \autoref{thm:main-technical-thm}, the result is irreducible, and the local monodromies satisfy the conditions of \autoref{lemma:combinatorics} by \autoref{prop:katz-output}, which in turn implies that the resulting complex reflection group $G\subset \GL_N(\mb{C})$ cannot be  $G(m,p,N)$  for $N\geq 5$.
\end{proof}

\subsection{The groups $W(J_3^{(4)})$, $W(H_4)$, $W(K_5)$}
\begin{lemma}\label{lemma:h4argument}
Suppose $\mathbb{V}$ satisfies the hypotheses of \autoref{thm:main-technical-thm}. The groups $W(J_3^{(4)})$, $W(H_4)$, $W(K_5)$ do not arise as the output of applying Katz's algorithm to $\mathbb{V}$.
\end{lemma}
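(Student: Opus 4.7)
The plan is to use Proposition \ref{prop:katz-output} to reduce the claim to a purely group-theoretic non-existence statement. Assume toward contradiction that one of the three groups $G$ arises as the monodromy of the Katz output $\mathbb{U}$. Setting $N=\operatorname{rank}(G)$ (so $N=3,4,5$ respectively), by Proposition \ref{prop:katz-output} we obtain pseudoreflections $s_1,\ldots,s_{N+1}$ of $G$ together with $s_\infty=\nu^{-1}R$ (with $R$ a pseudoreflection of $\GL_N(\mathbb{C})$ and $\nu\neq 1$ a root of unity), satisfying $s_1\cdots s_{N+1}s_\infty=1$ and generating $G$. My goal is to show that for each $G$ on the list, no such tuple exists.

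The first step is to constrain $\nu$ by an eigenvalue-multiplicity argument. The element $s_\infty\in G$ has $\nu^{-1}$ as an eigenvalue with multiplicity $N-1$. Springer's inequality for complex reflection groups (a formal consequence of his theory of regular elements) states that the multiplicity of a primitive $d$-th root of unity as an eigenvalue of any element of $G$ is at most $a(d)\defeq|\{i : d\mid d_i\}|$, where $d_1,\ldots,d_N$ are the degrees of $G$. For $W(H_4)$ (degrees $2,12,20,30$), requiring $a(d)\geq 3$ forces $d\mid 2$, so $\nu=-1$; for $W(K_5)$, the analogous computation with its degrees again forces $\nu=-1$; for $W(J_3^{(4)})$ only a handful of values of $\nu$ survive.

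The second step uses the determinant relation $\prod_{i=1}^{N+1}\det(s_i)\cdot\nu^{-N}\det(R)=1$, together with the fact that the determinants of pseudoreflections in each specific $G$ are restricted (e.g., to $\{-1\}$ for real reflection groups), to pin down $\det R$, hence the order of $R$ and the conjugacy class of $s_\infty$ in $G$. For $W(H_4)$ and $W(K_5)$ with $\nu=-1$ this shows in particular that $s_\infty$ is itself a reflection of $G$, reducing the problem to the existence of $N+2$ reflections of $G$ whose product is the identity and which generate $G$.

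The third step, and the main obstacle, is to rule out each of these residual configurations. For the real reflection groups $W(H_4)$ and $W(K_5)$, I plan to exploit reflection length: the Coxeter element has reflection length equal to the rank $N$, and the equation $s_1\cdots s_{N+1}=s_\infty^{-1}$ forces certain partial products to have anomalously small reflection length, which in turn restricts the constituent reflections to lie inside a proper parabolic or reflection subgroup of $G$ — contradicting the generation hypothesis. For $W(J_3^{(4)})$ an analogous argument is needed for each value of $\nu$ surviving step one, using the explicit sub-reflection subgroup structure of this rank-$3$ group. The hardest point is the uniform handling of all surviving cases; if a clean structural argument does not suffice, the final verification can be carried out computationally on the finite groups in question (e.g., with character-table/CHEVIE data), analogously to the way Lemma \ref{lemma:combinatorics} packaged the corresponding combinatorial obstruction for $G(m,p,N)$.
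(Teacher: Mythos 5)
Your proposal diverges from the paper's proof after the first step, and the divergence introduces a genuine gap.

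Your step one — using Springer's eigenvalue-multiplicity bound to pin down $\nu$ — is a clean and valid alternative to what the paper does. (The paper instead argues that adjoining the scalar $\lambda$ to $G$ would produce a larger irreducible complex reflection group of the same rank, which the Shephard--Todd classification rules out; hence $R\in G$, $\lambda\in Z(G)=\mathbb{Z}/2$, and $\lambda=-1$.) Both routes land at $\nu=-1$, and yours is arguably more self-contained.

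From there, however, the paper does not attempt a purely group-theoretic argument. It uses the Hodge-theoretic hypotheses of \autoref{thm:main-technical-thm} in an essential way: with $\chi$ having monodromy $-1$, the rank-two local system $\mathbb{V}=MC_{\chi}(\mathbb{U})$ has unipotent local monodromy at all finite points (since all pseudoreflections of $G$ have order $2$, by \autoref{prop:local-monodromies}); $\mathbb{V}$ and all of its Galois conjugates underlie type $(1,1)$ polarizable VHS, hence have monodromy in $\SL_2(\mathbb{R})$; therefore the trace field of $\mathbb{U}$ is totally real, contradicting the fact that the trace field of the reflection representation of each of $W(J_3^{(4)})$, $W(H_4)$, $W(K_5)$ is not totally real. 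Your proposal discards all of this by reducing, at the outset, ``to a purely group-theoretic non-existence statement.'' That reduction is not justified: the lemma's hypotheses constrain $\mathbb{V}$ and its Galois conjugates Hodge-theoretically, and there is no reason to expect that every tuple $(s_1,\dots,s_{N+1},s_\infty)$ of the combinatorial type you describe fails to exist — rather, what the paper shows is that even if such tuples exist, the resulting $\mathbb{V}$ cannot satisfy the hypotheses. Proving the stronger, purely combinatorial non-existence statement is neither necessary nor obviously true, and the rest of your plan would be fighting a harder (possibly false) battle.

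Two concrete errors compound this. In step two you assert that $s_\infty=\nu^{-1}R=-R$ is itself a reflection of $G$. It is not, for $N\geq 3$: if $R$ has eigenvalues $(1,\dots,1,-1)$ then $-R$ has eigenvalues $(-1,\dots,-1,1)$, so $s_\infty-\operatorname{Id}$ has rank $N-1$, not $1$. Thus the ``reduction to $N+2$ reflections of $G$ whose product is the identity'' does not hold, and the reflection-length bookkeeping in step three is built on a false premise. In step three you also describe $W(K_5)$ as a real reflection group, but it is not (its reflection representation has trace field $\mathbb{Q}(\omega)$, $\omega$ a primitive cube root of unity), so Coxeter-element and reflection-length arguments for real reflection groups do not directly apply. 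The fallback to a computer verification would at best establish the combinatorial statement, which, as above, does not suffice to prove the lemma.
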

\begin{proof}
We follow the notation of \autoref{thm:main-technical-thm} and \autoref{prop:katz-output}. Let $G$ denote any of  $W(J_3^{(4)}), W(H_4), W(K_5)$, and $\rho$ its natural representation coming from the description as a complex reflection group. We use the following properties of $G$:
\begin{enumerate}
\item The trace field of $\rho$ is not totally real: it is $\mb{Q}(\sqrt{-7})$ for $W(J_3^{(4)})$, $\mb{Q}(\omega, \sqrt{5})$ for $W(H_4)$, and $\mb{Q}(\omega)$ for $W(K_5)$, where $\omega$ is a primitive third root of unity (see e.g. \cite[Table 1]{feit}),
\item all pseudoreflections are of order 2, and 
\item its center is $\mb{Z}/2$.
\end{enumerate}

Suppose $(G, \rho)$ arises from the Katz' algorithm, i.e. there is a local system $\mb{U}$ on a punctured $\mb{P}^1$ with monodromy $G$ and satisfying the properties listed in \autoref{prop:katz-output}. 

The monodromy $s_{\infty}$ at $\infty$ is given by $\lambda^{-1}R$ for some root of unity $\lambda\neq 1$ and pseudoreflection $R$. Note that $R$ is not a priori in $G$, but this is true in our case:  adding in the scalar element $\lambda$, we obtain a possibly larger irreducible finite complex reflection group $G'$ of  the same rank, but the Shephard-Todd classification shows that this must be $G$ itself. Indeed, we may rule out the case $G'=G(m,p,n)$ since this would imply $G$ is \emph{imprimitive}, and the only imprimitive finite complex reflection groups are the $G(m,p,n)$'s themselves---see \cite[Chapter 2]{lehrertaylor}.  For the exceptional ones, we simply rule them out by inspection---for example, their orders are not divisible by that of $G$. Therefore $R\in G$, and so $\lambda$ is in the center of $G$, which in turn implies $\lambda=-1$. 

Hence $\chi$ (as defined in \autoref{section:thealgorithm}) is the rank one local system on $\mb{G}_m$ with monodromy $-1$ around $0$. Applying $MC_{\chi^{-1}}=MC_{\chi}$ to $\rho$, the result is a rank two local system $\mb{V}$ with the same trace field as $\rho$.  On the other hand, by \autoref{prop:local-monodromies} and point (2) above,  the monodromy at each finite  point of $D$ is conjugate to $J(1,2)$, and therefore $\mb{V}$ has trivial determinant; $\mb{V}$ and all of its Galois conjugates underly polarizable variations of Hodge structure, and the unipotent local monodromies force all of these to have type $(1,1)$. In other words,  the monodromy of $\mb{V}$ and each of its Galois conjugates takes values in $\SL_2(\mb{R})$, and therefore $\mathbb{U}$ has totally real trace field, contradicting point (1) above.
\end{proof}
\subsection{The groups $W(J_3^{(5)}), W(N_4), W(O_4), W(K_6)$}
\begin{lemma}\label{N4etc}
Suppose $\mathbb{V}$ satisfies	the hypotheses of \autoref{thm:main-technical-thm}, and in addition $\mathbb{V}$ has local monodromy of infinite order at some point. Then $W(J_3^{(5)}), W(N_4), W(O_4), W(K_6)$ do not arise as the output of applying Katz's algorithm to $\mathbb{V}$.
\end{lemma}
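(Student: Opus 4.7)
The plan is to extend the argument of \autoref{lemma:h4argument} group by group, using the infinite-order hypothesis to pin down the scalar $\nu$ introduced by Katz's algorithm. As in that lemma, the Shephard--Todd classification (applied to the group obtained by adjoining $\nu^{-1}$ to $G$) forces $\nu^{-1}$ to lie in the center of $G$. The new hypothesis---that $\mb{V}$ has a local monodromy matrix of infinite order---combined with \autoref{prop:local-monodromies} applied to the inverse middle convolution $MC_{\chi^{-1}} \colon \mb{U} \to \mb{V}''$, forces $\nu$ to equal the nontrivial eigenvalue of some pseudoreflection of $\mb{U}$ at a finite puncture: otherwise every local monodromy of $\mb{V}''$, and hence of $\mb{V}$ (which differs from $\mb{V}''$ only by a rank-one twist), would be diagonalizable of finite order. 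Combined, these two constraints determine $\nu$ up to Galois conjugacy in each of the four cases.

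For $G = W(O_4) = G_{31}$, all pseudoreflections have order $2$, forcing $\nu = -1$. This places us in exactly the setup of \autoref{lemma:h4argument}: after an innocuous rank-one twist, $\mb{V}$ has trivial determinant and unipotent monodromies at all finite punctures, so as a type-$(1,1)$ VHS its monodromy is $\SL_2(\mb{R})$-valued with totally real trace field. This contradicts the trace field $\mb{Q}(i)$ of $G_{31}$.

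For $G = W(J_3^{(5)}),\, W(N_4),\, W(K_6)$, with pseudoreflection order $k \in \{3,5\}$, $\nu$ is a primitive $k$-th root of unity. The local monodromies of $\mb{V}''$ now form a mixture of unipotent $J(1,2)$ blocks (where $\beta_i = \nu$) and diagonal blocks $\on{diag}(\beta_i\nu^{-1}, 1)$ elsewhere, so the unipotent-monodromy argument does not apply verbatim. Instead, I would use that $\mb{V}$, as a non-unitary rank-$2$ VHS of geometric origin with Zariski-dense monodromy, arises by Corlette--Simpson from a generically simple $\GL_2$-type abelian scheme whose endomorphism algebra is necessarily totally real; hence the trace field of $\mb{V}$ is totally real. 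Requiring each local trace of $\mb{V}$ to lie in this field pins down the scalar twists coming from $\mb{L}_1 \otimes \mb{L}_2$ at each puncture up to a finite set of options, after which a short determinant computation shows $\det \mb{V}$ is trivial. Thus $\mb{V}$ is $\SL_2(\mb{R})$-valued, and its local monodromy eigenvalues at each puncture form a reciprocal pair of roots of unity, yielding an orbifold uniformization of $\mb{P}^1$ by the monodromy image.

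The main obstacle is then the case analysis for each of these three groups: one must exhaust every configuration of pseudoreflection eigenvalues $\{\beta_i\}$ across the $|D'|-1$ finite punctures, subject to (i) the product-equals-identity relation, (ii) the local monodromies generating $G$, and (iii) compatibility with the Fuchsian orbifold structure on $\mb{P}^1$ obtained above. For each of the three groups this combinatorial analysis is group-specific, combining Fuchsian uniformization with the explicit Shephard--Todd description of $G$, and forms the technical heart of the proof.
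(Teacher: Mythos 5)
The proposal goes astray at a factual point that makes the bulk of the argument unnecessary (and, as written, incomplete). You assert that $W(J_3^{(5)})$, $W(N_4)$, and $W(K_6)$ have pseudoreflections of order $k \in \{3,5\}$. This is false: in all four of the groups $W(J_3^{(5)}) = G_{27}$, $W(N_4) = G_{29}$, $W(O_4) = G_{31}$, and $W(K_6) = G_{34}$, every pseudoreflection has order $2$ (this is exactly the fact the paper records and uses). The Shephard--Todd groups with higher-order reflections in rank $3$ and $4$ are $W(L_3) = G_{25}$, $W(M_3) = G_{26}$, $W(L_4) = G_{32}$, and (the non-exceptional) $G(m,p,n)$ for $m > 2$; the paper deliberately excludes those from this lemma and leaves them open in the closing remark of \autoref{section:fcrg}.

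Once this is corrected, the case you handle correctly for $W(O_4)$ is in fact the entire proof, and it matches the paper: quasi-unipotence plus the infinite-order hypothesis gives a non-semisimple local monodromy of $\mb{V}$; since all pseudoreflections of $G$ have order $2$, \autoref{prop:local-monodromies} forces $\chi$ to have monodromy $-1$; and then the argument of \autoref{lemma:h4argument} shows $\mb{V}$ (hence $\mb{U}$) has totally real trace field, contradicting the trace fields $\mb{Q}(\omega,\sqrt5)$, $\mb{Q}(i)$, $\mb{Q}(i)$, $\mb{Q}(\omega)$. Your elaborate second half---pinning down the twist via a claimed totally real endomorphism algebra of the associated $\GL_2$-type abelian scheme, and a per-group Fuchsian orbifold case analysis---is therefore moot. (It also contains a gap on its own terms: the endomorphism algebra of a $\GL_2$-type abelian scheme need not be totally real, e.g.\ it can be CM, so the inference that the trace field of $\mb{V}$ is totally real does not follow from Corlette--Simpson alone; and the promised case analysis is announced rather than carried out.)
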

\begin{proof}
Set $G$ to be one of $W(J_3^{(5)}), W(N_4), W(O_4), W(K_6)$. We recall the following facts about $G$:
\begin{enumerate}
	\item The trace field of the tautological representation of $G$ is not totally real. For $W(J_3^{(5)})$ it is $\mathbb{Q}(\omega, \sqrt{5})$, where $\omega$ is a primitive cube root of unity; for $W(N_4), W(O_4)$ it is $\mathbb{Q}(i)$; and for $W(K_6)$, it is $\mathbb{Q}(\omega)$ \cite[Table 1]{feit}.
	\item All pseudoreflections are of order $2$.
\end{enumerate}
The hypotheses of \autoref{thm:main-technical-thm} imply that $\mathbb{V}$ has quasi-unipotent local monodromy; hence the hypothesis that $\mathbb{V}$ has local monodromy of infinite order at some point implies there exists a point where the local monodromy of $\mathbb{V}$ is not semisimple. But as all pseudoreflections in $G$ have order $2$, this implies by \autoref{prop:local-monodromies} that $\chi$ (as defined in \autoref{section:thealgorithm}) is the rank one local system on $\mb{G}_m$ with monodromy $-1$ around $0$. Now we may proceed as in the proof of \autoref{lemma:h4argument}---$\mathbb{V}$, and hence $\mathbb{U}$, has totally real trace field, contradicting (1) above.
\end{proof}

\subsection{Further analysis of Mitchell's group}
We recall the following facts about Mitchell's group $W(K_6)$:
\begin{enumerate}
\item  the trace field of  $W(K_6)$ is $\mb{Q}(\omega)$, where $\omega$ is a primitive third root of unity \cite[Table 1]{feit}, and
\item all of its 126 pseudoreflections are of order 2, and its center is cyclic of order 6.
\end{enumerate}
Picking a basepoint $x\in \mb{P}^1\setminus \{x_1, \cdots, x_n\}$, suppose  that $(A_1, \cdots, A_n)$ represents a point of $X_n$. Then for any $i=1, \cdots, n-1$, the tuple $(A_1, \cdots, A_iA_{i+1}, \cdots, A_n)$ also has product equal to the identity, and we call it a \emph{coalescence} of $(A_1, \cdots, A_{n})$. We also call  $(A_nA_1, \cdots, A_{n-1})$ a coalescence of $(A_1, \cdots, A_{n})$; it is straightforward to see that any  coalescence of a MCG-finite tuple is again MCG-finite.
 
 \begin{lemma}\label{lemma:coalescehodge}
 Suppose $\mb{V}$ is an irreducible MCG-finite $\SL_2$-local system on an $n$-punctured $\mb{P}^1$ with $n\geq 8$, whose local monodromies around the punctures are non-scalar, and which moreover underlies a polarizable $\mb{C}$-VHS of type $(1,1)$. Suppose $\mb{V}'$ is a coalescence of $\mb{V}$ which stays irreducible. Then either 
 \begin{itemize}
 \item $\mb{V}'$, viewed as a local system on $\mathbb{P}^1\setminus \{y_1, \cdots, y_{n-1}\}$, also underlies a polarizable $\mb{C}$-VHS of type $(1,1)$, or 
 \item the Zariski closure of the monodromy of $\mb{V}'$ is contained in, up to conjugation, the infinite dihedral group 
 \[
 D_{\infty}=\bigg\{  \begin{pmatrix}
 c & 0 \\
 0 & c^{-1}
 \end{pmatrix} \bigg| c\in \mb{C}^{\times}   \bigg\} \cup 
 \bigg\{  \begin{pmatrix}
 0 & c \\
 -c^{-1} & 0
 \end{pmatrix} \bigg| c\in \mb{C}^{\times}   \bigg\}. 
 \]
  \end{itemize}
 \end{lemma}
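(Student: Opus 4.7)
The plan is to observe that the polarization on $\mathbb{V}$ confines the monodromy of $\mathbb{V}'$ to a real form of $\SL_2(\mathbb{C})$, analyze the Zariski closure of this monodromy, and appeal to non-abelian Hodge theory in the Zariski-dense case. Since $\mathbb{V}$ underlies a polarizable $\mathbb{C}$-VHS of type $(1,1)$, the polarization gives a flat Hermitian form $\psi$ of signature $(1,1)$ on $\mathbb{V}$, so up to conjugation the image of the monodromy of $\mathbb{V}$ lies in $\operatorname{SU}(1,1)\cong\SL_2(\mathbb{R})$. The monodromy of the coalescence $\mathbb{V}'$ is generated by elements (for instance $A_1,\ldots,A_iA_{i+1},\ldots,A_n$ or $A_nA_1,A_2,\ldots,A_{n-1}$) lying in the image of $\mathbb{V}$, hence it too lies in this conjugate of $\SL_2(\mathbb{R})$ and preserves $\psi$.

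Let $H\subset\SL_2(\mathbb{C})$ denote the Zariski closure of the image of $\mathbb{V}'$. Since $\mathbb{V}'$ is irreducible, $H$ acts irreducibly on $\mathbb{C}^2$, so by the classification of closed subgroups of $\SL_2(\mathbb{C})$, $H$ is either $\SL_2(\mathbb{C})$, a normalizer $N(T)$ of a maximal torus, or a finite binary polyhedral subgroup. The finite case is excluded because every finite subgroup of $\SL_2(\mathbb{R})$ lies in a maximal compact $\operatorname{SO}(2)$ and is therefore cyclic, hence reducible. Similarly, if $H=N(T)$ with $T$ the compact real torus, then $N(T)\cap\SL_2(\mathbb{R})=\operatorname{SO}(2)$ is abelian, again contradicting irreducibility. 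Thus $H$ is either $\SL_2(\mathbb{C})$, or $N(T)$ with $T$ split, and in the latter case the image of $\mathbb{V}'$ is conjugate into the group $D_\infty$ of the lemma's second bullet.

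In the remaining case $H=\SL_2(\mathbb{C})$, I would argue that $\mathbb{V}'$ underlies a polarizable $\mathbb{C}$-VHS of type $(1,1)$. The inherited Hermitian form $\psi$ on $\mathbb{V}'$ has signature $(1,1)$: a definite signature would force $\mathbb{V}'$ to be unitary, so its image would lie in both $\SL_2(\mathbb{R})$ and a conjugate of $\operatorname{SU}(2)$, hence in the abelian subgroup $\operatorname{SO}(2)$, contradicting irreducibility. By the parabolic non-abelian Hodge correspondence (\autoref{prop:NAHT}), $\mathbb{V}'$ corresponds to a parabolically stable Higgs bundle on $(\mathbb{P}^1,D')$ of parabolic degree zero, and Simpson's characterization of $\mathbb{C}$-VHS among semisimple local systems says that a local system with monodromy in the real form $\operatorname{SU}(1,1)$ of $\SL_2(\mathbb{C})$ is fixed under the natural $\mathbb{C}^*$-action scaling the Higgs field. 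In rank two this fixed-point locus consists of Higgs bundles splitting as $F\oplus\mathscr{E}'/F$ with nonzero Higgs field $\theta\colon F\to(\mathscr{E}'/F)\otimes\Omega^1_{\mathbb{P}^1}(\log D')$, which is precisely the associated graded of the Hodge filtration of a polarizable $\mathbb{C}$-VHS of type $(1,0)+(0,1)$.

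The main obstacle is this last step: invoking Simpson's criterion in the parabolic/quasi-projective setting, that an irreducible rank two local system on $\mathbb{P}^1\setminus D'$ preserving a Hermitian form of signature $(1,1)$ underlies a $\mathbb{C}$-VHS. This is an instance of the general principle that a semisimple local system whose monodromy lies in a real form of a reductive group $G$ underlies a $\mathbb{C}$-VHS with structure group $G$, and needs to be adapted to the quasi-projective setting via the parabolic non-abelian Hodge theory referenced in \autoref{prop:NAHT}.
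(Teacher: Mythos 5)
Your analysis of the non-Zariski-dense case is correct and essentially equivalent to the paper's: you conjugate the monodromy into $\operatorname{SU}(1,1)\cong\SL_2(\mathbb{R})$ and rule out finite groups and the compact torus normalizer, while the paper rules out the finite case by observing that a finite group cannot preserve an indefinite Hermitian form (and no definite one, by irreducibility); both are valid routes to the dichotomy $H=\SL_2(\mathbb{C})$ or $H\subset D_\infty$.

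There is, however, a genuine gap in the Zariski-dense case. You assert that ``a local system with monodromy in $\operatorname{SU}(1,1)$ is fixed under the $\mathbb{C}^*$-action scaling the Higgs field,'' i.e.~that having real monodromy forces the VHS property. This is false: Simpson's $\mathbb{C}^*$-fixed-point criterion characterizes local systems underlying $\mathbb{C}$-VHS, but the implication ``monodromy in a real form $\Rightarrow$ $\mathbb{C}^*$-fixed'' does not hold. A generic irreducible rank-two representation into $\SL_2(\mathbb{R})$ with Zariski-dense image does \emph{not} underlie a complex VHS; only special ones (e.g.~Fuchsian ones at the extremes of the Toledo/Euler invariant) do. So the step from ``preserves a flat Hermitian form of signature $(1,1)$'' to ``underlies a polarizable $\mathbb{C}$-VHS of type $(1,1)$'' cannot be made by Hodge theory alone.

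What is missing is precisely the input the paper uses and that your proof never touches: $\mathbb{V}'$ is itself MCG-finite (coalescences of MCG-finite tuples are MCG-finite), it lives on a projective line with $n-1\geq 7$ non-scalar punctures so Diarra's theorem excludes the ``pullback'' possibility, and then Corlette--Simpson (via the spreading-out of MCG-finite local systems to a versal family) shows $\mathbb{V}'$ underlies a polarizable $\mathbb{C}$-VHS. Only \emph{after} this does the Hermitian form enter: since $\mathbb{V}'$ is irreducible, its invariant Hermitian form is unique up to scalar, so the polarization must agree (up to scale) with the inherited signature-$(1,1)$ form, whence the VHS is of type $(1,1)$ rather than unitary. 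This also explains the role of the hypothesis $n\geq 8$, which your argument never uses---a strong hint that something arithmetic/motivic, beyond pure Hodge theory on the fiber, is required.
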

 \begin{proof}

Note that $\mb{V}'$ cannot come from the pullback construction by \cite[Th\'eor\`eme 5.1]{diarra}. If it has Zariski-dense monodromy,  Corlette-Simpson implies that $\mb{V}'$ underlies a polarizable $\mb{C}$-VHS. On the other hand, the fiber $\mb{V}_x$ carries a Hermitian form of signature $(1,1)$ invariant under the group generated by $A_1, \cdots,  A_iA_{i+1}, \cdots, A_n$, inherited from the Hermitian form on $\mb{V}$, and since $\mb{V}'$ is assumed irreducible, there is a unique invariant form up to scaling. Therefore $\mb{V}'$ must also underly a polarizable $\mb{C}$-VHS of type $(1,1)$, as claimed.

On the other hand if $\mathbb{V}'$ does not have Zariski-dense monodromy, by the classification of algebraic subgroups (up to conjugation) of $\SL_2$ (see e.g. \cite[Theorem 4.29]{vanderputsinger}), since we are assuming $\mb{V}'$ is irreducible, the image of monodromy is either contained in $D_{\infty}$, or a finite irreducible subgroup  $G\subset \SL_2(\mb{C})$. The latter cannot happen since it would imply that $G$ has an invariant hermitian form of signature $(1,1)$, contradiction. Therefore the monodromy of $\mb{V}'$ is contained in $D_{\infty}$, as required.
 \end{proof}

\begin{lemma}\label{mitchell}
The group $G=W(K_6)$ does not arise as  the output of Katz's algorithm, applied to $\mathbb{V}$ satisfying	the hypotheses of \autoref{thm:main-technical-thm}.
\end{lemma}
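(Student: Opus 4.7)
The plan is to argue by contradiction. Suppose Katz's algorithm applied to $\mathbb{V}$ yields a local system $\mathbb{U}$ with monodromy group $W(K_6)$. By Proposition \ref{prop:katz-output}, $\rk\mathbb{U} = |D'|-2 = 6$, so $|D'|=8$; after a rank one twist we may assume $D=D'$, so that $\mathbb{V}$ has non-scalar monodromy at all $8$ punctures. Lemma \ref{N4etc} lets us further assume that every local monodromy of $\mathbb{V}$ has finite order.

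First I would pin down the scalar $\lambda$ (the monodromy of $\chi$ about $0$). As in the proof of Lemma \ref{lemma:h4argument}, since every pseudoreflection in $W(K_6)$ has order $2$ and the center of $W(K_6)$ is $\mu_6$, the monodromy $\lambda^{-1}R$ of $\mathbb{U}$ at $\infty$ forces $\lambda^{-1}\in\mu_6$. The case $\lambda=-1$ is excluded by Proposition \ref{prop:local-monodromies}(1): it would force the local monodromy of $\mathbb{V}''$ at each finite puncture to have eigenvalues $(1,1)$, contradicting the non-scalar condition guaranteed by step (1) of Katz's algorithm. Hence $\lambda$ is a primitive third or sixth root of unity, and Proposition \ref{prop:local-monodromies} gives that the local monodromies of $\mathbb{V}''$ are $(1,-\lambda^{-1})$ at each finite puncture and $(\lambda,-1)$ at $\infty$.

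The core of the argument uses Lemma \ref{lemma:coalescehodge}, which applies since $n=|D|=8$ and $\mathbb{V}$ is an irreducible MCG-finite type $(1,1)$ VHS. Any irreducible coalescence $\mathbb{V}'$ of $\mathbb{V}$ (on a generic $7$-punctured $\mathbb{P}^1$) is either an irreducible type $(1,1)$ $\mathbb{C}$-VHS or has monodromy contained in the infinite dihedral group $D_\infty$. A preliminary step is to show that because the Zariski closure of $\mathbb{V}$'s monodromy is all of $\mathrm{SL}_2(\mathbb{C})$, at least one coalescence $\mathbb{V}'$ is irreducible and non-dihedral (otherwise the matrices $A_iA_{i+1}$ together with the remaining $A_j$ would always lie in a dihedral subgroup, forcing enough of the individual $A_i$ to do so as well and contradicting Zariski-density).

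For such a coalescence $\mathbb{V}'$, applying Katz's algorithm produces $\mathbb{U}'$ of rank $|D''|-2\in\{4,5\}$. Combining the Shephard--Todd classification with Lemma \ref{GmpNgeq5} (excluding $G(m,p,N)$ for $N\geq 5$), Lemma \ref{lemma:h4argument} (excluding $W(J_3^{(4)}),W(H_4),W(K_5)$), and Lemma \ref{N4etc} (excluding $W(J_3^{(5)}),W(N_4),W(O_4)$ whenever the coalesced monodromy has infinite order), one forces $\mathbb{U}'$ to be a classical Weyl group; Corollary \ref{cor:weyl-group} then implies that $\mathbb{V}'$ is pulled back, up to a rank one twist, from the modular curve $X(1)$.

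The final contradiction compares this pullback structure with the explicit orders of the local monodromies of $\mathbb{V}$ computed from Proposition \ref{prop:local-monodromies}: the eigenvalues at each finite puncture are primitive twelfth roots of unity when $\lambda\in\{\omega,\omega^2\}$ and primitive sixth roots when $\lambda\in\{-\omega,-\omega^2\}$, and six of these monodromies are inherited unchanged by $\mathbb{V}'$. But the universal $H^1$ over $X(1)$ has local monodromies of orders $4$ and $6$ at the two elliptic points and unipotent (of infinite order) at the cusp, so its pullback through any finite map has local monodromy of order dividing $4$, $6$, or infinite. The order-$12$ case is immediately incompatible. In the order-$6$ case a ramification count, constrained by the fact that only the coalesced puncture of $\mathbb{V}'$ can carry infinite-order monodromy and hence there is exactly one cusp preimage, forces the degree of the map $\mathbb{P}^1\to X(1)$ to be a multiple of $12$ with a very restrictive preimage pattern over $j=0,1728,\infty$; a trace-field comparison (Corollary \ref{cor:weyl-group} gives $\mathbb{V}'$ rational coefficients, while the explicit form of $\mathbb{V}$'s monodromy forces transcendental constraints on products of generators) delivers the desired contradiction.

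The main obstacle is the bookkeeping required to handle the reducible and dihedral sub-cases of Lemma \ref{lemma:coalescehodge} uniformly across Galois conjugates, and to verify that the hypotheses of Corollary \ref{cor:weyl-group} do transfer to the coalescence $\mathbb{V}'$ (in particular that no Galois conjugate of $\mathbb{V}'$ becomes unitary after coalescing, which one checks by applying Lemma \ref{lemma:coalescehodge} to each Galois conjugate of $\mathbb{V}$ in turn).
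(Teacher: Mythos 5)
Your proposal shares with the paper the key idea of coalescing punctures and invoking Lemma~\ref{lemma:coalescehodge}, but the logical structure diverges at a critical point and the student's version does not close the argument.

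The central gap is the ``preliminary step'' asserting that \emph{at least one} coalescence of $\mathbb{V}$ is irreducible and non-dihedral. This is in fact the \emph{opposite} of what the paper establishes. The paper's proof shows, as its first claim, that \emph{no} coalescence $\mathbb{V}'$ of $\mathbb{V}$ can remain irreducible (the $D_\infty$ case is ruled out via the sign homomorphism combined with the eigenvalue constraints coming from $\chi$ having monodromy a sixth root of unity; the irreducible type-$(1,1)$ VHS case is ruled out by running Katz's algorithm on $\mathbb{V}'$ to produce a rank $5$ complex reflection group, excluded by Lemma~\ref{GmpNgeq5} and Lemma~\ref{lemma:h4argument}). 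The heart of the proof then lies in deriving a contradiction from the fact that \emph{every} coalescence is reducible, which the paper does using the Cousin--Moussard classification of MCG-finite representations into $\mathrm{Aff}(\mathbb{C})$ \cite[Theorems 2.3.3, 2.3.4]{cousinmoussard}: casework on whether coalesced matrices $A_jA_{j+1}$ are regular semisimple eventually forces too many $A_i$ to be simultaneously diagonal, contradicting irreducibility of $\mathbb{V}$. Your hand-wave that ``the matrices $A_iA_{i+1}$ together with the remaining $A_j$ would always lie in a dihedral subgroup, forcing enough of the individual $A_i$ to do so as well'' does not hold: different coalescences can be reducible relative to different common eigenvectors, and reducibility of a coalescence says nothing directly about the individual $A_i$. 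The ensuing analysis in your proposal (rank $4$ or $5$ outputs, reduction to classical Weyl groups, the ramification count on $X(1)$) is built on this unsupported premise and never reaches the actual content of the argument.

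There is also a smaller but genuine error early on: you rule out $\lambda = -1$ because Proposition~\ref{prop:local-monodromies} ``would force the local monodromy at each finite puncture to have eigenvalues $(1,1)$, contradicting the non-scalar condition.'' Eigenvalues $(1,1)$ do not contradict non-scalar monodromy --- the Jordan block $J(1,2)$ is unipotent and non-scalar, and this is exactly what $\lambda = -1$ produces. The correct route (which the paper uses) is that $\lambda = -1$ forces all finite local monodromies of $\mathbb{V}$ to be unipotent $J(1,2)$, so $\mathbb{V}$ and all its Galois conjugates have monodromy in $\SL_2(\mathbb{R})$, hence totally real trace field, contradicting the trace field $\mathbb{Q}(\omega)$ of $W(K_6)$ --- the same mechanism as in Lemma~\ref{lemma:h4argument}.
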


\begin{proof}
Suppose the local system $\mb{U}$, with monodromy $G$, arises from the Katz algorithm from a MCG-finite, irreducible, $\SL_2$-local system $\mb{V}$, which we may take to be on $\mb{P}^1\setminus \{x_1, \cdots, x_7, \infty\}$, whose local monodromies around the eight punctures are non-scalar; recall that $\mb{V}$ and each of its Galois conjugates are assumed to underly a polarizable $\mb{C}$-VHS of type $(1,1)$, so that  \autoref{thm:main-technical-thm} applies. Note that all local monodromies of $\mb{V}$ must be semisimple, as otherwise the same argument as in \autoref{lemma:h4argument} applies: indeed, since the pseudoreflections in $G$ are all of order 2, the only way to get non-semisimple monodromies is if $\chi$ has monodromy $-1$, where $\chi$ is the character for the middle convolution in the Katz algorithm.

We will say that a $2\times 2$ matrix is \emph{neat} if it is regular semisimple, i.e. semisimple with distinct eigenvalues. Picking a basepoint and loops around $x_1, \cdots, x_7, \infty$, we  write $\mb{V}$ equivalently as a tuple of matrices $(A_1, \cdots, A_7, A_{8})$, each of which is neat.

\begin{claim*}
Coalescing the  punctures $x_j, x_{j+1}$, the obtained  local system $\mb{V}'$ cannot stay irreducible. 
\end{claim*}
\begin{proof}[Proof of claim]
Suppose $\mb{V}'$ is irreducible. By \autoref{lemma:coalescehodge}, either  $\mb{V}'$ has image in $D_{\infty}$, or  it   underlies a VHS of type $(1,1)$. We now rule out the $D_{\infty}$ case. Note that there is the sign homomorphism $\sgn: D_{\infty}\rightarrow \{\pm 1\}$, given by quotienting out the diagonal matrices. We must have $\sgn(A_i)=1$ for $i \neq j, j+1$: indeed, if $\sgn(A_i)=-1$, then $A_i$ has trace zero, and hence has eigenvalues $\pm \sqrt{-1}$. This may be ruled out  by analyzing the local monodromies of $MC_{\chi^{-1}}(\mb{U})$, combined with the fact that  $\chi$ must have monodromy a sixth root of unity. Since $A_1A_2\cdots A_8=\id$, the above implies $\sgn(A_jA_{j+1})=1$, which in turn implies that $\mb{V}'$ is reducible, a contradiction. 

Therefore $\mb{V}'$ and all of its Galois conjugates underlie VHS of type $(1,1)$. We may then apply the Katz algorithm to get as output an irreducible rank 5 complex reflection group; it must then be $G(m,p,5)$ for some $m,p$, or $W(K_5)$. The former is ruled out by \autoref{GmpNgeq5}, and the latter by \autoref{lemma:h4argument}.
\end{proof}

Hence we may assume that any coalescence of two points gives a reducible local system.

 Coalescing the points $x_i, x_{i+1}$, where $i=1, \cdots, 7,$ we obtain the tuple $(A_1, \cdots, A_iA_{i+1}, \cdots, A_{8})$. By the above, this tuple corresponds to  a reducible representation, i.e. a representation into the group of upper triangular matrices $B\subset \SL_2(\mb{C})$. Composing with the natural map $B\rightarrow \mathrm{Aff}(\mb{C})$, the latter being the group of affine transformations of $\mb{C}$,  we obtain  a representation into $\mathrm{Aff}(\mb{C})$.  We now apply the results of \cite{cousinmoussard} which classify  MCG-finite  representations into $\rm{Aff}(\mb{C})$. 
 
\textbf{Case 1}: Suppose for some $j=1, \cdots,7$, $A_jA_{j+1}$ is neat.  Now we apply \cite[Theorem 2.3.4]{cousinmoussard}.  As  all the matrices in the tuple $(A_1, \cdots A_jA_{j+1}, \cdots, A_{\infty})$ are neat,  loc.cit. implies that the only such local systems are on an $n$-punctured $\mb{P}^1$ for $n\leq 6$, or a sum of two characters; since we now have seven punctures, we must be in the latter case. Therefore, in this case,  we may assume that  $A_i$ is diagonal for any $i=1, \cdots, j-1, j+2, \cdots, \infty$, along with $A_jA_{j+1}$; in particular  these matrices commute with one another.
 
 Now we instead coalesce $x_{j+1}, x_{j+2}$ (where $A_{9}=A_1$) for the original tuple $(A_1, \cdots, A_{8})$. If $A_{j+1}A_{j+2}$ is also neat, then the same argument as above shows that $A_j$ commutes with $A_{j-1}$ (with $A_{-1}=A_8$), and hence $A_j$ is also diagonal, and hence so is $A_{j+1}$, implying $\mb{V}$ is reducible, contradiction. If $A_{j+1}A_{j+2}$ is not neat, then \cite[Theorem 2.3.3]{cousinmoussard} implies it is scalar, and hence $A_{j+1}$ is diagonal, again implying $\mb{V}$ reducible, which is a contradiction. 
 
 \textbf{Case 2}: therefore we may assume that $A_iA_{i+1}$
 is not neat, for any choice of $i=1, \cdots,  7$. On the other hand,  \cite[Theorem 2.3.3]{cousinmoussard} then implies that $A_iA_{i+1}=\pm1$ for each such $i$, which in turn implies that $\mb{V}$ is reducible, which is again a contradiction.    
\end{proof}

\subsection{Ruling out local systems on $\mathbb{P}^1\setminus D$, with $|D|$ large}\label{section:D-large}

We may now prove \autoref{gmp5} and \autoref{cor:finite-orbits-7}.

\begin{proof}[Proof of \autoref{gmp5}]
Recall from the statement of \autoref{gmp5} that $D'$ is the number of points at which $\mathbb{V}$ has non-scalar monodromy. By \autoref{prop:katz-output}, the result of applying our variant Katz's algorithm of \autoref{thm:main-technical-thm} to $\mathbb{V}$ is a local system $\mathbb{U}$ of rank $N=|D'|-2$, with monodromy group a finite complex reflection group. We now study the possible monodromy groups of $\mathbb{U}$.

\autoref{GmpNgeq5} rules out the group $G(m,p,N)$  for $N\geq 5$. All classical Weyl groups of rank at least $5$ are handled by the previous sentence and \autoref{rmk:weyl-groups-Gmpn}, except $W(E_6), W(E_7), W(E_8)$. That $W(E_6), W(E_7), W(E_8)$ do not appear follows from \autoref{cor:weyl-group}, 
as the corollary shows that classical Weyl groups give rise to ``pullback representations" with non-scalar monodromy at $|D'|=N+2$ points by \autoref{cor:weyl-group}.  But these do not exist for $|D'|\geq 7$ by \cite[Th\'eor\`eme 5.1]{diarra}.  Similarly, we have ruled out $W(J_3^{(4)}), W(H_4), W(K_5)$ in \autoref{lemma:h4argument} and $W(K_6)$ in \autoref{mitchell}.

The second claim now follows from the Shephard-Todd classification; we have ruled out all finite complex reflection groups of rank $N\geq 5$, and hence all motivic local systems as in the statement with non-scalar local monodromy at $|D'|=N+2\geq 7$ points.
\end{proof}

\begin{proof}[Proof of \autoref{cor:finite-orbits-7}]
This is immediate by combining \autoref{gmp5} with \cite[Th\'eor\`eme 5.1]{diarra}. Diarra's result rules out finite orbits of ``pullback type," and ours rules out local systems of geometric origin. As discussed in the introduction and in the proof of \autoref{cor:main-classification}, the results of \cite[Theorem 2]{corlette-simpson} and \cite[Theorem A]{loray-etc} show that these are the only two possibilities, completing the proof.
\end{proof}

\subsection{Examples}\label{section:examples}
\subsubsection{Dihedral groups}
As explained in \cite{lamlitt}, there is an infinite collection of finite orbits of the $\on{Mod}_{0,4}$-action on $X_4$ arising via middle convolution from rank $2$ local systems with dihedral monodromy. We    now give a brief explanation of this.  

For any $\lambda\in \mb{C}$ different from $0, 1$, let $X$ denote $\mb{P}^1\setminus \{0, 1, \lambda, \infty\}$, and let $\pi: E\rightarrow \mb{P}^1$ denote the double cover branched at $0, 1, \lambda, \infty$. Denote by $E^{\circ}$ the curve $\pi^{-1}(X)$, so that we have the \'etale map $\pi^{\circ}: E^{\circ} \rightarrow X$.

In loc.cit it was shown (see \cite[Proposition 4.2.2]{lamlitt}) that each irreducible  rank two local system $\mb{V}$ on $X$ of geometric origin and  satisfying a local monodromy condition $(\star)$ (see \cite[p.2]{lamlitt}) is of the form $MC_{\chi}(\pi^{\circ}_*\mb{L}|_{E^{\circ}})$; here $\mb{L}$ is a  torsion rank one local system on $E$, and  $\chi$ has monodromy $-1$. Note that $\pi^{\circ}_*\mb{L}|_{E^{\circ}}$ has monodromy a dihedral group; in other words $\mb{V}$ arises as the middle convolution of a local system with monodromy  a complex reflection group. An explicit description of the monodromy representation on these $\mathbb{V}$ is given in \cite[Example 1.1.7]{lamlitt}.

\subsubsection{$W(H_3)$}\label{h3}
The  group $W(H_3)$ is the group of symmetries of the icosahedron, so that $W(H_3)\simeq \mb{Z}/2\times A_5$, with the representation realizing it as a reflection group being either of the two (Galois-conjugate) rank 3 irreducible representations with non-trivial central character. The trace field of $W(H_3)$ is $\mb{Q}(\sqrt{5})$. 
\begin{enumerate}
\item The center of $W(H_3)$ is given by the $\mb{Z}/2$ factor, which acts by $-1$ on the corresponding three dimensional representation; we denote a generator of the $\mb{Z}/2$ by $-1$.
\item The pseudoreflections are precisely the products of $-1$ with the elements of $A_5$ which are products of two disjoint transpositions (i.e.~the conjugacy class of $(12)(34)$), so that there are 15 of such. \end{enumerate} 
Combining these, we see that each appearance of $W(H_3)$ as an output of Katz's algorithm in \autoref{thm:main-technical-thm}  corresponds to five elements $\sigma_1, \cdots ,\sigma_5\in A_5$ as in point (2) above such that $\sigma_1\cdots \sigma_5=e$. For example, we  can take the tuple $(\sigma_1, \sigma_2, \sigma_3, \sigma_4, \sigma_5)$ to be 
\[
((12)(34), (34)(51), (51)(23), (23)(45), (45)(12)).
\]

This produces, via middle convolution, a rank two  local system $\mb{V}$ with non-semisimple monodromy at all punctures. Since the trace field $\mb{Q}(\sqrt{5})$ is quadratic over $\mb{Q}$, $\mb{V}$ arises in the cohomology of  a family of abelian surfaces with real multiplication by $\mb{Q}(\sqrt{5})$ over a generic 5-punctured $\mb{P}^1$; moreover, this family has  (potentially) totally degenerate reduction at each puncture by Grothendieck's semistable reduction criterion. 

\subsubsection{$G(m,p,3)$}\label{gmp3}
Suppose the elements
\[
s_1=\begin{pmatrix}
 	   & a &  \\
 a^{-1} &    &   \\
 	   &    & 1
\end{pmatrix}, 
s_2=\begin{pmatrix}
 	   & b &  \\
 b^{-1} &    &   \\
 	   &    & 1
\end{pmatrix},
s_3=\begin{pmatrix}
 	1   &  &  \\
 	     &    & c  \\
 	     &   c^{-1} & 
\end{pmatrix},
s_4=\begin{pmatrix}
 	1   &  &  \\
 	     &    & d  \\
 	     &   d^{-1} & 
\end{pmatrix},
\]
of $G(m,p,3)$ generate the group and satisfy
\[
s_1s_2s_3s_4=\lambda^{-1} R
\]
with $R$ a pseudoreflection and $\lambda\neq 1$. Then the tuple $(s_1, s_2, s_3, s_4, \lambda R)$ corresponds to a local system on a 5-punctured $\mb{P}^1$, with monodromy $G(m,p,3)$; applying $MC_{\chi}$, with $\chi$ having monodromy $\lambda$, we obtain a rank two, irreducible, MCG-finite local system. It is straightforward to find examples of such elements $s_1, \cdots , s_4$; for example, any choice of roots of unity $a,b,c,d$ such that $a/b=d/c\neq bc/ad$ suffices.
\subsubsection{$W(D_4)$}\label{d4}
The elements \[
s_1=\begin{pmatrix}
     & -1 & & \\
 -1 &     & & \\
     &     & 1 & \\
     &     & & 1
\end{pmatrix}, 
s_2=\begin{pmatrix}
     & 1 & & \\
 1 &     & & \\
     &     & 1 & \\
     &     & & 1
\end{pmatrix},
s_3=\begin{pmatrix}
    1 &  & & \\
  &  1   & & \\
     &     &  & -1 \\
     &     & -1 & 
\end{pmatrix},
\]
\[
s_4=\begin{pmatrix}
    1 &  & & \\
  &    1  & & \\
     &     &  & 1 \\
     &     & 1 & 
\end{pmatrix},
s_5=\begin{pmatrix}
     &  & -1 & \\
  &   1  & & \\
 -1    &     &  & \\
     &     & & 1
\end{pmatrix},
s_6=\begin{pmatrix}
     &  & 1 & \\
  &   -1  & & \\
 1    &     &  & \\
     &     & & -1
\end{pmatrix},
\]
of $W(D_4)=G(2,2, 4)$
satisfy $s_1\cdots s_6=1$, and $-s_6$ is a pseudoreflection. The tuple $(s_1, \cdots, s_6)$ corresponds to a rank 4 local system on a 6-punctured $\mb{P}^1$ with monodromy $W(D_4)$; applying $MC_{\chi}$ where $\chi$ has monodromy $-1$, we obtain a rank two, irreducible, MCG-finite local system on the projective line minus six points, with non-semisimple monodromy at all $6$ points. This shows that \autoref{gmp5} and \autoref{cor:finite-orbits-7} are sharp.

\subsection{Table} We summarize some data regarding the finite complex reflection groups which may arise from middle convolution as above:
\begin{center}
\begin{tabular}{ |c|c| c|} 
 \hline
 Complex reflection group & Arising from Katz algorithm? & Geometric origin of $\SL_2$-local system \\
 \hline
 Dihedral groups & \cite{lamlitt} & AVs with RM by $\mb{Q}(\zeta+\zeta^{-1})$\\

 other rank 2 & see \cite{lisovyy-tykhyy} & many cases \\
 $G(m,p,3)$ & \autoref{gmp3}  &  many cases\\ 

 $G(m,p,N)$, $N\geq 5$ & None exists: \autoref{gmp5}  &  --\\
 	$W(H_3)$ 	& \autoref{h3} & AVs with RM by $\mb{Q}(\sqrt{5})$\\
	$W(J_3^{(4)}), W(H_4), W(K_5)$ & None exists: \autoref{lemma:h4argument} & -- \\
	$W(K_6)$ & None exists: \autoref{mitchell} & -- \\
	$W(D_4)$ & \autoref{d4} & Elliptic curves \\
	Classical Weyl groups & Pullback families &  Elliptic curves (by \autoref{cor:weyl-group})\\
 \hline
\end{tabular}
\end{center}
\begin{remark}
We leave open the  analysis for the cases $G(m,p,4), W(L_3), W(M_3), W(J_3^{(5)}), W(L_4)$,  $W(N_4), W(O_4), W(F_4)$, except for the special cases considered in \autoref{N4etc}, where $\mathbb{V}$ has a point with local monodromy of infinite order.
\end{remark}
\bibliographystyle{alpha}
\bibliography{bibliography-garnier}

\end{document}